\newtheorem{theorem}{Theorem}[section]
\newtheorem{maintheorem}{Theorem}
\newtheorem{proposition}[theorem]{Proposition}
\newtheorem{lemma}[theorem]{Lemma}
\theoremstyle{definition}
\newtheorem{definition}[theorem]{Definition}
\newtheorem*{acknowledgement}{Acknowledgements}
\newtheorem{example}[theorem]{Example}
\newtheorem{remark*}[theorem]{}
\theoremstyle{remark}
\newtheorem{remark}[theorem]{Remark}
\newcommand{\C}{\mathbb C}
\newcommand{\N}{\mathbb N}
\newcommand{\tr}{\mathrm{tr}}
\DeclareMathOperator{\Hom}{Hom}
\newcommand{\mc}[1]{\mathcal{#1}}
\begin{document}

\title{Quantum invariant families of matrices in free probability}
\author[Stephen Curran]{Stephen Curran$^{(\dagger)}$}\thanks{$\dagger$ Research partially supported by an NSF postdoctoral fellowship}
\address{S.C.: Department of Mathematics, UCLA, Los Angeles, CA 90095, USA.}
\email{\href{mailto:curransr@math.ucla.edu}{\tt{curransr@math.ucla.edu}}}
\author[Roland Speicher]{Roland Speicher$^{(\ddagger)}$}\thanks{$\ddagger$ Research supported by a Discovery grant from NSERC}
\address{R.S.: Department of Mathematics and Statistics, Queen's University, Jeffery Hall, $\;$ Kingston, Ontario K7L 3N6, Canada,
{\rm and} Saarland University, FR 6.1 - Mathematik, Campus E 2.4, 66123 Saarbrucken, Germany.}
\email{\href{mailto:speicher@mast.queensu.ca}{\tt{speicher@mast.queensu.ca}} {\rm and} \href{mailto:speicher@math.uni-sb.de}{speicher@math.uni-sb.de}}
\subjclass[2010]{46L54 (46L65, 60G09)}
\keywords{Free probability, free quantum group, quantum invariance, R-cyclic matrix}
\begin{abstract}
We consider (self-adjoint) families of infinite matrices of noncommutative random variables such that the joint distribution of their entries is invariant under conjugation by a free quantum group.  For the free orthogonal and hyperoctahedral groups, we obtain complete characterizations of the invariant families in terms of an operator-valued $R$-cyclicity condition.  This is a surprising contrast with the Aldous-Hoover characterization of jointly exchangeable arrays.  
\end{abstract}

\maketitle

\section{Introduction}

A sequence $(X_1,X_2,\dotsc)$ of random variables is called \textit{exchangeable} (resp. \textit{rotatable}) if for each $n \in \N$ the joint distribution of $(X_1,\dotsc,X_n)$ is invariant under permutations (resp. orthogonal transformations).  De Finetti's celebrated theorem characterizes infinite exchangeable sequences as mixtures of i.i.d. sequences.  Likewise Freedman has characterized infinite rotatable sequences as mixtures of i.i.d. centered Gaussian sequences \cite{fre1}. 

Consider now an infinite symmetric matrix of random variables $(X_{ij})_{i,j \in \N}$, $X_{ij} = X_{ji}$.  Such a matrix is called \textit{jointly exchangeable} if $(X_{ij})_{i,j \in \N}$ has the same joint distribution as $(X_{\pi(i)\pi(j)})_{i,j \in \N}$ for any finite permutation $\pi$.  Equivalently, for each $n \in \N$ the joint distribution of the entries of $X_n = (X_{ij})_{1 \leq i,j \leq n}$ and $UX_nU^t$ agree for any $n \times n$ permutation matrix $U$.  There are two obvious examples of jointly exchangeable matrices: $(X_{ij})_{i \leq j}$ i.i.d. with $X_{ij} = X_{ji}$, and $X_{ij} = f(Y_i,Y_j)$ where $(Y_i)_{i \in \N}$ are i.i.d. and $f$ is symmetric in its arguments.  Further examples can be constructed from these, the most general being
\begin{equation*}
 X_{ij} = f(\alpha,\xi_i,\xi_j,\lambda_{ij})
\end{equation*}
where $\alpha$, $(\xi_i)_{i \in \N}$ and $(\lambda_{ij})_{i \leq j}$ are mutually independent and distributed uniformly on $[-1,1]$, $\lambda_{ij} = \lambda_{ji}$ and $f(a,\cdot,\cdot,d)$ is symmetric in its arguments for any fixed $a,d$.  A well known theorem of Aldous \cite{ald}, \cite{ald1} and Hoover \cite{hoover} states that any jointly exchangeable matrix can be represented in this way.  This result has recently reappeared in the contexts of limits of dense graphs \cite{dj}, classification of metric spaces with probability measures \cite{ver}, and hereditary properties of hypergraphs \cite{aus},\cite{aut}.  See the recent surveys by Aldous \cite{ald2},\cite{ald3} for further discussion and applications.  Likewise the \textit{jointly rotatable} matrices can be characterized as certain mixtures of Gaussian processes, see Kallenberg's text \cite{kal} for a thorough treatment of these and related results.

In \cite{ksp}, C. K\"{o}stler and the second author discovered that de Finetti's theorem has a natural analogue in free probability: an infinite sequence $(x_i)_{i \in \N}$ of noncommutative random variables is freely independent and identically distributed (with amalgamation over its tail algebra) if and only if for each $n \in \N$ the joint distribution of $(x_1,\dotsc,x_n)$ is ``invariant under quantum permutations''.  Here quantum permutation refers to Wang's \textit{free permutation group} $S_n^+$ \cite{wang2}, which is a compact quantum group in the sense of Woronowicz \cite{wor1}.  Likewise Freedman's characterization of rotatable sequences has a natural free analogue obtained by requiring invariance under Wang's \textit{free orthogonal group} $O_n^+$ \cite{wang1}, as shown by the first author in \cite{cur4}.  With T. Banica we have given a unified approach to de Finetti type theorems in the classical and free settings \cite{bcs2}, using the ``easiness'' formalism from \cite{bsp}.  See also \cite{cur3}, \cite{cur5}.

In this paper we consider matrices of noncommutative random variables $X = (x_{ij})_{1 \leq i,j \leq n}$ whose joint distribution is invariant under conjugation by $S_n^+,O_n^+,H_n^+$ or $B_n^+$, where $H_n^+$ is the \textit{free hyperoctahedral group} \cite{bbc2} and $B_n^+$ is the \textit{free bistochastic group} \cite{bsp}.  Given the analogy with the results of de Finetti and Freedman for sequences which are invariant under a free quantum group, one might expect to find a direct parallel with the Aldous-Hoover characterization.  However, the situation  is in fact quite different.  For example, matrices $X = (x_{ij})_{1 \leq i,j \leq n}$ with $(x_{ij})_{i \leq j}$ freely independent and identically distributed, and $x_{ji} = x_{ij}$, are not necessarily invariant under conjugation by $S_n^+$ (see Section \ref{sec:conclusion}).  Nevertheless, for $O_n^+$ and $H_n^+$ we are still able to obtain complete characterizations in terms of an operator-valued version of the $R$-cyclicity condition from \cite{nss2}.  Moreover, these characterizations extend naturally to invariant families of matrices $X_1,\dotsc,X_s$.  A surprising feature of these results is that they are ``matricial'' in nature, whereas the Aldous-Hoover characterization is often expressed as a statement about \textit{arrays}.

In the orthogonal case our main result is as follows (see Sections \ref{sec:prelim} and \ref{sec:rcyclic} for definitions and background):

\begin{maintheorem}\label{orthogonalcase}
Let $X_1,\dotsc,X_s$ be a family of infinite matrices, $X_r = (x_{ij}^{(r)})_{i,j \in \N}$, with entries in a W$^*$-probability space $(M,\varphi)$.  Assume that $M$ is generated as a von Neumann algebra by $\{x_{ij}^{(r)}:i,j \in \N, 1 \leq r \leq s\}$.  Assume moreover that the family is self-adjoint, in the sense that whenever $X$ is in the family, so is $X^*$.  Then the following conditions are equivalent:
\begin{enumerate}
 \item For each $n \in \N$, the joint distribution of the entries of $X_1,\dotsc,X_s$ is invariant under conjugation by $O_n^+$.
 \item There is a W$^*$-subalgebra $1 \in \mc B \subset M$ and a $\varphi$-preserving conditional expectation $E:M \to B$ such that the family $X_1,\dotsc,X_s$ is uniformly $R$-cyclic with respect to $E$.
\item There is a W$^*$-subalgebra $1 \in \mc B \subset M$ and a $\varphi$-preserving conditional expectation $E:M \to \mc B$, such that for each $n \in \N$, setting $X_r^{(n)} = (x_{ij}^{(r)})_{1 \leq i,j \leq n}$, we have $\{X_1^{(n)},\dotsc,X_r^{(n)}\} \subset M_n(M)$ is freely independent from $M_n(\mc B)$ with amalgamation over $\mc B$.
\end{enumerate}
\end{maintheorem}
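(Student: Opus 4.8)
The plan is to establish the equivalence $(2) \Leftrightarrow (3)$ as an operator-valued refinement of the Nica--Shlyakhtenko--Speicher characterization of $R$-cyclic matrices from \cite{nss2}, and then to close the loop with the two genuinely quantum-probabilistic implications $(1) \Rightarrow (2)$ and $(2) \Rightarrow (1)$. Throughout, invariance under conjugation by $O_n^+$ is encoded by the coaction $x_{ij}^{(r)} \mapsto \sum_{c,d} u_{ic} u_{jd} \otimes x_{cd}^{(r)}$, where $u = (u_{ij})$ is the fundamental corepresentation with self-adjoint entries subject to the orthogonality relations $\sum_c u_{ic} u_{jc} = \delta_{ij} = \sum_c u_{ci} u_{cj}$.

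For $(2) \Leftrightarrow (3)$ I would not argue from scratch. The content is that a family $X_r^{(n)} \in M_n(M)$ is uniformly $R$-cyclic with respect to $E : M \to \mc B$ precisely when, regarding $M_n(M)$ as a $\mc B$-valued probability space via the composite $F = \tr_n \circ (\mathrm{id} \otimes E)$, the family $\{X_1^{(n)},\dotsc,X_s^{(n)}\}$ is free from $M_n(\mc B)$ with amalgamation over $\mc B$. This is the amalgamated analogue of the classical statement that $R$-cyclic families are exactly those free from the scalar matrices, and the proof is a cumulant computation: the vanishing of all mixed $\mc B$-valued cumulants having one leg in $M_n(\mc B)$ translates exactly into the cyclic index condition on the cumulants of the entries. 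I expect to have recorded this equivalence, together with the relevant cumulant machinery, in Section \ref{sec:rcyclic}.

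The heart of the matter is $(1) \Rightarrow (2)$, which I would prove via the Weingarten calculus for $O_n^+$. Invariance forces every mixed moment $\varphi(x_{i_1 j_1}^{(r_1)} \cdots x_{i_k j_k}^{(r_k)})$ to equal the Haar-average over $O_n^+$ of the corresponding moment of the conjugated entries, and the Haar integrals of products of the $u_{ij}$ are given by the Weingarten function supported on non-crossing pair partitions $NC_2$. The strategy is then threefold: (i) construct the amalgamation algebra $\mc B$ and the $\varphi$-preserving conditional expectation $E$, the natural candidate being a tail/invariant algebra with $E$ obtained as a limit of conditional expectations, exactly as in the free de Finetti and rotatability theorems \cite{ksp},\cite{cur4}; (ii) let $n \to \infty$ and use the asymptotics of the Weingarten function to show that the moments, and hence the operator-valued cumulants $\kappa_E$, concentrate on pairings whose blocks connect entries in a cyclically matching index pattern; (iii) read off that this is precisely the uniform $R$-cyclicity condition. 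I expect this to be the main obstacle: identifying the correct $\mc B$ and verifying that $E$ is $\varphi$-preserving and closes the cumulant identities requires controlling the $n \to \infty$ Weingarten sums uniformly enough to extract the sharp cumulant conditions from the moment conditions, and this reconstruction of cumulants from moments is the technical core of the argument.

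For the converse $(2) \Rightarrow (1)$ I would argue directly on the level of operator-valued cumulants. If the family is uniformly $R$-cyclic with respect to $E$, then the entries of the conjugated matrix $u X^{(n)} u^*$ have the same $\mc B$-valued cumulants as those of $X^{(n)}$: substituting the $R$-cyclic cumulant expression into the moment--cumulant formula for the conjugated entries produces sums of the $u_{ij}$ that are telescoped by the orthogonality relations $\sum_c u_{ic} u_{jc} = \delta_{ij}$, leaving exactly the original cumulants, with the cyclic index structure providing precisely the matching of legs needed for these cancellations to occur. Since the joint distribution of the entries is determined by these cumulants, invariance under conjugation by $O_n^+$ follows for every $n$, completing the cycle.
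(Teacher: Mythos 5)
Your proposal is correct and follows essentially the same route as the paper: $(2)\Leftrightarrow(3)$ via the operator-valued extension of the Nica--Shlyakhtenko--Speicher cumulant characterization developed in Section \ref{sec:rcyclic}, and $(1)\Rightarrow(2)$ via the invariant subalgebra $\mc B$ with its reversed-martingale conditional expectation together with the Weingarten asymptotics of Theorem \ref{West}, which collapse the cumulant formula onto the unique $\sigma\in NC_2(2k)$ with $\sigma\vee\hat 0_k=1_{2k}$, namely $\widetilde 1_k$. For $(2)\Rightarrow(1)$ your telescoping of the orthogonality relations along innermost pairs of a noncrossing pairing is just the hands-on verification that $T_\pi\in Fix(u^{\otimes 2k})$ for $\pi\in NC_2(2k)$, which is exactly how the paper argues via Theorem \ref{easyinv}.
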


The equivalence of (2) and (3) is well known in the case $\mc B = \C$, see e.g. \cite{ns}.  We will prove this for general $\mc B$ in Section \ref{sec:rcyclic}.  One feature of operator-valued uniformly $R$-cyclic families $X_1,\dotsc,X_s$ is that the (operator-valued) joint distribution of their entries is completely determined by that of $(x_{11}^{(1)},\dotsc,x_{11}^{(s)})$.  It is therefore natural to wonder what distributions may arise in this way.  We will show that these are exactly the operator-valued distributions which are freely infinitely divisible (known in the case $\mc B = \C$, see e.g. \cite{ns}).
\begin{maintheorem}\label{infdivis}
Let $\mc A$ be a unital C$^*$-algebra, $1 \in \mc B \subset \mc A$ a C$^*$-subalgebra and $E:\mc A \to \mc B$ a faithful, completely positive conditional expectation.  Let $y_1,\dotsc,y_s \in \mc A$, then the following conditions are equivalent:
\begin{enumerate}
 \item There is a unital C$^*$-algebra $\mc A'$, a unital inclusion $\mc B \hookrightarrow \mc A'$, a faithful completely positive conditional expectation $E':\mc A' \to \mc B$, and a family $\{x_{ij}^{(r)}:i,j \in \N, 1 \leq r \leq s\} \subset \mc A'$ such that
\begin{itemize}
\item $(x_{11}^{(1)},\dotsc,x_{11}^{(s)})$ has the same $\mc B$-valued distribution as $(y_1,\dotsc,y_s)$.
\item $X_1,\dotsc,X_s$ forms a $\mc B$-valued uniformly $R$-cyclic family, where $X_r = (x_{ij}^{(r)})_{i,j \in \N}$ for $1 \leq r \leq s$.
\end{itemize}
\item The $\mc B$-valued joint distribution of $(y_1,\dotsc,y_s)$ is freely infinitely divisible, i.e. for each $n \in \N$ there exists a unital C$^*$-algebra $\mc A_n$, a unital inclusion $\mc B \hookrightarrow \mc A_n$, a faithful completely positive conditional expectation $E_n:\mc A_n \to \mc B$, and a family $\{y_r^{(i)}: 1 \leq i \leq n, 1 \leq r \leq s\}$ such that 
\begin{itemize}
\item The families $\{y_1^{(1)},\dotsc,y_s^{(1)}\},\dotsc,\{y_{1}^{(n)},\dotsc,y_s^{(n)}\}$ are freely independent with respect to $E_n$.
\item The $\mc B$-valued joint distribution of $(y_1^{(i)},\dotsc,y_s^{(i)})$ does not depend on $1 \leq i \leq n$.
\item $(y_1,\dotsc,y_s)$ has the same $\mc B$-valued distribution as $(y'_1,\dotsc,y'_s)$, where $y'_r = y_r^{(1)} + \dotsb + y_r^{(n)}$ for $1 \leq r \leq s$.
\end{itemize}
\end{enumerate}
  
\end{maintheorem}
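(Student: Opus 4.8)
The plan is to prove the two implications by passing between the $\mc B$-valued joint distribution $\mu$ of the diagonal family $(y_1,\dots,y_s)=(x_{11}^{(1)},\dots,x_{11}^{(s)})$ and the operator-valued distribution of all the entries, exploiting that for a uniformly $R$-cyclic family the only nonvanishing $\mc B$-valued free cumulants of the entries are the cyclic ones $\kappa_m^{E}(x_{i_1i_2}^{(r_1)}\beta_1,\dots,x_{i_mi_1}^{(r_m)}\beta_m)$, and that by uniformity these do not depend on the indices $i_1,\dots,i_m$. In particular the cyclic cumulants coincide with the $\mc B$-valued free cumulants of $\mu$, so $\mu$ determines the whole entrywise distribution. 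Throughout I will use the equivalence of (2) and (3) in Theorem \ref{orthogonalcase} to trade $R$-cyclicity of $X_1,\dots,X_s$ for freeness of the truncations $X_r^{(N)}=(x_{ij}^{(r)})_{1\le i,j\le N}$ from $M_N(\mc B)$ over $\mc B$.

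For $(1)\Rightarrow(2)$ the key device is the normalized partial trace. Regard the truncations inside $M_N(\mc A')$ and let $E_N\colon M_N(\mc A')\to\mc B$ be given by $E_N(a)=\tfrac1N\sum_{i=1}^N E'(a_{ii})$, the composition of the entrywise $E'$ with the normalized trace; it is a faithful completely positive conditional expectation onto the diagonal copy $\mc B\cong 1_N\otimes\mc B$. Consider the rescaled matrices $Z_r^{(N)}=\tfrac1N X_r^{(N)}\in M_N(\mc A')$ and compute their $\mc B$-valued free cumulants with respect to $E_N$. Expanding through the moment--cumulant relations and invoking uniform $R$-cyclicity, each surviving (connected) term forces the $m$ matrix indices to chain around a single cycle; the $N^{m}$ such index choices are weighted by the $N^{-m}$ from the rescaling of the $Z_r^{(N)}$ and by the single $N^{-1}$ from the trace in $E_N$, so that (a computation I checked exactly for $m=2$)
\[
\kappa_m^{E_N}\bigl(Z_{r_1}^{(N)}\beta_1,\dots,Z_{r_m}^{(N)}\beta_m\bigr)=\tfrac1N\,\kappa_m^{E}\bigl(y_{r_1}\beta_1,\dots,y_{r_m}\beta_m\bigr).
\]
Thus $(Z_1^{(N)},\dots,Z_s^{(N)})$ has a genuine $\mc B$-valued distribution $\mu_{1/N}$ whose free cumulants are $\tfrac1N$ those of $\mu$. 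Taking $N$ freely independent copies of $\mu_{1/N}$, realized in the amalgamated free product of $N$ copies of $(M_N(\mc A'),E_N)$ over $\mc B$, produces a family whose cumulants are $N\cdot\tfrac1N\kappa^{\mu}=\kappa^{\mu}$, hence distributed as $\mu$. This is exactly the decomposition required in (2), for every $N$.

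For $(2)\Rightarrow(1)$ I reverse this. Infinite divisibility of $\mu$ provides, for each $N$, a genuine $\mc B$-valued distribution $\mu_{1/N}$ with cumulants $\tfrac1N\kappa^{\mu}$, equivalently the $\mc B$-valued free convolution semigroup $(\mu_t)_{t\ge0}$ with cumulants $t\kappa^{\mu}$. The task is to manufacture a single infinite family whose cyclic cumulants are $\kappa^{\mu}$ and whose joint distribution is genuine. I will do this with an explicit positive model: realize the $\mc B$-valued free L\'evy--Khintchine data of $\mu$ on a full Fock module over $\mc B$ built on $\ell^2(\N)\otimes\mc H_0$, and define each entry $x_{ij}^{(r)}$ from $\mc B$-valued creation, gauge and annihilation operators whose kernels are supported on the $i$-th and $j$-th copies of $\mc H_0$. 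On such a module the $\mc B$-valued free cumulants are read off directly from the kernels, so a nonvanishing cumulant of entries forces the indices to chain cyclically (giving $R$-cyclicity), while using identical kernels across the copies makes them index-independent (giving uniformity); restricting to $i=j=1$ returns the Fock realization of $\mu$ itself, so $(x_{11}^{(1)},\dots,x_{11}^{(s)})$ has distribution $\mu$. The ambient C$^*$-algebra $\mc A'$ and the vacuum conditional expectation $E'$ are genuine and faithful by construction, and Theorem \ref{orthogonalcase}(2)$\Leftrightarrow$(3) then guarantees that the truncations are free from $M_N(\mc B)$, as needed.

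The main obstacle is this construction in $(2)\Rightarrow(1)$. The cyclic cumulants $\kappa^{\mu}$ prescribe an enormous operator-valued distribution for all the entries at once, and its positivity is by no means automatic: it holds precisely because $\mu$ is freely infinitely divisible, and making this rigorous requires the operator-valued free L\'evy--Khintchine representation (a completely positive generating map over $\mc B$), which is considerably more delicate than its scalar counterpart. A secondary technical point is the cumulant bookkeeping in $(1)\Rightarrow(2)$: because $E_N$ is a partial trace, the elements $\beta_k\in\mc B$ sit diagonally and do not commute with the entries, so the index counting above must be carried out at the level of $\mc B$-valued noncrossing cumulants rather than scalar ones, and uniform $R$-cyclicity is exactly what keeps that computation tractable and exact at finite $N$.
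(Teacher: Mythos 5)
Your direction $(1)\Rightarrow(2)$ is essentially the paper's argument: the paper also passes to the normalized trace expectation $E_{\mc B}$ on $M_n(\mc A')$, uses the identity $\kappa_E^{(k)}[x_{11}^{(r_1)}b_1,\dotsc,x_{11}^{(r_k)}b_k]=n\cdot\kappa_{E_{\mc B}}^{(k)}[(n^{-1}X_{r_1})b_1,\dotsc,(n^{-1}X_{r_k})b_k]$, and realizes $n$ free copies of the distribution of $(n^{-1}X_1,\dotsc,n^{-1}X_s)$ in a reduced amalgamated free product so that the sum reproduces the cumulants of $(y_1,\dotsc,y_s)$. The one soft spot is that you verify the rescaling identity only for $m=2$ and gesture at an index-counting argument for general $m$; you do not need to redo that combinatorics, since the identity for all $m$ is exactly the content of Proposition \ref{cumulantfactor} once Theorem \ref{uniformrcyclictheorem} has identified uniform $R$-cyclicity with freeness of $\{X_1,\dotsc,X_s\}$ from $M_n(\mc B)$ with amalgamation over $\mc B$. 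With that citation the direction is complete.

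The direction $(2)\Rightarrow(1)$ as written has a genuine gap. You propose to build the infinite family from an operator-valued free L\'evy--Khintchine representation of $\mu$ on a full Fock module over $\mc B$, and you yourself identify the existence and positivity of that representation (a completely positive generating map over general $\mc B$) as the ``main obstacle''; nothing in the proposal actually establishes it, nor do you verify that the kernels can be arranged so that the resulting cumulants are exactly the cyclic, index-independent ones. So the construction that is supposed to produce $\mc A'$, $E'$ and the entries $x_{ij}^{(r)}$ is not carried out. The paper avoids this entirely by a much more elementary two-step route: first, for each fixed $n$, it adjoins a system of matrix units $(e_{ij})$ free from $\{y_1^{(1)},\dotsc,y_s^{(1)}\}$ with amalgamation over $\mc B$ (by taking a reduced free product with $M_n(\mc B)$), compresses by $p=e_{11}$, and sets $x_{ij}^{(r)}=n\,e_{1i}y_r^{(1)}e_{j1}$; the joint distribution computation shows this $n\times n$ family is free from $M_n(\mc B)$ over $\mc B$, hence uniformly $R$-cyclic, with $(x_{11}^{(1)},\dotsc,x_{11}^{(s)})$ distributed as $(y_1,\dotsc,y_s)$ (Theorem \ref{finitedivis}). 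Second, since the $n$-th and $(n+1)$-st finite models have consistent distributions on overlapping indices and the expectations are faithful, there are trace-preserving embeddings $\mc A_n\hookrightarrow\mc A_{n+1}$, and the C$^*$-inductive limit assembles the infinite family. If you want to salvage your approach you must either prove the operator-valued L\'evy--Khintchine representation you invoke, or replace that step with a finite-stage construction plus a compatibility/inductive-limit argument of the above kind; as it stands the existence half of $(2)\Rightarrow(1)$ is asserted rather than proved.
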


For self-adjoint families of infinite matrices which are invariant under conjugation by the free hyperoctahedral group, our main result is as follows:
\begin{maintheorem}\label{hyperoctahedralcase}
Let $X_1,\dotsc,X_s$ be a family of infinite matrices, $X_r = (x_{ij}^{(r)})_{i,j \in \N}$, with entries in a W$^*$-probability space $(M,\varphi)$.  Assume that $M$ is generated as a von Neumann algebra by $\{x_{ij}^{(r)}:i,j \in \N, 1 \leq r \leq s\}$.  Assume moreover that the family is self-adjoint, in the sense that whenever $X$ is in the family, so is $X^*$.  Then the following conditions are equivalent:
\begin{enumerate}
 \item For each $n \in \N$, the joint distribution of the entries of $X_1,\dotsc,X_s$ is invariant under conjugation by $H_n^+$.
 \item There is a W$^*$-subalgebra $1 \in \mc B \subset M$ and a $\varphi$-preserving conditional expectation $E:M \to B$ such that the family $X_1,\dotsc,X_s$ is $R$-cyclic with respect to $E$, and its determining series is invariant under quantum permutations.
\end{enumerate}
\end{maintheorem}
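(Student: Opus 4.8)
The plan is to follow the template already established for the orthogonal case (Theorem~\ref{orthogonalcase}), replacing the Weingarten calculus of $O_n^+$ by that of the free hyperoctahedral group. In the easiness formalism the quantum groups $S_n^+$, $H_n^+$, $O_n^+$ correspond respectively to the categories of all noncrossing partitions $NC$, noncrossing partitions with all blocks of even size $NC_{\mathrm{even}}$, and noncrossing pair partitions $NC_2$; in particular one has the chain of quantum subgroups $S_n^+ \subset H_n^+ \subset O_n^+$, together with the free wreath product description $H_n^+ = \Z_2 \wr_* S_n^+$. This factorization is what organizes the proof: the quantum $\Z_2$-sign part should be responsible for the $R$-cyclicity, while the $S_n^+$-part should be responsible for the quantum-permutation invariance of the determining series. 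As in the sequence and orthogonal theorems, the implication $(2)\Rightarrow(1)$ is a direct verification, and $(1)\Rightarrow(2)$ carries the analytic content.

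For $(1)\Rightarrow(2)$ I would first produce $\mc B$ and $E$ exactly as in the orthogonal case: take $\mc B$ to be the $W^*$-algebra of elements fixed by the coactions and $E:M\to\mc B$ the associated $\varphi$-preserving conditional expectation, whose existence and $\varphi$-invariance follow from the standard tail/shift argument. The core computation is then that of the mixed $\mc B$-valued cumulants of the entries $x_{ij}^{(r)}$. Fixing a moment $\varphi(x_{i_1 j_1}^{(r_1)}\cdots x_{i_k j_k}^{(r_k)})$ and invoking invariance under conjugation $x_{ij}^{(r)}\mapsto\sum_{k,l}u_{ik}x_{kl}^{(r)}u_{jl}$ by the Haar element of $H_n^+$, one rewrites the moment through the $H_n^+$-Weingarten function as a sum indexed by pairs of partitions in $NC_{\mathrm{even}}$, with the row indices $(i_1,\dots,i_k)$ and column indices $(j_1,\dots,j_k)$ coupled by the corresponding Kronecker symbols $\delta_\pi$. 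Passing to the cumulant side and letting $n\to\infty$ so that the off-diagonal Weingarten corrections vanish, the parity constraint coming from the $\Z_2$-signs forces every surviving cumulant to be supported on cyclically interlacing index tuples, i.e. $\kappa[x_{i_1 j_1}^{(r_1)},\dots,x_{i_k j_k}^{(r_k)}]=0$ unless $j_1=i_2,\dots,j_{k-1}=i_k,j_k=i_1$. This is precisely the $R$-cyclicity condition of Section~\ref{sec:rcyclic}.

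The remaining condition, that the determining series be invariant under quantum permutations, I would extract from the inclusion $S_n^+\subset H_n^+$. Since conjugation-invariance under $H_n^+$ passes through the surjection $C(H_n^+)\twoheadrightarrow C(S_n^+)$ sending the generators to a magic unitary, the family is automatically invariant under conjugation by $S_n^+$ for every $n$; translating this $S_n^+$-invariance through the Weingarten calculus for $NC$ yields exactly the statement that the family of cyclic cumulants, viewed as the determining series of the $R$-cyclic family, is fixed by the coaction of $C(S_n^+)$. Here it is essential that, in contrast to the orthogonal case, the blocks of size larger than two in $NC_{\mathrm{even}}$ are \emph{not} collapsed to pairs, so the determining series retains genuine (quantum-permutation-invariant) index dependence rather than being forced to be constant; this is exactly the gap between $R$-cyclicity and uniform $R$-cyclicity. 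For the converse $(2)\Rightarrow(1)$ I would argue directly: assuming $R$-cyclicity with a quantum-permutation-invariant determining series, I compute how the cyclic cumulants transform under $x_{ij}^{(r)}\mapsto\sum_{k,l}u_{ik}x_{kl}^{(r)}u_{jl}$ and check, using the defining relations of $H_n^+$ together with invariance of the determining series, that the full $\mc B$-valued joint distribution of the entries is preserved.

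The main obstacle I anticipate lies in $(1)\Rightarrow(2)$, namely the bookkeeping for noncrossing even partitions with blocks of size strictly greater than two. Unlike the orthogonal case, where only pairings occur and the structure is comparatively rigid, one must control the asymptotics of the $H_n^+$-Weingarten function on all of $NC_{\mathrm{even}}$ and show that the larger blocks contribute precisely the quantum-permutation-invariant part of the determining series---no more, which would over-constrain to uniformity, and no less, which would fail to give $R$-cyclicity. Matching the $\Z_2$-sign structure of $H_n^+$ to the cyclic support, and the $S_n^+$-coaction to the determining series, so that these two features separate cleanly along the free wreath product decomposition, is the delicate point, as are the justification of the $n\to\infty$ limit and the construction of $\mc B$ and $E$ in the infinite-matrix setting.
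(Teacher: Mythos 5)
Your skeleton matches the paper's: both directions do run through the Weingarten/easiness machinery, $\mc B$ and $E$ are constructed exactly as you describe, and $(1)\Rightarrow(2)$ does start from the asymptotic cumulant formula of Theorem \ref{limitcumulants}. But the proposal is missing the one combinatorial idea that actually makes the hyperoctahedral case go through, namely Lemma \ref{h+parts}: every $\pi \in NC_h(2k)$ decomposes \emph{uniquely} as $\widetilde{\pi_1} \vee \widetilde{\pi_2}$ with $\pi_1 \leq \pi_2$ in $NC(k)$, the partial order on $NC_h(2k)$ corresponds to $\sigma_1 \leq \pi_1 \leq \pi_2 \leq \sigma_2$, and the M\"obius function factors as $\mu(\sigma_1,\pi_1)\mu(\pi_2,\sigma_2)$. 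This is what you flag as ``the delicate point'' but do not resolve. It is not mere bookkeeping: in the cumulant formula the constraint $\sigma \vee \hat 0_k = 1_{2k}$ forces $\sigma_2 = 1_k$, whence $\widetilde{1_k} \leq \ker \mathbf j$ gives $R$-cyclicity, and the remaining free parameter $\sigma_1$ ranging over $NC(k)$ with $\widetilde{\sigma_1} \vee \widetilde{1_k} \leq \ker\mathbf j$ is precisely what produces the determining-series condition via the characterization in Lemma \ref{detserinv}. Your appeal to a ``parity constraint coming from the $\Z_2$-signs'' and to the free wreath product $H_n^+ = \Z_2 \wr_* S_n^+$ is a heuristic for why such a separation should exist, but no argument is given that the $\Z_2$-part and the $S_n^+$-part of the Weingarten sum actually decouple; in the paper this decoupling \emph{is} Lemma \ref{h+parts}.

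The second gap is your route to the quantum-permutation invariance of the determining series via the inclusion $S_n^+ \subset H_n^+$. Conjugation-invariance under $S_n^+$ is, by Theorem \ref{easyinv}, a statement about the \emph{scalar moments} of the entries being expressible over $NC(2k)$; the conclusion you need is that the $\mc B$-valued cyclic \emph{cumulants} $\kappa_E^{(k)}[x^{(r_1)}_{i_ki_1}b_1,\dotsc,x^{(r_k)}_{i_{k-1}i_k}b_k]$, with coefficients in the $H^+$-invariant subalgebra $\mc B$ inserted, are invariant under the one-sided coaction $t_j^{(r)} \mapsto \sum_i t_i^{(r)} \otimes u_{ij}$ on the index set. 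These are different kinds of statements (and the $S^+$-invariant subalgebra need not coincide with $\mc B$), so the passage from one to the other is exactly what needs proof; the paper instead reads the condition of Lemma \ref{detserinv} directly off the $H^+$-cumulant formula and then must still verify that the candidate coefficients $c_{\sigma,\mathbf r}$, defined as $n\to\infty$ limits, actually converge --- which is done by M\"obius inversion from the already-established limits. Neither this convergence issue nor the nontrivial content of $(2)\Rightarrow(1)$ (the recursive construction of the $c_{\sigma,\pi,\mathbf r}$ in Proposition \ref{h+finite}, which again uses Lemma \ref{h+parts} to reindex by $NC_h(2k)$) is addressed in the proposal.
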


The $R$-cyclicity condition appearing in (2) is equivalent to freeness of the family $X_1,\dotsc,X_s$ from $M_n(\mc B)$, but with amalgamation now over the algebra of diagonal matrices with entries in $\mc B$.  This is known in the case $\mc B = \C$ from \cite{nss}, we prove this for general $\mc B$ in Section 3.

The situation for $S_n^+$ and $B_n^+$-invariant matrices appears to be much more complicated.  In particular, invariant matrices need not be $R$-cyclic.  For example, constant matrices $x_{ij} = \alpha$ are invariant under conjugation by $B_n^+$ (and hence $S_n^+$), but are not $R$-cyclic if $\alpha \neq 0$.  The contrast between $S_n^+$ and $H_n^+$-invariant matrices is surprising, given the similar characterizations of invariant sequences (see \cite{bcs2}).  Moreover, it follows from the Aldous-Hoover characterization that an infinite symmetric matrix of classical random variables is invariant under conjugation by the hyperoctahedral group if and only if it has a representation of the form $f(\alpha,\xi_i,\xi_j,\lambda_{ij})$, as for jointly exchangeable matrices, with the only additional condition being that $f$ is an odd function of each of its entries.  We will give some partial results for $S_n^+$ and $B_n^+$-invariant families in Section \ref{sec:qinv}, but leave the classification problem open.  We will discuss $S_n^+$-invariant matrices further in Section \ref{sec:conclusion}.

Our paper is organized as follows.  Section \ref{sec:prelim} contains preliminaries, here we recall the basic concepts from free probability.  We also recall some basic notions and results from \cite{bsp} on ``free'' quantum groups.  In Section \ref{sec:rcyclic} we develop the basic theory of operator-valued $R$-cyclic matrices, and prove Theorem \ref{infdivis}.  This generalizes the results from \cite{nss}, and may be of independent interest.  In Section \ref{sec:qinv} we study families of matrices of noncommutative random variables which are invariant under conjugation by a free quantum group.  We give a combinatorial description of invariant families of finite matrices in Theorem \ref{easyinv}.  We then give a general formula for operator-valued moment and cumulant functionals of the entries of a self-adjoint family of infinite matrices which is invariant under conjugation by a free quantum group.  In Sections \ref{sec:o+case} and \ref{sec:h+case}, we further analyze this formula in the free orthogonal and hyperoctahedral cases, and prove Theorems \ref{orthogonalcase} and \ref{hyperoctahedralcase}.  Section \ref{sec:conclusion} contains concluding remarks, including further discussion of $S_n^+$-invariant matrices.

\begin{acknowledgement}
We would like to thank T. Banica and D. Shlyakhtenko for several useful discussions.  Part of this work was done while S.C. was completing his Ph.D. at UC Berkeley, and he would like to thank his thesis supervisor, Dan-Virgil Voiculescu, for his guidance and support.  S.C. would also like to thank Queen's University, where another part of this work was done.
\end{acknowledgement}

\section{Notations and Preliminaries}\label{sec:prelim}

\subsection{Free probability}  We begin by recalling the basic notions of noncommutative probability spaces and distributions of random variables.

\begin{definition}\hfill
\begin{enumerate}
 \item A \textit{noncommutative probability space} is a pair $(\mc A, \varphi)$, where $\mc A$ is a unital algebra over $\C$ and $\varphi:\mc A \to \C$ is a linear functional such that $\varphi(1) = 1$.  Elements in $\mc A$ will be called \textit{random variables}.
\item A W$^*$-probability space $(M,\varphi)$ is a von Neumann algebra $M$ together with a faithful, normal state $\varphi$.  We will not assume that $\varphi$ is tracial.
\end{enumerate}

\end{definition}

The \textit{joint distribution} of a family $(x_i)_{i \in I}$ of random variables in a
noncommutative probability space $(\mc A,\varphi)$ is the collection of \textit{joint
moments}
\begin{equation*}
 \varphi(x_{i_1}\dotsb x_{i_k})
\end{equation*}
for $k \in \N$ and $i_1,\dotsc,i_k \in I$.  This is encoded in the linear
functional $\varphi_x:\C\langle t_i| i \in I \rangle \to \C$ determined by
\begin{equation*}
 \varphi_x(p) = \varphi(p(x))
\end{equation*}
for $p \in \C \langle t_i| i \in I \rangle$, where $p(x)$ means of course to replace
$t_i$ by $x_i$ for each $i \in I$.  Here $\C \langle t_i | i \in I \rangle$ denotes the algebra of polynomials in \textit{non-commuting} indeterminates.

These definitions have natural ``operator-valued'' extensions given by replacing $\C$ by
a more general algebra of scalars, which we now recall.

\begin{definition}
An \textit{operator-valued probability space} $(\mc A, E:\mc A \to \mc B)$ consists of a
unital algebra $\mc A$, a subalgebra $1 \in \mc B \subset \mc A$, and a conditional
expectation $E:\mc A \to \mc B$, i.e., $E$ is a linear map such that $E[1] = 1$ and
\begin{equation*}
 E[b_1ab_2] = b_1E[a]b_2
\end{equation*}
for all $b_1,b_2 \in \mc B$ and $a \in \mc A$.
\end{definition}

The \textit{$\mc B$-valued joint distribution} of a family $(x_i)_{i \in I}$ of random
variables in an operator-valued probability space $(\mc A,E:\mc A \to \mc B)$ is the
collection of \textit{$\mc B$-valued joint moments}
\begin{equation*}
 E[b_0x_{i_1}\dotsb x_{i_k}b_k]
\end{equation*}
for $k \in \N$, $i_1,\dotsc,i_k \in I$ and $b_0,\dotsc,b_k \in \mc B$.

\begin{definition}
Let $(\mc A, E:\mc A \to \mc B)$ be an operator-valued probability space, and let $(\mc
A_i)_{i \in I}$ be a collection of subalgebras $\mc B \subset \mc A_i \subset A$.  The
algebras are said to be \textit{free with amalgamation over $\mc B$}, or \textit{freely
independent with respect to $E$}, if
\begin{equation*}
 E[a_1\dotsb a_k] = 0
\end{equation*}
whenever $E[a_j] = 0$ for $1 \leq j \leq k$ and $a_j \in \mc A_{i_j}$ with $i_{j} \neq
i_{j+1}$ for $1 \leq j < k$.

We say that subsets $\Omega_i \subset \mc A$ are free with amalgamation over $\mc B$ if
the subalgebras $\mc A_i$ generated by $\mc B$ and $\Omega_i$ are freely independent with
respect to $E$.
\end{definition}

\begin{remark}
Voiculescu first defined freeness with amalgamation, and developed its basic theory in
\cite{voi0}.  Freeness with amalgamation also has a rich combinatorial structure,
developed in \cite{sp1}, which we now recall.  For further information on the
combinatorial theory of free probability, the reader is referred to the text \cite{ns}.
\end{remark}

\begin{definition}\hfill
\begin{enumerate}
\item A \textit{partition} $\pi$ of a set $S$ is a collection of disjoint, non-empty sets $V_1,\dotsc,V_r$ such that $V_1 \cup \dotsb \cup V_r = S$.  $V_1,\dotsc,V_r$ are called the \textit{blocks} of $\pi$, and we set $|\pi| = r$. If $s,t \in S$ are in the same block of $\pi$, we write $s \sim_\pi t$. The collection of partitions of $S$ will be denoted $\mc P(S)$, or in the case that $S =\{1,\dotsc,k\}$ by $\mc P(k)$.
\item Given $\pi,\sigma \in \mc P(S)$, we say that $\pi \leq \sigma$ if each
block of $\pi$ is contained in a block of $\sigma$.
There is a least element of $\mc P(S)$ which is larger than both $\pi$ and $\sigma$,
which we denote by $\pi \vee \sigma$.  Likewise there is a greatest element which is smaller than both $\pi$ and $\sigma$, denoted $\pi \wedge \sigma$.  
\item If $S$ is ordered, we say that $\pi \in \mc P(S)$ is \textit{non-crossing} if whenever $V,W$ are blocks of $\pi$ and $s_1 < t_1 < s_2 < t_2$ are such that $s_1,s_2 \in V$ and $t_1,t_2 \in W$, then $V = W$.  The non-crossing partitions can also be defined recursively, a partition $\pi \in \mc P(S)$ is non-crossing if and only if it has a block $V$ which is an interval, such that $\pi \setminus V$ is a non-crossing partition of $S \setminus V$.  The set of non-crossing partitions of $S$ is denoted by $NC(S)$, or by $NC(k)$ in the case that $S = \{1,\dotsc,k\}$.
\item $NC_h(k)$ will denote the collection of non-crossing partitions of $\{1,\dotsc,k\}$ for which each block contains an even number of elements.  Likewise $NC_2(k)$ will denote the non-crossing partitions for which each block contains exactly two elements.
\item  Given $i_1,\dotsc,i_k$ in some index set $I$, we denote by $\ker \mathbf i$ the element of $\mc P(k)$ whose blocks are the equivalence classes of the relation
\begin{equation*}
 s \sim t \Leftrightarrow i_s= i_t.
\end{equation*}
Note that if $\pi \in \mc P(k)$, then $\pi \leq \ker \mathbf i$ is equivalent to the
condition that whenever $s$ and $t$ are in the same block of $\pi$, $i_s$ must equal
$i_t$.
\item $0_k$ and $1_k$ will denote the smallest and largest partitions in $NC(k)$, i.e. $0_k$ has $k$ blocks with one element each, and $1_k$ has one block containing $1,\dotsc,k$.
\end{enumerate}
\end{definition}

\begin{definition} Let $(\mc A, E:\mc A \to \mc B)$ be an operator-valued probability space.
\begin{enumerate}
 \item A \textit{$\mc B$-functional} is a $n$-linear map $\rho:\mc A^n \to \mc B$ such that
\begin{equation*}
 \rho(b_0a_1b_1,a_2b_2,\dotsc,a_nb_n) = b_0\rho(a_1,b_1a_2,\dotsc,b_{n-1}a_n)b_n
\end{equation*}
for all $b_0,\dotsc,b_n \in \mc B$ and $a_1,\dotsc,a_n \in \mc A$.  Equivalently, $\rho$
is a linear map from $\mc A^{\otimes_B n}$ to $\mc B$, where the tensor product is taken
with respect to the obvious $\mc B$-$\mc B$-bimodule structure on $\mc A$.
\item For each $k \in \N$, let $\rho^{(k)}:\mc A^k \to \mc B$ be a $\mc B$-functional.  For $n \in \N$ and $\pi \in NC(n)$, we define a $\mc B$-functional $\rho^{(\pi)}:\mc A^n \to \mc B$ recursively as follows:  If $\pi = 1_n$ is the partition containing only one block, we set $\rho^{(\pi)} = \rho^{(n)}$.  Otherwise let $V = \{l+1,\dotsc,l+s\}$ be an interval of $\pi$ and define
\begin{equation*}
 \rho^{(\pi)}[a_1,\dotsc,a_n] = \rho^{(\pi \setminus V)}[a_1,\dotsc,a_l\rho^{(s)}(a_{l+1},\dotsc,a_{l+s}),a_{l+s+1},\dotsc,a_n]
\end{equation*}
for $a_1,\dotsc,a_n \in \mc A$.
\end{enumerate}
\end{definition}

\begin{example}
Let $(\mc A,E:\mc A \to \mc B)$ be an operator-valued probability space, and for $k \in
\N$ let $\rho^{(k)}:\mc A^k \to \mc B$ be a $\mc B$-functional as above.  If
 \begin{equation*}
\pi = \{\{1,8,9,10\},\{2,7\},\{3,4,5\}, \{6\}\} \in NC(10),
\end{equation*}
\begin{equation*}
 \setlength{\unitlength}{0.6cm} \begin{picture}(9,4)\thicklines \put(0,0){\line(0,1){3}}
\put(0,0){\line(1,0){9}} \put(9,0){\line(0,1){3}} \put(8,0){\line(0,1){3}}
\put(7,0){\line(0,1){3}} \put(1,1){\line(1,0){5}} \put(1,1){\line(0,1){2}}
\put(6,1){\line(0,1){2}} \put(2,2){\line(1,0){2}} \put(2,2){\line(0,1){1}}
\put(3,2){\line(0,1){1}} \put(4,2){\line(0,1){1}} \put(5,2){\line(0,1){1}}
\put(-0.1,3.3){1} \put(0.9,3.3){2} \put(1.9,3.3){3} \put(2.9,3.3){4} \put(3.9,3.3){5}
\put(4.9,3.3){6} \put(5.9,3.3){7} \put(6.9,3.3){8} \put(7.9,3.3){9} \put(8.7,3.3){10}
\end{picture}
\end{equation*}
then the corresponding $\rho^{(\pi)}$ is given by
\begin{equation*}
 \rho^{(\pi)}[a_1,\dotsc,a_{10}] = \rho^{(4)}(a_1\cdot \rho^{(2)}(a_2\cdot \rho^{(3)}(a_3,a_4,a_5),\rho^{(1)}(a_6)\cdot a_7),a_8,a_9,a_{10}).
\end{equation*}
\end{example}

\begin{definition}
Let $(\mc A, E:\mc A \to \mc B)$ be an operator-valued probability space.
\begin{enumerate}
\item For $k \in \N$, define the \textit{$B$-valued moment functions} $E^{(k)}:\mc A^k \to \mc B$ by
\begin{equation*}
 E^{(k)}[a_1,\dotsc,a_k] = E[a_1\dotsb a_k].
\end{equation*}

\item The \textit{operator-valued free cumulants} $\kappa_E^{(k)}:\mc A^k \to \mc B$ are the $\mc B$-functionals defined by the \textit{moment-cumulant formula}:
\begin{equation*}
 E[a_1\dotsb a_n] = \sum_{\pi \in NC(n)} \kappa_E^{(\pi)}[a_1,\dotsc,a_n]
\end{equation*}
for $n \in \N$ and $a_1,\dotsc,a_n \in \mc A$.
\end{enumerate}

\end{definition}

Note that the right hand side of the moment-cumulant formula is equal to
$\kappa_E^{(n)}[a_1,\dotsc,a_n]$ plus lower order terms and hence can be
solved recursively for $\kappa_E^{(n)}$.  In fact the cumulant functions can be solved
from the moment functions by the following formula from \cite{sp1}: for each $n \in \N$,
$\pi \in NC(n)$ and $a_1,\dotsc,a_n \in \mc A$,
\begin{equation*}
 \kappa_E^{(\pi)}[a_1,\dotsc,a_n] = \sum_{\substack{\sigma \in NC(n)\\ \sigma \leq \pi}} \mu_n(\sigma,\pi)E^{(\sigma)}[a_1,\dotsc,a_n],
\end{equation*}
where $\mu_n$ is the \textit{M\"{o}bius function} on the partially ordered set $NC(n)$.  $\mu_n$ is characterized by the relations
\begin{equation*}
 \sum_{\substack{\tau \in NC(n)\\ \sigma \leq \tau \leq \pi}} \mu_n(\sigma,\tau) = \delta_{\sigma,\pi} = \sum_{\substack{\tau \in NC(n)\\ \sigma \leq \tau \leq \pi}} \mu_n(\tau,\pi)
\end{equation*}
for any $\sigma \leq \pi$ in $NC(n)$, and $\mu(\sigma,\pi) = 0$ if $\sigma \not\leq \pi$, see \cite{ns}.

The key relation between operator-valued free cumulants and freeness with amalgamation is
that freeness can be characterized in terms of the ``vanishing of mixed cumulants''.
\begin{theorem}[\cite{sp1}]
Let $(\mc A, E:\mc A \to \mc B)$ be an operator-valued probability space, and let $(\mc
A_i)_{i \in I}$ be a collection of subalgebras $\mc B \subset \mc A_i \subset \mc A$.
Then the family $(\mc A_i)_{i \in I}$ is free with amalgamation over $\mc B$ if and only
if
\begin{equation*}
 \kappa_E^{(\pi)}[a_1,\dotsc,a_n] = 0
\end{equation*}
whenever $a_j \in \mc A_{i_j}$ for $1 \leq j \leq n$ and $\pi \in NC(n)$ is such that
$\pi \not\leq \ker \mathbf i$. \qed
\end{theorem}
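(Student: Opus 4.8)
The plan is to establish both implications, after first reducing the statement about the partitioned cumulants $\kappa_E^{(\pi)}$ to a statement about the individual cumulant functionals $\kappa_E^{(n)}$. In the recursive construction of $\kappa_E^{(\pi)}$, the arguments fed into the cumulant attached to a block $V$ of $\pi$ have the form $a_j$, possibly multiplied on either side by values of nested lower-order cumulants, which lie in $\mc B$; since $\mc B \subset \mc A_{i_j}$, such an argument still lies in $\mc A_{i_j}$. Consequently, if $\pi \not\leq \ker \mathbf i$ then some block $V$ of $\pi$ contains indices $j,l$ with $i_j \neq i_l$, and the factor of $\kappa_E^{(\pi)}$ coming from $V$ is a \emph{mixed} cumulant $\kappa_E^{(|V|)}$ whose arguments come from at least two different subalgebras. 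Thus it suffices to show that freeness is equivalent to the vanishing of all mixed cumulants $\kappa_E^{(n)}[a_1,\dotsc,a_n]$ of order $n \geq 2$, that is, those whose entries do not all lie in a single $\mc A_i$: the case $\pi = 1_n$ recovers these, and conversely their vanishing forces $\kappa_E^{(\pi)} = 0$ whenever $\pi \not\leq \ker \mathbf i$ by the block factorization just described.

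For the implication that vanishing of mixed cumulants forces freeness I would argue combinatorially. Let $a_j \in \mc A_{i_j}$ with $E[a_j] = 0$ and $i_j \neq i_{j+1}$ for $1 \leq j < n$; the goal is $E[a_1 \dotsb a_n] = 0$. By the moment-cumulant formula and the hypothesis, $E[a_1 \dotsb a_n] = \sum_{\pi \leq \ker \mathbf i} \kappa_E^{(\pi)}[a_1,\dotsc,a_n]$, the sum running over $\pi \in NC(n)$. The key observation is that every such $\pi$ has a singleton block: $\pi$ possesses an interval block $V$, and if $|V| \geq 2$ then $V$ contains two consecutive positions $j,j+1$, forcing $i_j = i_{j+1}$ and contradicting the alternating hypothesis, so $V = \{j\}$ is a singleton. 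Beginning the recursive evaluation of $\kappa_E^{(\pi)}$ at this interval block inserts the factor $\kappa_E^{(1)}[a_j] = E[a_j] = 0$ into a neighbouring argument, and by the bimodule property every term then vanishes. Hence $E[a_1 \dotsb a_n] = 0$ and the family is free.

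For the converse I would avoid a direct induction by exploiting the first implication together with the uniqueness of amalgamated free moments. Assume $(\mc A_i)$ is free with respect to $E$. Define modified cumulant functionals $\tilde\kappa^{(n)}$ by setting $\tilde\kappa^{(n)} = \kappa_E^{(n)}$ on tuples all of whose entries lie in a common $\mc A_i$ and $\tilde\kappa^{(n)} = 0$ on all mixed tuples, extended by multilinearity, and let $\tilde E$ be the $\mc B$-functional produced from the $\tilde\kappa^{(n)}$ via the moment-cumulant formula. By construction the mixed cumulants of $\tilde E$ vanish, so the first implication shows $(\mc A_i)$ is free with respect to $\tilde E$; moreover $\tilde E$ agrees with $E$ on each $\mc A_i$, since on single-algebra tuples the two families of cumulants coincide. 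But a family of subalgebras that is free over $\mc B$ has all of its mixed moments determined recursively from its marginal distributions, so two conditional expectations that are free on $(\mc A_i)$ and agree on each $\mc A_i$ must agree on the algebra they generate. Therefore $\tilde E = E$, whence $\kappa_E^{(n)} = \tilde\kappa^{(n)}$ and all mixed cumulants of $E$ vanish.

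The main obstacle is the input needed for the converse: the assertion that freeness with amalgamation together with the marginal distributions determines all joint moments. This is the recursive computation underlying the amalgamated free product, and one must check it carefully in the algebraic (non-tracial, not necessarily positive) setting of the theorem, in particular that the purely combinatorially defined $\tilde E$ is a genuine $\mc B$-bimodule conditional expectation with $\tilde E[1] = 1$. A secondary point requiring care throughout is the bookkeeping in the reduction step, namely that the nested $\mc B$-valued factors produced in the recursive definition of $\kappa_E^{(\pi)}$ never move an argument out of its algebra $\mc A_{i_j}$; this is exactly where the hypothesis $\mc B \subset \mc A_i$ enters. Should one prefer a self-contained proof of the converse, the alternative is a direct induction on $n$ that centres the arguments and invokes the formula for cumulants having products of free elements as entries, but that route is combinatorially heavier and I would expect that product formula to be the hard part.
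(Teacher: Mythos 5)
The paper does not actually prove this statement: it is quoted from Speicher's memoir \cite{sp1} without proof, so there is no in-paper argument to measure yours against. What you have written is essentially a reconstruction of the standard proof (as in \cite{sp1} and \cite{ns}), and most of it is correct and complete. Your reduction to the vanishing of mixed cumulants $\kappa_E^{(n)}$ is right: the recursion defining $\kappa_E^{(\pi)}$ only ever replaces an argument $a_j$ by $b\,a_j\,b'$ with $b,b' \in \mc B$, and $\mc B \subset \mc A_{i_j}$ keeps this in $\mc A_{i_j}$, so a block of $\pi$ violating $\pi \leq \ker \mathbf i$ contributes a vanishing mixed cumulant, while $\pi = 1_n$ recovers the mixed cumulants themselves. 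The implication from vanishing of mixed cumulants to freeness, via the observation that every $\pi \in NC(n)$ with $\pi \leq \ker \mathbf i$ for alternating $\mathbf i$ has only singleton interval blocks, so that $\kappa_E^{(1)}[a_j] = E[a_j] = 0$ annihilates $\kappa_E^{(\pi)}$, is also exactly the standard argument and is complete.

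The one place where your proof is not self-contained is the converse, and you have correctly located the difficulty yourself. Prescribing $\tilde\kappa^{(n)}$ to vanish on mixed tuples and to agree with $\kappa_E^{(n)}$ on single-algebra tuples defines, via the moment-cumulant formula, only a family of $\mc B$-multilinear functionals on tuples $(a_1,\dotsc,a_n)$ with $a_j \in \mc A_{i_j}$. To say that $(\mc A_i)_{i \in I}$ is ``free with respect to $\tilde E$'' and to invoke your first implication, you need $\tilde E$ to be a genuine conditional expectation on the subalgebra generated by the $\mc A_i$, i.e.\ you must verify that the prescribed moments are compatible with the relations there --- in particular with multiplication inside each $\mc A_i$ and with the $\mc B$-bimodule structure. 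That consistency is precisely the existence of the amalgamated free product with prescribed marginals, which is the substantive content of \cite{sp1}; it can be checked at the level of tuples using the formula for cumulants with products as entries (\cite{sp2}), but it is not free. The other input, that freeness together with the marginal distributions determines all joint moments, is by contrast an easy induction (center the arguments and apply the freeness recursion) and could be written out in a few lines. So: correct architecture, two fully correct directions, and one acknowledged hole whose filling is essentially the cited reference.
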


\subsection{``Fattening'' of noncrossing partitions}  A theme in this paper will be relating noncrossing partitions of $k$ points with those of $2k$ points.  The basic operation we will use is the ``fattening'' procedure, which gives a bijection $\pi \mapsto \widetilde \pi$ from $NC(k)$ to $NC_2(2k)$.  Let us now recall this procedure, along with some related operations on partitions.

\begin{definition}\hfill
\begin{enumerate}
\item Given $\pi \in NC(k)$, we define $\widetilde \pi \in NC_2(2k)$ as follows:  For each block $V = (i_1,\dotsc,i_s)$ of $\pi$, we add to $\widetilde \pi$ the pairings $(2i_1-1,2i_s), (2i_1,2i_2-1),\dotsc$, $(2i_{s-1},2i_s-1)$.
\item Given $\pi \in NC(k)$, we define $\hat \pi \in NC(2k)$ by
partitioning the $k$ pairs $(1,2),(3,4),\dotsc$, $(2k-1,2k)$ according to $\pi$.
\item Given $\pi \in \mc P(k)$, let $\overleftarrow{\pi}$ denote the partition obtained by shifting $k$ to $k-1$ for $1 < k \leq m$ and sending $1$ to $m$, i.e.,
\begin{equation*}
s \sim_{\overleftarrow{\pi}} t \qquad\Longleftrightarrow \qquad(s + 1) \sim_\pi (t+1),
\end{equation*}
where we count modulo $k$ on the right hand side.  Likewise we define $\overrightarrow{\pi}$ in the obvious way.
\end{enumerate}
\end{definition}

\begin{example} Let us demonstrate these operations for $\pi=\{\{1,4,5\},\{2,3\}$, $\{6\}\}$.

\begin{center}
\begin{pspicture}(0,0)(6,2)
{\psset{xunit=.3cm,yunit=.6cm,linewidth=.5pt}
\psline(0,2)(0,0)\psline(0,0)(12,0) \psline(3,1.2)(6,1.2)
 \psline(12,2)(12,0)
 \psline(3,2)(3,1.2) \psline(6,2)(6,1.2)
\psline(9,2)(9,0)\psline(15,0)(15,2)\uput[u](0,2){1}\uput[u](3,2){2}\uput[u](6,2){3}
\uput[u](9,2){4}\uput[u](12,2){5} \uput[u](15,2){6} \uput[u](-2,1){$\pi=$}}
\end{pspicture}
\qquad
\begin{pspicture}(0,0)(4.8,2)
{\psset{xunit=.3cm,yunit=.6cm,linewidth=.5pt} \psline(0,2)(0,0)\psline(0,0)(12.7,0)
\psline(3,1.2)(6.7,1.2)\psline(3.7,2)(3.7,1.5)
\psline(9.7,2)(9.7,.3)\psline(9.7,.3)(12,.3) \psline(12,2)(12,0.3)
 \psline(.7,2)(.7,.3)\psline(.7,.3)(9,.3)\psline(6.7,1.2)(6.7,2)
\psline(3.7,1.5)(6,1.5) \psline(3,2)(3,1.2) \psline(6,2)(6,1.5)
\psline(9,2)(9,0.3)\psline(15,0)(15,2) \psline(15.7,0)(15.7,2)\psline(15.7,0)(15,0)
\psline(12.7,0)(12.7,2) \uput[u](0,2){1} \uput[u](.7,2){$\overline
1$}\uput[u](3,2){2}\uput[u](3.7,2){$\overline 2$}\uput[u](6,2){3}\uput[u](6.7,2){$\overline 3$}
\uput[u](9,2){4}\uput[u](9.7,2){$\overline
4$}\uput[u](12,2){5}\uput[u](12.7,2){$\overline
5$}\uput[u](15,2){6}\uput[u](15.7,2){$\overline 6$} \uput[u](-2,1){$\widetilde\pi=$}}
\end{pspicture}\\
\begin{pspicture}(0,0)(4.8,2)
{\psset{xunit=.3cm,yunit=.6cm,linewidth=.5pt} \psline(0,2)(0,0)\psline(0,0)(12.7,0)
\psline(3,1.2)(6.7,1.2)\psline(3.7,2)(3.7,1.2)
\psline(9.7,2)(9.7,0) \psline(12,2)(12,0)
 \psline(.7,2)(.7,0)\psline(6.7,1.2)(6.7,2)
 \psline(3,2)(3,1.2) \psline(6,2)(6,1.2)
\psline(9,2)(9,0)\psline(15,0)(15,2) \psline(15.7,0)(15.7,2)\psline(15.7,0)(15,0)
\psline(12.7,0)(12.7,2) \uput[u](0,2){1} \uput[u](.7,2){$\overline
1$}\uput[u](3,2){2}\uput[u](3.7,2){$\overline 2$}\uput[u](6,2){3}\uput[u](6.7,2){$\overline 3$}
\uput[u](9,2){4}\uput[u](9.7,2){$\overline
4$}\uput[u](12,2){5}\uput[u](12.7,2){$\overline
5$}\uput[u](15,2){6}\uput[u](15.7,2){$\overline 6$} \uput[u](-2,1){$\widehat\pi=$}}
\end{pspicture}
\end{center}

\end{example}

There is a simple description of the inverse of the fattening procedure: it sends $\sigma \in NC_2(2k)$ to the
partition $\tau \in NC(k)$ such that $\sigma \vee \hat 0_k = \hat \tau$.  Thus we have
\begin{equation*}
\hat \pi=\widetilde \pi\vee \hat 0_k
\end{equation*}
for $\pi \in NC(k)$.  Note also that $\hat 0_k=\widetilde 0_k$ and that $\hat 1_k=1_{2k}$.

Let us now introduce two more operations on partitions.
\begin{definition} \hfill
\begin{enumerate}
 \item Given $\pi,\sigma \in NC(k)$, we define $\pi \wr \sigma \in \mc P(2k)$ to be the partition obtained by partitioning the odd numbers $\{1,3,\dotsc,2k-1\}$ according to $\pi$ and the even numbers $\{2,4,\dotsc,2k\}$ according to $\sigma$.
\item Let $\pi \in NC(k)$.  The \textit{Kreweras complement} $K(\pi)$ is the largest partition in $NC(k)$ such that $\pi \wr K(\pi)$ is noncrossing.
\end{enumerate}
\end{definition}

\begin{remark}
The Kreweras complement is in fact a lattice anti-isomorphism of $NC(k)$ with itself, and plays an important role in the combinatorics of free probability.  As we recall below, there are nice relations between the Kreweras complement and the fattening procedure.
\end{remark}

\begin{example}
If $\pi = \{\{1,5\},\{2,3,4\},\{6,8\},\{7\}\}$ then $K(\pi) =
\{\{1,4\},\{2\},\{3\},\{5,8\}$, $\{6,7\}\}$, which can be seen follows:
\begin{center}
\begin{pspicture}(0,0)(8,2.5)
{\psset{xunit=.5cm,yunit=1cm,linewidth=.5pt} \psline(0,2)(0,0)\psline(0,0)(8,0)\psline(8,0)(8,2)
\psline(2,2)(2,1)\psline(2,1)(6,1)\psline(6,1)(6,2)\psline(4,1)(4,2)
\psline(10,2)(10,.5)\psline(10,.5)(14,.5)\psline(14,.5)(14,2) \psline(12,2)(12,1.5)}
{\psset{xunit=.5cm,yunit=1cm,linewidth=1pt} \psline(1,2)(1,.5)\psline(1,.5)(7,.5)\psline(7,.5)(7,2)
\psline(3,2)(3,1.5) \psline(5,2)(5,1.5)
\psline(9,2)(9,0)\psline(9,0)(15,0)\psline(15,0)(15,2)
\psline(11,2)(11,1)\psline(11,1)(13,1)\psline(13,1)(13,2) \uput[u](0,2){1}
\uput[u](1,2){$\overline 1$}\uput[u](2,2){2}\uput[u](3,2){$\overline
2$}\uput[u](4,2){3}\uput[u](5,2){$\overline 3$} \uput[u](6,2){4}\uput[u](7,2){$\overline
4$}\uput[u](8,2){5}\uput[u](9,2){$\overline 5$}\uput[u](10,2){6}\uput[u](11,2){$\overline
6$} \uput[u](12,2){7}\uput[u](13,2){$\overline
7$}\uput[u](14,2){8}\uput[u](15,2){$\overline 8$}}
\end{pspicture}
\end{center}

\end{example}

The following key lemma connecting these operations was proved in \cite{cs1}.  Note that (1) is a generalization of
\begin{equation*}
 \widetilde{K(0_k)} = \widetilde{1_k} = \overleftarrow{\widetilde{0_k}},
\end{equation*}
and (2) is a generalization of the relation
\begin{equation*}
 K(\widetilde 0_k \vee \widetilde \pi) = K(\hat \pi) = 0_k \wr K(\pi)
\end{equation*}
for $\pi \in NC(k)$, both of which are clear from the definitions.

\begin{lemma}[\cite{cs1}]\label{fatfacts}\hfill
\begin{enumerate}
\item If $\pi \in NC(k)$ then
\begin{equation*}
 \widetilde{K(\pi)} = \overleftarrow{\widetilde \pi}.
\end{equation*}

\item If $\sigma,\pi \in NC(k)$ and $\sigma \leq \pi$, then $\widetilde \sigma \vee \widetilde \pi \in NC_h(2k)$ and
\begin{equation*}
 K(\widetilde \sigma \vee \widetilde \pi) = \sigma \wr K(\pi).
\end{equation*}
\end{enumerate}
\qed
\end{lemma}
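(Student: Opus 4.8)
The plan is to prove part (2) first and then deduce part (1) from its diagonal case $\sigma = \pi$, using the standard calculus of the Kreweras complement together with the explicit special cases recorded just before the lemma. Throughout I would lean on the bijectivity of the fattening $\pi \mapsto \widetilde\pi$ and on the planar picture in which $\widetilde\pi$ traces the boundary of the fattened blocks of $\pi$ while $K(\pi)$ records the complementary regions.

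For part (2) I would separate the two assertions. That every block of $\widetilde\sigma \vee \widetilde\pi$ has even size is automatic and uses neither noncrossing-ness nor the hypothesis $\sigma \le \pi$: both $\widetilde\sigma$ and $\widetilde\pi$ are perfect matchings of $\{1,\dots,2k\}$, so their join in $\mc P(2k)$ is the partition into connected components of the union multigraph, in which every vertex has degree exactly two; such a graph is a disjoint union of cycles, each alternating edges of $\widetilde\sigma$ and $\widetilde\pi$ (a shared pair giving a $2$-cycle), so every component has even length. The genuine content is that this join is noncrossing, and it is precisely here that $\sigma \le \pi$ enters: inside the doubled points of a single block of $\pi$, the pairings of $\widetilde\sigma$ coming from the $\sigma$-subblocks nest inside the polygon pairing of $\widetilde\pi$, creating no crossings. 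I would verify this local nesting directly, or by induction on the interval-block decomposition of $\pi$.

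For the Kreweras formula in part (2), the case $\sigma = 0_k$ is exactly the stated identity $K(\hat\pi) = 0_k \wr K(\pi)$, since $\widetilde 0_k = \hat 0_k$ and $\widetilde 0_k \vee \widetilde\pi = \hat\pi$. To reach a general $\sigma \in [0_k,\pi]$ I would induct by merging two blocks of $\sigma$ at a time — legitimate because $\sigma \le \pi$ keeps the merged blocks inside one block of $\pi$ — tracking the effect of each merge on both sides of $K(\widetilde\sigma \vee \widetilde\pi) = \sigma \wr K(\pi)$: on the right a merge simply coarsens the odd-point partition, and on the left it is matched by the corresponding change in $\widetilde\sigma$. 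Now I deduce part (1). Taking $\sigma = \pi$ and using $\widetilde\pi \vee \widetilde\pi = \widetilde\pi$ gives the diagonal identity $K(\widetilde\pi) = \pi \wr K(\pi)$. Applying this with $K(\pi)$ in place of $\pi$ yields $K(\widetilde{K(\pi)}) = K(\pi) \wr K^2(\pi)$, while applying it to $\pi$ and using that $K$ commutes with the rotation $\overleftarrow{(\,\cdot\,)}$ gives $K(\overleftarrow{\widetilde\pi}) = \overleftarrow{K(\widetilde\pi)} = \overleftarrow{\pi \wr K(\pi)}$. The two right-hand sides agree by the elementary identity $\overleftarrow{\alpha \wr \beta} = \beta \wr \overleftarrow\alpha$ together with $K^2 = \overleftarrow{(\,\cdot\,)}$, so $K(\widetilde{K(\pi)}) = K(\overleftarrow{\widetilde\pi})$; since $K$ is a bijection of $NC(2k)$ this gives $\widetilde{K(\pi)} = \overleftarrow{\widetilde\pi}$, and the base case $\widetilde{K(0_k)} = \overleftarrow{\widetilde 0_k}$ serves as a check on the rotation conventions.

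I expect the main obstacle to be twofold. First, establishing noncrossing-ness of $\widetilde\sigma \vee \widetilde\pi$ cleanly: this is the one place where $\sigma \le \pi$ is truly used, and a careless argument will silently assume it. Second, in the Kreweras formula, ensuring that $\sigma \wr K(\pi)$ is the \emph{largest} admissible complement rather than merely an admissible one, which is the delicate point in any Kreweras-complement computation. Pinning down the direction of the rotation in $K^2 = \overleftarrow{(\,\cdot\,)}$ and in $\overleftarrow{\alpha \wr \beta} = \beta \wr \overleftarrow\alpha$ is a minor but real bookkeeping hazard in the deduction of part (1).
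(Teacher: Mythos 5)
The paper offers no proof of this lemma --- it is imported verbatim from \cite{cs1} with a \qed\ --- so there is nothing internal to compare against; I am judging your plan on its own terms. The parts you actually carry out are correct. The alternating-cycle argument shows unconditionally that every block of $\widetilde\sigma\vee\widetilde\pi$ has even size, and your deduction of part (1) from the diagonal identity $K(\widetilde\pi)=\pi\wr K(\pi)$ goes through exactly as written: with the paper's conventions one checks $K^2=\overleftarrow{(\,\cdot\,)}$, $K(\overleftarrow\alpha)=\overleftarrow{K(\alpha)}$ and $\overleftarrow{\alpha\wr\beta}=\beta\wr\overleftarrow\alpha$, both sides become $K(\pi)\wr\overleftarrow{\pi}$, and injectivity of $K$ finishes it. You are also right, and it is worth insisting on, that $\sigma\le\pi$ is genuinely needed for noncrossingness of the join: for $\sigma=\{\{1,2\},\{3,4\}\}$ and $\tau=\{\{1,4\},\{2,3\}\}$ in $NC(4)$ (incomparable) one gets $\widetilde\sigma\vee\widetilde\tau=\{\{1,4,5,8\},\{2,3,6,7\}\}$, which crosses. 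So your ``local nesting'' step cannot be waved away; the clean way to organize it is to observe that every edge of $\widetilde\sigma$ and of $\widetilde\pi$ joins doubled points of a single block of $\pi$, hence $\widetilde\sigma\vee\widetilde\pi\le\hat\pi$, and then check noncrossingness block by block, reducing to $\pi=1_k$.

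The genuine gap is the inductive step for $K(\widetilde\sigma\vee\widetilde\pi)=\sigma\wr K(\pi)$. The sentence ``on the left it is matched by the corresponding change in $\widetilde\sigma$'' conceals all of the content, and the picture it suggests is backwards: fattening is not monotone (for $\sigma<\sigma'$ the pairings $\widetilde\sigma$ and $\widetilde{\sigma'}$ are incomparable), and coarsening $\sigma$ makes the join strictly \emph{finer}, not coarser --- compare $\widetilde 0_k\vee\widetilde 1_k=1_{2k}$ with $\widetilde 1_k\vee\widetilde 1_k=\widetilde 1_k$. So a merge of two blocks of $\sigma$ must be shown to \emph{split} exactly one block of the old join in exactly the right place, which you have not done and which is not routine. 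You can avoid this induction entirely: once noncrossingness of the $\mathcal P(2k)$-join is established, it coincides with the join in $NC(2k)$, so the anti-isomorphism property of the Kreweras complement gives $K(\widetilde\sigma\vee\widetilde\pi)=K(\widetilde\sigma)\wedge K(\widetilde\pi)$; the diagonal identity yields $K(\widetilde\sigma)=\sigma\wr K(\sigma)$ and $K(\widetilde\pi)=\pi\wr K(\pi)$; and since meets of partitions factor over the odd and even positions this equals $(\sigma\wedge\pi)\wr(K(\sigma)\wedge K(\pi))=\sigma\wr K(\pi)$, where $\sigma\le\pi$ is used precisely here. This reduces the whole lemma to the single identity $K(\widetilde\pi)=\pi\wr K(\pi)$, which should be verified directly from the definition of the fattening rather than obtained as the terminus of your merge induction.
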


\subsection{Free quantum groups}\label{sec:easy}  We now briefly recall some notions and results from \cite{bsp}.  

\begin{definition}[\cite{wor1}]
An \textit{orthogonal Hopf algebra} is a unital C$^*$-algebra $A$ generated by self-adjoint elements $\{u_{ij}:1 \leq i,j \leq n\}$, such that the following conditions hold:
\begin{enumerate}
 \item The inverse of $u = (u_{ij}) \in M_n(A)$ is the transpose $u^t = (u_{ji})$.
 \item $\Delta(u_{ij}) = \sum_{k} u_{ik} \otimes u_{kj}$ determines a morphism $\Delta:A \to A \otimes A$.
 \item $\epsilon(u_{ij}) = \delta_{ij}$ defines a morphism $\epsilon:A \to \C$.
\item $S(u_{ij}) = u_{ji}$ defines a morphism $S:A \to A^{op}$.  
\end{enumerate}
\end{definition}

It follows from the definitions that $\Delta, \epsilon, S$ satisfy the usual Hopf algebra axioms.  The motivating example is $C(G)$ where $G \subset O_n$ is a compact group of orthogonal matrices, here $u_{ij}$ are the coordinate functions sending $g \in G$ to its $(i,j)$-entry $g_{ij}$.

In fact any commutative orthogonal Hopf algebra is $C(G)$ for a compact group $G \subset O_n$.  We will therefore use the heuristic notation ``$A = C(G)$'', where $G$ is a \textit{compact orthogonal quantum group}.  Of course if $A$ is noncommutative then $G$ cannot exist as a concrete object, and all statements about $G$ must be interpreted in terms of the Hopf algebra $A$.

We will be mostly interested in the following examples, constructed in \cite{wang1},\cite{wang2},\cite{bbc2},\cite{bsp}.

\begin{definition}\hfill
\begin{enumerate}
 \item $C(O_n^+)$ is the universal C$^*$-algebra generated by self-adjoint $\{u_{ij}:1 \leq i,j \leq n\}$, such that $u = (u_{ij}) \in M_n(C(O_n^+))$ is orthogonal.
\item $C(S_n^+)$ is the universal C$^*$-algebra generated by projections $\{u_{ij}:1 \leq i,j \leq n\}$, such that the sum along any row or column of $u = (u_{ij}) \in M_n(C(S_n^+))$ is the identity.
\item $C(H_n^+)$ is the universal C$^*$-algebra generated by self-adjoint $\{u_{ij}:1 \leq i,j \leq n\}$ such that $u = (u_{ij}) \in M_n(C(H_n^+))$ is orthogonal and $u_{ik}u_{il} = 0 = u_{kj}u_{lj}$ if $k \neq l$.
\item $C(B_n^+)$ is the universal C$^*$-algebra generated by self-adjoint $\{u_{ij}:1 \leq i,j \leq n\}$ such that $u = (u_{ij}) \in M_n(C(B_n^+))$ is orthogonal and the sum along any row or column of $u$ is the identity.
\end{enumerate}
\end{definition}

In each case the existence of the Hopf algebra morphisms follows from the defining universal properties.  Note that the we have the following inclusions:
\begin{equation*}
\begin{matrix}
B_n^+&\subset&O_n^+\cr
&&\cr
\cup&&\cup\cr
&&\cr
S_n^+&\subset&H_n^+
\end{matrix}
\end{equation*}

Our interest in these quantum groups are that they are ``free versions'' of the classical orthogonal, permutation, hyperoctahedral and bistochastic groups.  To make this notion precise, it is best to look at the representation theory of these quantum groups.

Let $S_n \subset G \subset O_n^+$ be a compact orthogonal quantum group and let $u,v$ be the fundamental representations of $G,S_n$ on $\C^n$, respectively.  By functoriality, the space $\Hom(u^{\otimes k},u^{\otimes l})$ of intertwining operators is contained in $\Hom(v^{\otimes k}, v^{\otimes l})$ for any $k,l$.  But the Hom-spaces for $v$ are well-known: they are spanned by operators $T_\pi$ with $\pi$ belonging to the set $\mc P(k,l)$ of partitions between $k$ upper and $l$ lower points.  Explicitly, if $e_1,\dotsc,e_n$ denotes the standard basis of $\C^n$, then the formula for $T_\pi$ is given by
\begin{equation*}
 T_\pi(e_{i_1} \otimes \dotsb \otimes e_{i_k}) = \sum_{j_1,\dotsc,j_l} \delta_\pi\begin{pmatrix} i_1 \dotsb i_k\\ j_1\dotsb j_l\end{pmatrix}e_{j_1} \otimes \dotsb e_{j_l}.
\end{equation*}
Here the $\delta$ symbol appearing on the right hand side is 1 when the indices ``fit'', i.e. if each block of $\pi$ contains equal indices, and 0 otherwise.

It follows from the above discussion that $\Hom(u^{\otimes k}, u^{\otimes l})$ consists of certain linear combinations of the operators $T_\pi$, with $\pi \in \mc P(k,l)$.  We call $G$ ``easy'' if these spaces are spanned by partitions.

\begin{definition}[\cite{bsp}]\label{easydef}
A compact orthogonal quantum group $S_n \subset G \subset O_n^+$ is called \textit{easy} if for each $k,l \in \N$, there exist sets $D(k,l) \subset \mc P(k,l)$ such that $\Hom(u^{\otimes k}, u^{\otimes l}) = \mathrm{span}(T_\pi:\pi \in D(k,l))$.  If we have $D(k,l) \subset NC(k,l)$ for each $k,l \in \N$, we say that $G$ is a \textit{free quantum group}.
\end{definition}

There are four natural examples of classical groups which are easy: 
\begin{center}
 \begin{tabular}{|l|l|}
  \hline
Group & Partitions\\
\hline
Permutation group $S_n$ & $\mc P$: All partitions\\
\hline
Orthogonal group $O_n$ & $\mc P_2$: Pair partitions\\
\hline
Hyperoctahedral group $H_n$ & $\mc P_h$: Partitions with even block sizes\\
\hline
Bistochastic group $B_n$ & $\mc P_b$: Partitions with block size $\leq 2$\\
\hline
 \end{tabular}
\end{center}

The free quantum groups $O_n^+,S_n^+,H_n^+$ and $B_n^+$ are obtained by restricting to noncrossing partitions:
\begin{center}
 \begin{tabular}{|l|l|}
  \hline
Quantum group & Partitions\\
\hline
$S_n^+$ & $NC$: All noncrossing partitions\\
\hline
$O_n^+$ & $NC_2$: Noncrossing pair partitions\\
\hline
$H_n^+$ & $NC_h$: Noncrossing partitions with even block sizes\\
\hline
$B_n^+$ & $NC_b$: Noncrossing partitions with block size $\leq 2$\\
\hline
 \end{tabular}
\end{center}

For further discussion of easy quantum groups and their classification, see \cite{bsp},\cite{bcs1}.  

\subsection{Weingarten formula}

It is a fundamental result of Woronowicz \cite{wor1} that if $G$ is a compact orthogonal quantum group, then there is a unique \textit{Haar state} $\int:C(G) \to \C$ which is left and right invariant in the sense that
\begin{equation*}
 (\textstyle \int \otimes \mathrm{id})\Delta(f) = (\textstyle\int f) \cdot 1_{C(G)} = (\mathrm{id} \otimes \textstyle\int)\Delta(f), \qquad (f \in C(G)).
\end{equation*}

One very useful consequence of the ``easiness'' condition is that it leads to a combinatorial \textit{Weingarten formula} for computing the Haar state, which we recall from \cite{col},\cite{bc1},\cite{bc2},\cite{bsp}.

\begin{definition}
Let $D(k) \subset NC(k)$ be a collection of noncrossing partitions.  For $n \in \N$, define the \textit{Gram matrix} $(G_{D(k),n}(\pi,\sigma))_{\pi,\sigma \in D(k)}$ by
\begin{equation*}
 G_{D(k),n}(\pi,\sigma) = n^{|\pi \vee \sigma|}.
\end{equation*}
(Note that the join $\vee$ is taken in $\mc P(k)$, so that $\pi \vee \sigma$ may have crossings even if $\pi$ and $\sigma$ do not).  $G_{D(k),n}$ is invertible for $n \geq 4$, let $W_{D(k),n}$ denote its inverse.
\end{definition}

\begin{theorem}
Let $G \subset O_n^+$ be a free quantum group, and let $D(k) \subset NC(0,k)$ be the associated partitions with no upper points.  If $n \geq 4$, then for any $1 \leq i_1,j_1,\dotsc,i_k,j_k \leq n$ we have
\begin{equation*}
 \int_G u_{i_1j_1}\dotsb u_{i_kj_k} = \sum_{\substack{\pi,\sigma \in D(k)\\ \pi \leq \ker \mathbf i\\ \sigma \leq \ker \mathbf j}} W_{D(k),n}(\pi,\sigma).
\end{equation*}
 \qed
\end{theorem}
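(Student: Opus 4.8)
The plan is to identify the left-hand side with the matrix entries of a concrete orthogonal projection, and then to compute that projection by pure linear algebra. First I would recall Woronowicz's fundamental result \cite{wor1} that for a compact orthogonal quantum group the operator
\[
P = (\textstyle\int_G \otimes \,\mathrm{id})(u^{\otimes k})
\]
acting on $(\C^n)^{\otimes k}$ is the orthogonal projection onto the fixed point space $\mathrm{Fix}(u^{\otimes k}) = \Hom(1, u^{\otimes k})$. Expanding $P$ in coordinates against the standard basis $\{e_{i_1} \otimes \dotsb \otimes e_{i_k}\}$, the $\bigl((i_1,\dotsc,i_k),(j_1,\dotsc,j_k)\bigr)$-entry of $P$ is exactly $\int_G u_{i_1 j_1} \dotsb u_{i_k j_k}$, so the whole problem reduces to writing this projection down explicitly.

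Next I would invoke the easiness hypothesis. By Definition \ref{easydef}, applied with no upper points, $\Hom(1, u^{\otimes k}) = \mathrm{span}(T_\pi : \pi \in D(k))$, where each $T_\pi$ is now a vector $\xi_\pi = T_\pi(1) \in (\C^n)^{\otimes k}$ whose $(i_1,\dotsc,i_k)$-coordinate is $\delta_\pi(i_1,\dotsc,i_k)$; this equals $1$ precisely when $\pi \leq \ker \mathbf{i}$ and $0$ otherwise. Thus $\mathrm{Fix}(u^{\otimes k})$ is spanned by the explicit family $\{\xi_\pi\}_{\pi \in D(k)}$, and computing $P$ amounts to projecting onto this span.

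The Gram matrix of these vectors is
\[
\langle \xi_\pi, \xi_\sigma\rangle = \sum_{i_1,\dotsc,i_k} \delta_\pi(\mathbf{i})\,\delta_\sigma(\mathbf{i}) = \#\{\mathbf{i} : \pi \vee \sigma \leq \ker \mathbf{i}\} = n^{|\pi \vee \sigma|},
\]
which is precisely $G_{D(k),n}(\pi,\sigma)$. For $n \geq 4$ this matrix is invertible (as recorded in the definition of $W_{D(k),n}$), so the $\xi_\pi$ are linearly independent and the orthogonal projection onto their span is given by the standard formula
\[
P = \sum_{\pi,\sigma \in D(k)} W_{D(k),n}(\pi,\sigma)\, |\xi_\pi\rangle\langle \xi_\sigma|;
\]
indeed $P\xi_\tau = \sum_{\pi,\sigma} W_{D(k),n}(\pi,\sigma)\, G_{D(k),n}(\sigma,\tau)\,\xi_\pi = \xi_\tau$, while $P$ is self-adjoint (since $W_{D(k),n}$ is symmetric) with range inside the span, so $P$ is indeed that projection. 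Reading off the $(\mathbf{i},\mathbf{j})$-entry, and using that $(\xi_\pi)_{\mathbf{i}} = 1 \Leftrightarrow \pi \leq \ker \mathbf{i}$ together with the fact that these coordinates are real, yields
\[
\int_G u_{i_1 j_1} \dotsb u_{i_k j_k} = \sum_{\substack{\pi,\sigma \in D(k) \\ \pi \leq \ker \mathbf{i} \\ \sigma \leq \ker \mathbf{j}}} W_{D(k),n}(\pi,\sigma),
\]
as claimed.

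The only genuinely deep ingredient is the identification of $P$ with the projection onto the fixed vectors, which is Woronowicz's theorem and which I would simply cite; once that is in hand the remainder is elementary. The single point requiring care is the invertibility of the Gram matrix for $n \geq 4$, but since this is already built into the definition of $W_{D(k),n}$ I may take it as given, and it is exactly what guarantees that the $\xi_\pi$ form a basis of the fixed space so that the projection formula applies. I therefore expect no serious obstacle beyond correctly matching conventions in the coordinate bookkeeping.
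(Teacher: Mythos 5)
Your argument is correct, and it is essentially the standard derivation of the Weingarten formula: the paper itself does not prove this statement but quotes it from \cite{col},\cite{bc1},\cite{bc2},\cite{bsp}, and your proof --- identifying $(\mathrm{id}\otimes\int)(u^{\otimes k})$ with the orthogonal projection onto $\mathrm{Fix}(u^{\otimes k})$, using easiness to write that space as the span of the vectors $\xi_\pi$, and inverting the Gram matrix $n^{|\pi\vee\sigma|}$ --- is exactly the argument given in those references. The only caveats are cosmetic: the leg-ordering in $(\int_G\otimes\mathrm{id})(u^{\otimes k})$ versus $(\mathrm{id}\otimes\int_G)(u^{\otimes k})$ is a convention to fix once, and the invertibility of the Gram matrix for $n\geq 4$ is indeed an input you are entitled to take from the definition of $W_{D(k),n}$ rather than something to reprove.
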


We will assume throughout the paper that $n \geq 4$, so that the Weingarten formula above is valid.  This reduces the problem of evaluating integrals over a free quantum group $G$ to computing the entries of the corresponding Weingarten matrix.  We recall the following result from \cite{bcs2}, which allows us to control the asymptotic behavior of $W_{D(k),n}$ as $n \to \infty$.

\begin{theorem}\label{West}
Let $G$ be a free quantum group with partitions $D(k) \subset NC(k)$.  Then for any $\pi,\sigma \in D(k)$ we have
\begin{equation*}
 n^{|\pi|}W_{D(k),n}(\pi,\sigma) = \mu_k(\pi,\sigma) + O(n^{-1})
\end{equation*}
as $n \to \infty$, where $\mu_k$ is the M\"{o}bius function on $NC(k)$. \qed
\end{theorem}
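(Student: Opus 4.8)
The plan is to isolate the leading-order behavior of the Gram matrix after a one-sided rescaling, recognize the limit as the zeta matrix of the poset $D(k)$, and then invert by a Neumann-series argument in fixed finite dimension. Throughout, $d = |D(k)|$ is finite and independent of $n$, so all matrices in play are $d \times d$ with $n$-independent size.

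First I would set $\Delta = \mathrm{diag}(n^{|\pi|})_{\pi \in D(k)}$ and write $\hat W = \Delta W_{D(k),n}$, so that $\hat W(\pi,\sigma) = n^{|\pi|}W_{D(k),n}(\pi,\sigma)$ is exactly the quantity in the statement and $\hat W = (G_{D(k),n}\Delta^{-1})^{-1}$. The entries of $G_{D(k),n}\Delta^{-1}$ are $n^{|\pi \vee \sigma| - |\sigma|}$. Since the join $\pi \vee \sigma$ (taken in $\mc P(k)$) is always a coarsening of $\sigma$, we have $|\pi \vee \sigma| \leq |\sigma|$, with equality precisely when $\pi \leq \sigma$; and when $\pi \not\leq \sigma$ the number of blocks drops by at least one, so the exponent is at most $-1$. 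Hence
\begin{equation*}
 G_{D(k),n}\Delta^{-1} = Z + R_n,
\end{equation*}
where $Z(\pi,\sigma) = 1$ if $\pi \leq \sigma$ and $0$ otherwise is the zeta matrix of $D(k)$, and $R_n(\pi,\sigma) = O(n^{-1})$ entrywise.

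The key combinatorial step is to show that $Z^{-1}(\pi,\sigma) = \mu_k(\pi,\sigma)$ for $\pi,\sigma \in D(k)$. This is not automatic, since $\mu_k$ is the Möbius function of the full lattice $NC(k)$ whereas $Z$ records only the order on $D(k)$; the two agree provided $D(k)$ is \emph{interval-closed}, meaning that $\pi,\sigma \in D(k)$ and $\pi \leq \rho \leq \sigma$ in $NC(k)$ force $\rho \in D(k)$. Granting this, for $\pi,\tau \in D(k)$ one computes $\sum_{\sigma \in D(k)} Z(\pi,\sigma)\mu_k(\sigma,\tau) = \sum_{\pi \leq \sigma \leq \tau} \mu_k(\sigma,\tau) = \delta_{\pi\tau}$, where the middle sum ranges over all of $[\pi,\tau] \subset NC(k)$ by interval-closedness and the last equality is the defining relation of $\mu_k$. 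I would verify interval-closedness case by case: for $S_n^+$ it is vacuous ($D(k) = NC(k)$); for $O_n^+$ the set $NC_2(k)$ is an antichain, since any strict coarsening of a pairing produces a block of size $\geq 4$, so intervals are trivial; for $H_n^+$ any coarsening of an even partition is again even, so $NC_h(k)$ is upward closed; and for $B_n^+$ any partition refined by an element of $NC_b(k)$ still has all blocks of size $\leq 2$, so $NC_b(k)$ is closed downward in the block-size sense.

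Finally I would invert. From $G_{D(k),n}\Delta^{-1} = Z(I + Z^{-1}R_n)$ and $\|Z^{-1}R_n\| = O(n^{-1})$ (a fixed $d \times d$ matrix times an $O(n^{-1})$ perturbation), the Neumann series gives $(I + Z^{-1}R_n)^{-1} = I + O(n^{-1})$ for $n$ large, whence
\begin{equation*}
 \hat W = (I + Z^{-1}R_n)^{-1}Z^{-1} = Z^{-1} + O(n^{-1}),
\end{equation*}
which reads entrywise as $n^{|\pi|}W_{D(k),n}(\pi,\sigma) = \mu_k(\pi,\sigma) + O(n^{-1})$. The main obstacle is the combinatorial lemma identifying $Z^{-1}$ with $\mu_k$ on $D(k)$: the analytic perturbation is routine precisely because the dimension $d$ is fixed, but the coincidence of the sub-poset Möbius function with that of $NC(k)$ genuinely rests on the interval-closedness of each $D(k)$, and it is this structural feature of the categories of noncrossing partitions that makes the clean limit $\mu_k$ appear.
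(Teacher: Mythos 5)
Your argument is correct and complete. Note that the paper itself does not prove Theorem \ref{West}; it is quoted from \cite{bcs2} with a \qed, and the proof there is the same standard one you give: a diagonal rescaling of the Gram matrix so that the leading term becomes the zeta matrix of $D(k)$ (or, in \cite{bcs2}, a symmetric rescaling followed by a geometric-series expansion whose leading terms reassemble into the M\"obius function via alternating sums over chains), plus a fixed-dimension perturbation argument. Your two key points are exactly the right ones: the equivalence $|\pi\vee\sigma|=|\sigma|\Leftrightarrow\pi\leq\sigma$, with a drop of at least one block otherwise, which makes the error term $O(n^{-1})$ entrywise; and the identification of $Z^{-1}$ with $\mu_k$ restricted to $D(k)\times D(k)$, which genuinely requires $D(k)$ to be convex (interval-closed) in $NC(k)$ and which you verify correctly in all four cases ($NC(k)$ itself, the antichain $NC_2(k)$, the upward-closed $NC_h(k)$, and the downward-closed $NC_b(k)$). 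One small wording slip: for $B_n^+$ you want ``any partition \emph{refining} an element of $NC_b(k)$'', i.e.\ $\rho\leq\sigma$ forces the blocks of $\rho$ to sit inside blocks of $\sigma$ of size at most $2$; the phrase ``refined by'' points the inequality the wrong way, though the conclusion you draw (downward closure) is the one you actually use. The Neumann-series inversion in fixed dimension $d=|D(k)|$ is routine, as you say, and also re-establishes invertibility of $G_{D(k),n}$ for large $n$.
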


\section{Operator-valued $R$-cyclic families}\label{sec:rcyclic}

In this section we develop some of the basic theory of operator-valued $R$-cylic families of matrices.  This generalizes some results from \cite{nss} in the scalar case.  Throughout this section, $(A,E:\mc A \to \mc B)$ will be a fixed operator-valued probability space.
 
\begin{definition}
Let $X_1,\dotsc,X_s$ be a family of matrices in $M_n(\mc A)$, $X_r = (x^{(r)}_{ij})_{1 \leq i,j \leq n}$.  We say that the family $X_1,\dotsc,X_s$ is a \textit{$\mc B$-valued $R$-cyclic family}, or \textit{$R$-cyclic with respect to $E$}, if for any $b_1,\dotsc,b_k \in \mc B$, $1 \leq r_1,\dotsc,r_k \leq s$ and $1 \leq i_1,j_1,\dotsc,i_k,j_k \leq n$ we have
\begin{equation*}
 \kappa_E^{(k)}[x^{(r_1)}_{i_kj_1}b_1,x^{(r_2)}_{i_1,j_2}b_2,\dotsc,x^{(r_k)}_{i_{k-1}j_k}b_k] = 0
\end{equation*}
unless $i_l = j_l$ for $1 \leq l \leq k$.  Equivalently, for $\sigma \in NC(k)$, $b_1,\dotsc,b_k \in \mc B$, $1 \leq r_1,\dotsc,r_k \leq s$ and $1 \leq i_{11},i_{12},\dotsc,i_{k2} \leq n$ we have
\begin{equation*}
 \kappa_E^{(\sigma)}[x^{(r_1)}_{i_{11}i_{12}}b_1,\dotsc,x^{(r_k)}_{i_{k1}i_{k2}}b_k] = 0
\end{equation*}
unless $\widetilde \sigma \leq \ker \mathbf i$, where we set $\ker \mathbf i = \ker (i_{11},i_{12},\dotsc,i_{k1},i_{k2}) \in \mc P(2k)$.
\end{definition}

Note that if $X_1,\dotsc,X_s$ is an $R$-cyclic family with respect to $E$, $X_r = (x_{ij}^{(r)})_{1 \leq i,j \leq n}$, then the $\mc B$-valued joint distribution of $(x^{(r)}_{ij})$ is determined by the ``cyclic'' cumulants
\begin{equation*}
 \kappa_E^{(k)}[x_{i_ki_1}^{(r_1)}b_1,x_{i_1i_2}^{(r_2)}b_2,\dotsc,x_{i_{k-1}i_k}^{(r_k)}b_k].
\end{equation*}
This is encoded in the \textit{$\mc B$-valued determining series} of the family $X_1,\dotsc,X_s$, which is defined to be the $\mc B$-linear map $\theta_X:\mc B \langle t^{(r)}_1,\dotsc,t^{(r)}_n: 1\leq r \leq s \rangle \to \mc B$ determined by
\begin{equation*}
 \theta_X(t_{i_1}^{(r_1)}b_1t^{(r_2)}_{i_2}b_2\dotsb t^{(r_k)}_{i_k}b_k) = \kappa_E^{(k)}[x_{i_ki_1}^{(r_1)}b_1,x^{(r_2)}_{i_1i_2}b_2,\dotsc,x_{i_{k-1}i_k}^{(r_k)}b_k].
\end{equation*}
(The terminology comes from case $\mc B = \C$, where $\theta_X$ can be expressed as a formal power series in the variables $t_i^{(r)}$, see \cite{nss},\cite{ns}).

\begin{remark}\label{rcycremark}
While $R$-cyclicity is defined in terms of the distributions of the entries of the matrices $X_1,\dotsc,X_s$, it turns out to be equivalent to a natural condition on the $\mc B$-valued distribution of $X_1,\dotsc,X_s$ in $M_n(\mc A)$.  Indeed, letting $\mc D$ denote the algebra of diagonal matrices in $M_n(\mc A)$ with entries from $\mc B$, we will show below that $X_1,\dotsc,X_s$ form an $R$-cyclic family if and only if they are free from $M_n(\mc B)$ with amalgamation over $\mc D$.

First we need to show that $R$-cyclicity is a property of the algebra $\mc C$ which is generated by $\{X_1,\dotsc,X_s\} \cup \mc D$.  In other words, $R$-cyclicity should be preserved by certain algebraic operations.  Clearly $R$-cyclicity of the family $X_1,\dotsc,X_s$ is preserved under reordering the matrices or deleting one.  Moreover,
\begin{enumerate}
 \item If $X_1,\dotsc,X_s$ are $R$-cyclic with respect to $E$, and $X$ is in the $\mc B - \mc B$ bimodule span of $X_1,\dotsc,X_s$, then $X_1,\dotsc,X_s,X$ is still $R$-cyclic.  This follows from the $\mc B-\mc B$ multilinearity of the cumulants $\kappa_E^{(k)}:\mc A^{k} \to \mc B$.
 \item If $X_1,\dotsc,X_s$ are $R$-cyclic with respect to $E$, and $D \in \mc D$ is a diagonal matrix with entries from $\mc B$, then $X_1,\dotsc,X_s,D$ is still $R$-cyclic.  This is due to the fact that a $\mc B$-valued cumulant $\kappa_E^{(k)}$ with $k \geq 2$ is zero if any of its entries are from $\mc B$.
\end{enumerate}
 
We will now show that $R$-cyclicity is also preserved under taking products.  Note that from (2) above we may first add the identity matrix to the family $X_1,\dotsc,X_s$, so that the $R$-cyclic family constructed in the lemma still contains $X_1,\dotsc,X_s$.
\end{remark}

\begin{lemma}\label{rcycproducts}
Let $(X_1,\dotsc,X_s)$ be a $\mc B$-valued $R$-cyclic family in $M_n(\mc A)$.  Then the family $(X_{r_1}\cdot X_{r_2})_{1 \leq r_1,r_2 \leq s}$ is $R$-cyclic with respect to $E$.
\end{lemma}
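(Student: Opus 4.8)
The plan is to compute the free cumulants of the entries of the products $X_{r_1}X_{r_2}$ directly and read off the $R$-cyclicity condition in its defining (cyclic) form. Fix $k$, labels $(p_1,q_1),\dotsc,(p_k,q_k)$, matrix indices and elements $c_1,\dotsc,c_k \in \mc B$, and write the $\ell$-th argument as an entry $(X_{p_\ell}X_{q_\ell})_{u_\ell v_\ell}c_\ell$. Expanding each product entry through the inner summation $(X_{p_\ell}X_{q_\ell})_{ab} = \sum_m x^{(p_\ell)}_{am}x^{(q_\ell)}_{mb}$ turns a $k$-th cumulant of product entries into
\[
\sum_{m_1,\dotsc,m_k}\kappa_E^{(k)}\bigl[x^{(p_1)}_{u_1m_1}x^{(q_1)}_{m_1v_1}c_1,\dotsc,x^{(p_k)}_{u_km_k}x^{(q_k)}_{m_kv_k}c_k\bigr],
\]
a sum over the internal indices $m_1,\dotsc,m_k$ of cumulants whose arguments are products of two entries of the original family.

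Next I would apply the operator-valued formula for free cumulants with products as entries (the amalgamated Krawczyk--Speicher expansion). Labelling the $2k$ individual entries $a_{2\ell-1}=x^{(p_\ell)}_{u_\ell m_\ell}$ and $a_{2\ell}=x^{(q_\ell)}_{m_\ell v_\ell}$, and letting $\hat 0_k=\{\{1,2\},\dotsc,\{2k-1,2k\}\}$ be the partition grouping the two factors of each product, this rewrites each inner cumulant as $\sum_\pi \kappa_E^{(\pi)}[a_1,\dotsc,a_{2k}]$, where $\pi$ ranges over $\pi\in NC(2k)$ with $\pi\vee\hat 0_k=1_{2k}$ and the $c_\ell$ are inserted by the nesting convention for $\mc B$-functionals. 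I would then invoke the hypothesis that $X_1,\dotsc,X_s$ is $R$-cyclic, in the $\kappa_E^{(\pi)}$ form: each summand vanishes unless $\widetilde\pi\le\ker\mathbf i$, where $\mathbf i$ is the length-$4k$ string of row and column indices of $a_1,\dotsc,a_{2k}$.

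The combinatorial heart of the argument, and the step I expect to be the main obstacle, is to show that these surviving constraints force exactly the cyclic condition for the products. I would read each entry $x_{ij}$ as a directed edge $i\to j$, so that product $\ell$ becomes a length-two path $u_\ell\to m_\ell\to v_\ell$, with the internal vertex $m_\ell$ linking its two edges. Unwinding the fattening, the condition $\widetilde\pi\le\ker\mathbf i$ says precisely that the edges in each block $V=(j_1<\dotsb<j_s)$ of $\pi$ close up into a directed cycle $j_1\to j_2\to\dotsb\to j_s\to j_1$ in the increasing order of the block. Using the noncrossing property of $\pi$ together with the connectivity $\pi\vee\hat 0_k=1_{2k}$, I would argue that these block-cycles, threaded through the internal vertices $m_\ell$, can only chain the product paths into a single directed cycle, i.e.\ $v_\ell=u_{\ell+1}$ cyclically --- which is the defining $R$-cyclicity relation for $(X_{r_1}X_{r_2})_{1\le r_1,r_2\le s}$. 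Noncrossingness is essential here: a crossing $\pi$ would instead permute the endpoints (e.g.\ for $k=2$ the crossing block structure yields $v_1=u_2$, $v_2=u_1$ rather than the cyclic matching), so this is exactly the place where I would lean on the fattening and Kreweras-complement identities of Lemma~\ref{fatfacts} to pin down the correct cyclic pattern. Consequently, whenever the cyclic condition fails for some $\ell$, no choice of $\pi$ and of the internal indices contributes and the cumulant vanishes, establishing the claim.
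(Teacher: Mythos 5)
Your reduction follows the paper's own proof exactly up to the decisive point: expand the product entries over the internal indices $m_\ell$, apply the cumulants-of-products formula to obtain a sum of $\kappa_E^{(\pi)}$ over $\pi\in NC(2k)$ with $\pi\vee\hat 0_k=1_{2k}$, and invoke $R$-cyclicity to restrict to those $\pi$ with $\widetilde\pi\le\ker\mathbf i$. The gap is that the combinatorial heart --- that for \emph{every} such $\pi$ these constraints, together with the automatic identifications at the internal vertices, force $v_\ell=u_{\ell+1}$ cyclically --- is only announced, not proved. ``The block-cycles can only chain the product paths into a single directed cycle'' is precisely the assertion requiring an argument, and even granting a single cycle one must still rule out the cycle visiting the $k$ products in a cyclic order other than $1,2,\dotsc,k$. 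Your one concrete check of this point is garbled: for $k=2$ the condition $v_1=u_2$, $v_2=u_1$ \emph{is} the cyclic matching (the column of each argument must equal the row of the next, cyclically), whereas the crossing partition $\{\{1,3\},\{2,4\}\}$ would instead force $v_1=u_1$ and $v_2=u_2$ --- you have the two outcomes interchanged, which suggests the combinatorics has not actually been pinned down.

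What is missing is a precise lemma replacing the heuristic. The paper supplies one: by Lemma \ref{fatfacts}, $\widetilde\pi\le\ker\mathbf i$ is equivalent to $\widetilde{K(\pi)}\le\ker(m_1,m_1,v_1,u_2,\dotsc,m_k,m_k,v_k,u_1)$; letting $\tau$ pair the positions $(4p-3,4p-2)$, which carry the two copies of $m_p$ and hence impose no new constraint, one proves the identity $\widetilde{K(\pi)}\vee\tau=\widehat{K(\pi)}$, whose blocks contain each pair $(4p-1,4p)$ and therefore force $v_p=u_{p+1}$. The hypothesis $\pi\vee\hat 0_k=1_{2k}$ enters exactly here: applying the Kreweras complement, it is equivalent to no block of $K(\pi)$ containing more than one even number, which is what guarantees that $\tau$ closes each fattened block of $K(\pi)$ up into the corresponding block of $\widehat{K(\pi)}$. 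Some such statement and proof (or an honest induction carrying out your cycle-chaining argument, with the ordering issue addressed) is needed before the proof is complete.
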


\begin{proof}
Fix $1 \leq r_{11},r_{12},\dotsc,r_{k1},r_{k2} \leq s$ and $b_1,\dotsc,b_k \in \mc B$, we must show that
\begin{multline*}
 \kappa_{E}^{(k)}[(X_{r_{11}}X_{r_{12}})_{i_{k}j_1}b_1,\dotsc,(X_{r_{k1}}X_{r_{k2}})_{i_{k-1}j_k}b_k]\\
= \sum_{1 \leq l_1,\dotsc,l_k \leq n} \kappa_E^{(k)}[x^{(r_{11})}_{i_{k}l_1}x^{(r_{12})}_{l_1j_1}b_1,\dotsc,x^{(r_{k1})}_{i_{k-1}l_k}x^{(r_{k2})}_{l_kj_{k}}b_k]
\end{multline*}
is equal to zero unless $i_1 = j_1,\dotsc,i_k = j_k$.  In fact for fixed $1 \leq l_1,\dotsc,l_k \leq n$ the term appearing above is zero unless this condition holds.  Indeed, using the formula for cumulants of products from \cite{sp2} we have
\begin{equation*}
\kappa_E^{(k)}[x^{(r_{11})}_{i_kl_1}x^{(r_{12})}_{l_1j_1}b_1,\dotsc,x^{(r_{k1})}_{i_{k-1}l_k}x^{(r_{k2})}_{l_kj_k}b_k] = \sum_{\substack{\sigma \in NC(2k)\\ \sigma \vee \hat 0_k = 1_{2k}}} \kappa_E^{(\sigma)}[x^{(r_{11})}_{i_{k}l_1},x^{(r_{12})}_{l_1j_1}b_1,\dotsc,x^{(r_{k1})}_{i_{k-1}l_k},x^{(r_{k2})}_{l_kj_k}b_k]
\end{equation*}
Now from the $R$-cyclicity condition we have 
\begin{equation*}
 \kappa_E^{(\sigma)}[x^{(r_{11})}_{i_{k}l_1},x^{(r_{12})}_{l_1j_{1}}b_1,\dotsc,x^{(r_{k1})}_{i_{k-1}l_k},x^{(r_{k2})}_{l_kj_{k}}b_k] = 0
\end{equation*}
unless $\widetilde \sigma \leq \ker (i_{k},l_1,l_1,j_{1},\dotsc,l_k,j_{k})$.  From Lemma \ref{fatfacts}, this is equivalent to
\begin{equation*}
 \widetilde{K(\sigma)} \leq \ker (l_1,l_1,j_{1},i_1,\dotsc,l_k,l_k,j_k,i_k).
\end{equation*}
Let $\tau$ be the partition $\{\{1,2\},\{3\},\{4\},\{5,6\},\{7\},\{8\},\dotsc,\{4k-3,4k-2\},\{4k-1\},\{4k\}\}$, we claim that $\widetilde{K(\sigma)} \vee \tau = \widehat{K(\sigma)}$.  The result will then follow, as 
\begin{equation*}
  \widetilde{K(\sigma)} \leq \ker (l_1,l_1,j_{1},i_1,\dotsc,l_k,l_k,j_k,i_k) \Leftrightarrow  \widetilde{K(\sigma)} \vee \tau \leq \ker (l_1,l_1,j_{1},i_1,\dotsc,l_k,l_k,j_k,i_k),
\end{equation*}
and if $\widehat{K(\sigma)} \leq \ker (l_1,l_1,j_{1},i_1,\dotsc,l_k,l_k,j_k,i_k)$ then we must have $i_1 = j_1,\dotsc,i_k = j_k$.

To prove the claim, first note that the join of any $\sigma \in NC(2k)$ with $\hat 0_k$ is noncrossing (as $\hat 0_k$ is an interval partition).  So we may apply the Kreweras complement to both sides of the equation $\sigma \vee \hat 0_k = 1_{2k}$ to see
\begin{equation*}
 \sigma \vee \hat 0_k = 1_{2k} \Leftrightarrow K(\sigma) \wedge 0_k \wr 1_k = 0_{2k}.
\end{equation*}
So if $\sigma \vee \hat 0_k = 1_{2k}$, then no block of $K(\sigma)$ may contain more than one even number.  Let $V = (l_1 < \dotsc < l_m)$ be a block of $K(\sigma)$, so that $\widetilde{K(\sigma)}$ has pairings $(2l_1-1,2l_m),(2l_1,2l_2-1),\dotsc,(2l_{m-1},2l_m-1)$.  Since $V$ contains at most one even number, $\tau$ contains all of the pairs $\{(2l_p-1,2l_p): 1 \leq p \leq m\}$, except for at most one.  But it is then clear that $\widetilde{K(\sigma)} \vee \tau$ contains the block $\{2l_1-1,2l_1,\dotsc,2l_m-1,2l_m\}$, so that $\widetilde{K(\sigma)} \vee \tau = \widehat{K(\sigma)}$ as claimed.
\end{proof}

\begin{proposition}\label{rcycalg}
Let $X_1,\dotsc,X_s$ be a $\mc B$-valued $R$-cyclic family in $M_n(\mc A)$.  Let $\mc D$ denote the algebra of diagonal matrices with entries in $\mc B$, and let $\mc C$ denote the subalgebra of $M_n(\mc A)$ which is generated by $\{X_1,\dotsc,X_s\} \cup \mc D$.  Then any finite family of matrices from $\mc C$ is $R$-cyclic with respect to $E$.
\end{proposition}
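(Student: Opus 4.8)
The plan is to bootstrap from the closure properties already established: $R$-cyclicity is preserved under deleting or reordering matrices and under passing to the $\mc B$-$\mc B$ bimodule span (Remark \ref{rcycremark}(1)), under adjoining a diagonal matrix with entries in $\mc B$ (Remark \ref{rcycremark}(2)), and under forming products (Lemma \ref{rcycproducts}). Since $\mc C$ is generated from $\{X_1,\dotsc,X_s\} \cup \mc D$ by linear combinations and products, these three properties should propagate $R$-cyclicity to all of $\mc C$.

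First I would reduce to a single finite family of \emph{words}. A finite family from $\mc C$ consists of finitely many $\C$-linear combinations of words $Y_1 \dotsb Y_m$ with each letter $Y_l \in \{X_1,\dotsc,X_s\} \cup \mc D$; only finitely many words, and hence finitely many diagonal matrices $D^{(1)},\dotsc,D^{(t)} \in \mc D$, occur. Starting from the $R$-cyclic family $X_1,\dotsc,X_s$ and adjoining $I$ and each $D^{(i)}$ one at a time, Remark \ref{rcycremark}(2) shows that the finite family $\mc F = \{X_1,\dotsc,X_s,I,D^{(1)},\dotsc,D^{(t)}\}$ is $R$-cyclic.

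Next I would close $\mc F$ under products. Lemma \ref{rcycproducts} applied to $\mc F$ shows that the family of all pairwise products $YZ$, $Y,Z \in \mc F$, is $R$-cyclic; as $I \in \mc F$, this family contains $\mc F$ and indeed every product of at most two letters. Iterating $k$ times, the family $\mc F^{(2^k)}$ of all products of exactly $2^k$ letters from $\mc F$ is $R$-cyclic, and by padding with factors of $I$ it contains every product of at most $2^k$ letters. Choosing $k$ so that $2^k$ exceeds the maximal length among the finitely many words at hand, each such word lies in the $R$-cyclic family $\mc F^{(2^k)}$, so the whole finite set of words is $R$-cyclic. Finally, each matrix of the original family lies in the $\mc B$-$\mc B$ bimodule span of these words, so by Remark \ref{rcycremark}(1) it may be adjoined to $\mc F^{(2^k)}$ one at a time without destroying $R$-cyclicity; being a subfamily of an $R$-cyclic family, it is itself $R$-cyclic.

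The one step needing genuine care — and the main obstacle — is the length mismatch built into Lemma \ref{rcycproducts}, which doubles the number of factors at each application and so never yields a product of prescribed length on the nose. The fix is exactly the observation noted at the end of Remark \ref{rcycremark}, that the identity matrix is diagonal with entries in $\mc B$ and may therefore be placed in the family: padding short words with copies of $I$ realizes every word of length $\le 2^k$ inside $\mc F^{(2^k)}$, and since inserting $I$ does not change the matrix a word represents, this is harmless.
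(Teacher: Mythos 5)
Your proof is correct and is precisely the argument the paper intends when it says the proposition "follows from combining Remark \ref{rcycremark} with Lemma \ref{rcycproducts}": adjoin the identity and the finitely many diagonal matrices via Remark \ref{rcycremark}(2), iterate Lemma \ref{rcycproducts} while padding with $I$ to handle words of unequal length, and finish with the bimodule-span closure of Remark \ref{rcycremark}(1). The padding trick you single out is exactly the observation the paper records at the end of Remark \ref{rcycremark}, so there is no divergence from the intended route.
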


\begin{proof}
This follows from combining Remark \ref{rcycremark} with Lemma \ref{rcycproducts}.
\end{proof}

Let $V_{ij}$ denote the natural matrix units in $M_n(\mc A)$, i.e. $V_{ij}$ has $(i,j)$-entry $1$ and all other entries $0$.  There are natural conditional expectations $E_{M_n(\mc B)}:M_n(\mc A) \to M_n(\mc B)$, $E_{\mc D}:M_n(A) \to \mc D$ and $E_{\mc B}:M_n(\mc A) \to \mc B$, given by the formulas
\begin{align*}
 E_{M_n(\mc B)}\bigl[(a_{ij})_{1 \leq i,j \leq n}\bigr] &= \sum_{1 \leq i,j \leq n} E[a_{ij}] \cdot V_{ij}\\
E_{\mc D}\bigl[(a_{ij})_{1 \leq i,j \leq n}\bigr] &= \sum_{i=1}^n E[a_{ii}] \cdot V_{ii}\\
E_{\mc B}\bigl[(a_{ij})_{1 \leq i,j \leq n}\bigr] &= E\bigl[\tr\bigl((a_{ij})\bigr)\bigr] = n^{-1}\sum_{i=1}^n E[a_{ii}].
\end{align*}
Note that $E_{\mc D} \circ E_{M_n(\mc B)} = E_{\mc D}$ and $E_{\mc B} \circ E_{\mc D} = E_{\mc B}$.  The following lemma connects the $\mc D$-valued distribution of a $\mc B$-valued $R$-cyclic family $X_1,\dotsc,X_s$ with the ``cyclic'' cumulants of their entries with respect to $E$.

\begin{lemma}\label{cycliccumulants}
Let $X_1,\dotsc,X_s$ be a $\mc B$-valued $R$-cyclic family in $M_n(\mc A)$.  Then for any $1 \leq i_1,\dotsc,i_{k-1} \leq n$ and $b_1,\dotsc,b_k \in \mc B$, we have
\begin{equation*}
 \kappa_{E_{\mc D}}^{(k)}[X_{r_1}b_1V_{i_1i_1},\dotsc,X_{r_{k-1}}b_{k-1}V_{i_{k-1}i_{k-1}},X_{r_k}b_k] = \sum_{1 \leq i_k \leq n} \kappa_{E}^{(k)}[x^{(r_1)}_{i_ki_1}b_1,\dotsc,x^{(r_k)}_{i_{k-1}i_k}b_k] \cdot V_{i_ki_k}.
\end{equation*}

\end{lemma}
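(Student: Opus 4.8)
The plan is to obtain the $\mc D$-valued cumulants from the $M_n(\mc B)$-valued ones, exploiting that $R$-cyclicity forces the latter to be diagonal. First I would recall the standard entrywise formula for amplified cumulants: for any $A^{(1)},\dotsc,A^{(k)} \in M_n(\mc A)$,
\begin{equation*}
\kappa_{E_{M_n(\mc B)}}^{(k)}[A^{(1)},\dotsc,A^{(k)}]_{i_0i_k} = \sum_{1 \leq i_1,\dotsc,i_{k-1} \leq n} \kappa_E^{(k)}\bigl[A^{(1)}_{i_0i_1},A^{(2)}_{i_1i_2},\dotsc,A^{(k)}_{i_{k-1}i_k}\bigr].
\end{equation*}
This is the operator-valued analogue of the computation in \cite{nss}, and follows from the moment-cumulant formula together with $E_{M_n(\mc B)}[A^{(1)}\dotsb A^{(k)}]_{i_0i_k} = \sum E[A^{(1)}_{i_0i_1}\dotsb A^{(k)}_{i_{k-1}i_k}]$ and M\"{o}bius inversion over $NC(k)$, the point being that the partition combinatorics is identical whether one works over $\mc B$ or over $M_n(\mc B)$.

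Next I would substitute $Y_l = X_{r_l}b_lV_{i_li_l}$ for $l < k$ and $Y_k = X_{r_k}b_k$. Since $(Y_l)_{ab} = x^{(r_l)}_{ab}b_l\delta_{b,i_l}$ for $l<k$, the Kronecker deltas collapse every internal sum ($i_1,\dotsc,i_{k-1}$ become fixed), giving $\kappa_{E_{M_n(\mc B)}}^{(k)}[Y_1,\dotsc,Y_k]_{i_0i_k'} = \kappa_E^{(k)}[x^{(r_1)}_{i_0i_1}b_1,\dotsc,x^{(r_k)}_{i_{k-1}i_k'}b_k]$. By the $R$-cyclicity condition the only surviving index constraint is the cyclic wrap-around $i_0 = i_k'$, so this cumulant vanishes off the diagonal; hence $\kappa_{E_{M_n(\mc B)}}^{(k)}[Y_1,\dotsc,Y_k] \in \mc D$ and, writing $i_0 = i_k' = i_k$, it equals the right-hand side of the Lemma. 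The same computation shows that every $M_n(\mc B)$-valued cumulant of matrices of the form $dX_re$ with $d,e \in \mc D$ lands in $\mc D$, after regrouping the diagonal factors as $\mc B$-insertions and applying $R$-cyclicity.

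The remaining, and main, step is to pass from $\kappa_{E_{M_n(\mc B)}}$ to $\kappa_{E_{\mc D}}$. I would establish the following general principle: writing $F\colon M_n(\mc B) \to \mc D$ for the diagonal projection, so that $E_{\mc D} = F \circ E_{M_n(\mc B)}$, if every $M_n(\mc B)$-valued cumulant of a tuple drawn from a class $\mathcal K$ closed under left and right multiplication by $\mc D$ lands in $\mc D$, then $\kappa_{E_{\mc D}}^{(j)}$ and $\kappa_{E_{M_n(\mc B)}}^{(j)}$ agree on $\mathcal K$. This I would prove by induction on $j$: for $\pi \neq 1_j$ the nested functional $\kappa^{(\pi)}$ only invokes full cumulants of order $< j$, which by the inductive hypothesis coincide in both theories and lie in $\mc D$ (each inner $\mc D$-valued block is absorbed as a diagonal multiplier into an adjacent argument, keeping us in $\mathcal K$), so $\kappa_{E_{\mc D}}^{(\pi)} = \kappa_{E_{M_n(\mc B)}}^{(\pi)}$ for all $\pi \neq 1_j$. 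Comparing the two moment-cumulant expansions of $E_{\mc D}[W_1\dotsb W_j] = F\bigl(E_{M_n(\mc B)}[W_1\dotsb W_j]\bigr)$ and cancelling these lower-order terms isolates $\kappa_{E_{\mc D}}^{(j)} = F\bigl(\kappa_{E_{M_n(\mc B)}}^{(j)}\bigr) = \kappa_{E_{M_n(\mc B)}}^{(j)}$, the last equality because the latter lies in $\mc D$. Applying this with $W_l = Y_l$ and combining with the previous paragraph yields the Lemma.

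I expect this third step to be the main obstacle, precisely because $\mc D$-valued partitioned cumulants do \emph{not} decompose as naive cyclic sums: already for $\pi = 0_2$ one computes that $\kappa_{E_{\mc D}}^{(0_2)}$ concentrates on a single diagonal index $V_{i_1i_1}$ rather than summing over all $V_{ii}$, so a term-by-term matching against cyclic $E$-cumulants fails. It is exactly the diagonality forced by $R$-cyclicity, propagated through the induction, that reconciles the $\mc D$- and $M_n(\mc B)$-valued recursions; without it the two theories genuinely differ.
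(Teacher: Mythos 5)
Your argument is correct, and it reaches the lemma by a genuinely different route than the paper. You work entirely with cumulants: first the entrywise formula for $M_n(\mc B)$-valued cumulants of matrices (which is indeed the standard operator-valued analogue of the computation in \cite{nss} and holds by the same M\"obius inversion over $NC(k)$), then the observation that $R$-cyclicity makes these cumulants diagonal, and finally a nested-subalgebra restriction principle ($\mc B\subset\mc D\subset M_n(\mc B)$: if all $M_n(\mc B)$-valued cumulants of a $\mc D$-stable class land in $\mc D$, they coincide with the $\mc D$-valued ones) --- this last step is essentially the criterion of \cite[Theorem 3.1]{nss2}, and your inductive proof of it is sound, including the base case $E_{M_n(\mc B)}[W]\in\mc D$, which follows from the $k=1$ instance of $R$-cyclicity. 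The paper instead argues at the level of moment functionals: it proves by induction on the blocks of $\sigma$ that $E_{\mc D}^{(\sigma)}[X_{r_1}b_1V_{i_1i_1},\dotsc,X_{r_k}b_kV_{i_ki_k}] = E^{(\sigma)}[x^{(r_1)}_{i_ki_1}b_1,\dotsc,x^{(r_k)}_{i_{k-1}i_k}b_k]\cdot V_{i_ki_k}$, where the needed vanishing of $E^{(\sigma)}$ when the endpoints $i_l,i_{l+s}$ of an interval block disagree is extracted from $R$-cyclicity via the Kreweras-complement relation of Lemma \ref{fatfacts}, and then applies M\"obius inversion once at the end. Your version is more modular and makes transparent that the lemma is an instance of a general change-of-subalgebra phenomenon, at the cost of importing (or reproving) the restriction principle and the entrywise cumulant formula; the paper's version is self-contained and reuses the same fattening/Kreweras combinatorics that drives the rest of the paper. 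Your closing remark about $\kappa_{E_{\mc D}}^{(0_2)}$ concentrating on a single diagonal index correctly identifies why diagonality cannot be bypassed, and is consistent with where the paper's own induction invokes $R$-cyclicity.
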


\begin{proof}
We claim that
\begin{equation*}
 E_{\mc D}^{(\sigma)}[X_{r_1}b_1V_{i_1i_1},\dotsc,X_{r_k}b_kV_{i_ki_k}] = E^{(\sigma)}[x^{(r_1)}_{i_ki_1}b_1,\dotsc,x_{i_{k-1}i_k}b_k]\cdot V_{i_ki_k}
\end{equation*}
for any $\sigma \in NC(k)$, from which the result follows by M\"{o}bius inversion.

We prove this by induction on the number of blocks of $\sigma$, the case $\sigma = 1_k$ is trivial.  So let $V = \{l+1,\dotsc,l+s\}$ be an interval of $\sigma$, then 
\begin{multline*}
 E_{\mc D}^{(\sigma)}[X_{r_1}b_1V_{i_1i_1},\dotsc,X_{r_k}b_kV_{i_ki_k}] \\
= E_{\mc D}^{(\sigma \setminus V)}[X_{r_1}b_1V_{i_1i_1},\dotsc,X_{r_l}b_lV_{i_li_l}E_{\mc D}[X_{r_{l+1}}b_{l+1}V_{i_{l+1}i_{l+1}}\dotsb X_{r_{l+s}}b_{l+s}V_{i_{l+s}i_{l+s}}],\dotsc, X_{r_k}b_kV_{i_ki_k}]\\
= \delta_{i_li_{l+s}}E_{\mc D}^{(\sigma \setminus V)}[X_{r_1}b_1V_{i_1i_1},\dotsc,X_{r_l}b_lE[x^{(r_{l+1})}_{i_{l}i_{l+1}}b_{l+1}\dotsb x^{(r_{l+s})}_{i_{l+s-1}i_{l+s}}]V_{i_li_l},\dotsc,X_{r_k}b_kV_{i_ki_k}]\\
= \delta_{i_li_{l+s}}E^{(\sigma)}[x^{(r_1)}_{i_ki_1}b_1,\dotsc,x_{i_{k-1}i_k}b_k]\cdot V_{i_ki_k},
\end{multline*}
where we have used the induction hypothesis on the last line.  

So it remains only to see that
\begin{equation*}
E^{(\sigma)}[x^{(r_1)}_{i_ki_1}b_1,\dotsc,x^{(r_k)}_{i_{k-1}i_k}b_k] = 0
\end{equation*}
if $i_l \neq i_{l+s}$.  We have
\begin{equation*}
 E^{(\sigma)}[x^{(r_1)}_{i_ki_1}b_1,\dotsc,x^{(r_k)}_{i_{k-1}i_k}b_k] = \sum_{\substack{\pi \in NC(k)\\ \pi \leq \sigma}} \kappa_E^{(\pi)}[x^{(r_1)}_{i_ki_1}b_1,\dotsc,x^{(r_k)}_{i_{k-1}i_k}b_k],
\end{equation*}
so we claim that if $\pi \leq \sigma$ then $\kappa_E^{(\pi)}[x^{(r_1)}_{i_ki_1}b_1,\dotsc,x^{(r_k)}_{i_{k-1}i_k}b_k] = 0$ unless $i_l = i_{l+s}$.  Since $X_1,\dotsc,X_s$ are $R$-cyclic with respect to $E$, we have $\kappa_E^{(\pi)}[x^{(r_1)}_{i_ki_1}b_1,\dotsc,x^{(r_k)}_{i_{k-1}i_k}b_k] = 0$ unless $\widetilde \pi \leq \ker (i_k,i_1,i_1,\dotsc,i_{k-1}i_k)$.  From Lemma \ref{fatfacts},
\begin{equation*}
\widetilde \pi \leq \ker (i_k,i_1,i_1,\dotsc,i_{k-1},i_k) \Leftrightarrow K(\pi) \leq \ker \mathbf i.
\end{equation*}
Since $\pi \leq \sigma$, we have $K(\sigma) \leq K(\pi)$.  In particular, $l$ and $l+s$ are in the same block of $K(\pi)$, and the result follows.

\end{proof}

We are now prepared to prove the main result of this section.

\begin{theorem}\label{rcyclictheorem}
Let $X_1,\dotsc,X_s$ be a family of matrices in $M_n(\mc A)$, and let $\mc C$ denote the algebra generated by $\{X_1,\dotsc,X_s\} \cup \mc D$.  Then $X_1,\dotsc,X_s$ is $R$-cyclic with respect to $E$ if and only if $\mc C$ is free from $M_n(\mc B)$ with amalgamation over $\mc D$.
\end{theorem}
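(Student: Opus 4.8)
The plan is to use the characterization of freeness with amalgamation over $\mc D$ in terms of the vanishing of mixed $\mc D$-valued cumulants (the theorem of \cite{sp1} recalled above), applied to the two algebras $\mc C$ and $M_n(\mc B)$, both of which contain the amalgamation base $\mc D$. Since $\kappa_{E_{\mc D}}$ is $\mc D$-multilinear, and since $M_n(\mc B)$ is spanned over $\mc D$ by the matrix units $V_{ij}$ (because $bV_{ij} = (bV_{ii})V_{ij}$ with $bV_{ii} \in \mc D$), while $\mc C$ is spanned over $\mc D$ by products $X_{r_1}\cdots X_{r_l}$, which by Proposition \ref{rcycalg} again form an $R$-cyclic family, it will suffice to prove that
\[
\kappa_{E_{\mc D}}^{(m)}[Z_1,\dotsc,Z_m] = 0
\]
whenever the $Z_t$ are either matrices $C_t \in \mc C$ drawn from an $R$-cyclic family or off-diagonal matrix units $V_{i_tj_t}$ ($i_t \neq j_t$), with at least one argument of each type.

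The bridge between the two conditions is a formula expressing the $\mc D$-valued cumulants of a tuple of matrices through the $\mc B$-valued cumulants of their entries. Starting from the cyclic moment expansion
\[
E_{\mc D}[Y_1\cdots Y_m] = \sum_{j_0}\Big(\sum_{j_1,\dotsc,j_{m-1}} E[(Y_1)_{j_0j_1}\cdots (Y_m)_{j_{m-1}j_0}]\Big)V_{j_0j_0}
\]
and applying M\"obius inversion over $NC(m)$, I would obtain an expression for $\kappa_{E_{\mc D}}^{(m)}[Y_1,\dotsc,Y_m]$ as a sum over $\sigma \in NC(m)$ of $\mc B$-valued cumulants $\kappa_E^{(\sigma)}$ of the cyclically arranged entries $(Y_1)_{j_0j_1},\dotsc,(Y_m)_{j_{m-1}j_0}$, the internal indices being summed subject to constraints dictated by $\sigma$. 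This generalizes Lemma \ref{cycliccumulants} to arbitrary matrices, and the correct bookkeeping is exactly what the fattening bijection and Lemma \ref{fatfacts} are designed to track: a block structure $\sigma$ of the matrices corresponds to the pair partition $\widetilde\sigma$ of the $2m$ row/column slots, while the cyclic contraction of indices plays the role of the interval partition $\hat 0_m$.

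With this formula the forward direction ($R$-cyclic $\Rightarrow$ freeness) proceeds as follows. A matrix unit $V_{i_tj_t}$ has scalar entries lying in $\mc B$, so any $\mc B$-valued cumulant $\kappa_E$ of order $\geq 2$ having such an entry as an argument vanishes; thus in each surviving term the slot occupied by a matrix unit must be a singleton block of $\sigma$, which pins its two flanking indices to $i_t$ and $j_t$ with $i_t \neq j_t$. On the other hand $R$-cyclicity forces the remaining factors (from the $C_t$'s) to vanish unless $\widetilde\sigma \leq \ker\mathbf i$, i.e. unless the surrounding indices are chained cyclically in the fattened picture. Passing between $\widetilde\sigma$ and $K(\sigma)$ via Lemma \ref{fatfacts}, one checks that the off-diagonal constraint $i_t \neq j_t$ coming from a matrix unit is incompatible with the cyclic-chaining constraint coming from $R$-cyclicity as soon as the tuple is genuinely mixed, so every term vanishes. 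The mechanism is already visible in the base computation $\kappa_{E_{\mc D}}^{(2)}[V_{ij},C] = E[(C)_{ji}]\,V_{ii}$, which vanishes precisely because the order-one $R$-cyclicity condition gives $E[(C)_{ji}] = 0$ for $i \neq j$.

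For the converse ($\Rightarrow R$-cyclic) I would run the same bridge formula in reverse, without invoking Lemma \ref{cycliccumulants}. Assuming $\mc C$ free from $M_n(\mc B)$ over $\mc D$, the vanishing of all mixed $\mc D$-valued cumulants yields, through the bridge formula, a system of linear relations among the $\mc B$-valued cumulants of the entries; by choosing the matrix-unit arguments $V_{ij}$ so as to single out a prescribed cyclic index pattern one isolates an individual entry cumulant $\kappa_E^{(\sigma)}[x^{(r_1)}_{i_{11}i_{12}}b_1,\dotsc,x^{(r_k)}_{i_{k1}i_{k2}}b_k]$ and forces it to vanish whenever $\widetilde\sigma \not\leq \ker\mathbf i$, which is exactly the $R$-cyclicity condition. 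The main obstacle throughout is establishing this bridge formula and carrying out its index/partition bookkeeping: the naive guess that $\kappa_{E_{\mc D}}^{(m)}$ is simply the free sum over internal indices of the single matching cumulant $\kappa_E^{(m)}$ is false (already for $m=2$ it differs by off-diagonal singleton contributions), and it is precisely the interplay of the fattening map with the Kreweras complement packaged in Lemma \ref{fatfacts} that organizes these corrections and makes the two families of index constraints line up.
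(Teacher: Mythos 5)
Your architecture differs from the paper's on both sides of the equivalence, and the difference matters because it concentrates all of the difficulty into a single unproven step: the ``bridge formula'' expressing $\kappa_{E_{\mc D}}^{(m)}[Y_1,\dotsc,Y_m]$ for \emph{arbitrary} matrices in terms of $\mc B$-valued cumulants of their entries. You correctly observe that the naive formula fails already at $m=2$ (the discrepancy is $\sum_{j_0\neq j_1}E[(Y_1)_{j_0j_1}]E[(Y_2)_{j_1j_0}]V_{j_0j_0}$), but you do not supply the corrected formula or its proof, and this is not routine bookkeeping: one needs $E_{\mc D}^{(\sigma)}[Y_1,\dotsc,Y_m]=\sum_{\mathbf j}E^{(\sigma)}[(Y_1)_{j_0j_1},\dotsc]\,V_{j_0j_0}$ with index constraints encoding the diagonality of the inserted inner expectations, followed by a double M\"obius inversion whose coefficients $\sum_{\sigma\geq\pi,\,K(\sigma)\leq\ker\mathbf j}\mu(\sigma,1_m)$ must then be evaluated before your singleton/incompatibility analysis can even begin. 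The paper never proves such a formula for general matrices: it needs Lemma \ref{cycliccumulants} only for the special arguments $X_rbV_{ii}$ (where the diagonal matrix units pin the indices and kill the corrections), and it establishes the forward implication by verifying the \emph{definition} of freeness on alternating $E_{\mc D}$-centered words $Y_1B_1\dotsb Y_kB_k$ --- there the centering hypotheses $E_{\mc D}[B_l]=0$ and $E_{\mc D}[Y_l]=0$ annihilate exactly the interval and singleton terms your argument would have to handle, with no bridge formula required.

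The converse direction has a second, independent gap. You propose to ``run the bridge formula in reverse'' and isolate an individual entry cumulant $\kappa_E^{(\sigma)}[x^{(r_1)}_{i_{11}i_{12}}b_1,\dotsc,x^{(r_k)}_{i_{k1}i_{k2}}b_k]$ by a judicious choice of matrix-unit arguments, but the vanishing of mixed $\mc D$-valued cumulants only yields relations among sums of entry cumulants over internal indices and over $\pi\in NC(m)$, and it is not clear that this system can be inverted argument by argument; interleaving $V_{1i}$ and $V_{j1}$ around each $X_r$ produces cumulants of \emph{products}, which freeness does not control directly without a further application of the product formula of \cite{sp2}. The paper avoids this entirely by a transport argument: it constructs an auxiliary family $Y_1,\dotsc,Y_s$ that is $R$-cyclic by fiat with the same $\mc D$-valued distribution as $X_1,\dotsc,X_s$, applies the already-proved forward direction to conclude that both families are free from $M_n(\mc B)$ over $\mc D$, deduces that they therefore have the same $M_n(\mc B)$-valued distribution, and finally compresses by matrix units to match the $\mc B$-valued distributions of the entries. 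If you wish to keep your cumulant-based outline you must at minimum prove the bridge formula with its correct coefficients; for the converse I would recommend replacing the inversion step by a transport argument of this kind.
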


\begin{proof}
Suppose that $X_1,\dotsc,X_s$ form an $R$-cyclic family with respect to $E$.  Let $Y_1,\dotsc,Y_k \in \mc C$ and $B_1,\dotsc,B_k \in M_n(\mc B)$
\begin{align*}
Y_l &= (y^{(l)}_{ij})_{1 \leq i,j \leq n} \qquad (1 \leq l \leq k)\\
B_l &= (b^{(l)}_{ij})_{1 \leq i,j \leq n} \qquad (1 \leq l \leq k).
\end{align*}
Assume that $E_{\mc D}[Y_l] = 0$ for $2 \leq l \leq k$, $E_{\mc D}[B_l] = 0$ for $1 \leq l \leq k -1$, and that at most one of $E_{\mc D}[Y_1]$ and $E_{\mc D}[B_k]$ is nonzero.  We need to show that
\begin{equation*}
 E_{\mc D}[Y_1B_1\dotsb Y_kB_k] = 0.
\end{equation*}

We have
\begin{align*}
 E_{\mc D}[Y_1B_1\dotsb Y_kB_k] &= \sum_{1 \leq i_1,\dotsc,i_{2k} \leq n} E[y^{(1)}_{i_1i_2}b^{(1)}_{i_2i_3}\dotsb y^{(k)}_{i_{2k-1}i_{2k}}b^{(k)}_{i_{2k}i_1}]\cdot V_{i_1i_1}\\
&= \sum_{1 \leq i_1,\dotsc,i_{2k} \leq n} \sum_{\sigma \in NC(k)} \kappa_E^{(\sigma)}[y^{(1)}_{i_1i_2}b^{(1)}_{i_2i_3},\dotsc,y^{(k)}_{i_{2k-1}i_{2k}}b^{(k)}_{i_{2k}i_1}]\cdot V_{i_1i_1}.
\end{align*}

Now $Y_1,\dotsc,Y_k$ are $R$-cyclic by Proposition \ref{rcycalg}, so we have
\begin{equation*}
\kappa_E^{(\sigma)}[y^{(1)}_{i_1i_2}b^{(1)}_{i_2i_3},\dotsc,y^{(k)}_{i_{2k-1}i_{2k}}b^{(k)}_{i_{2k}i_1}] = 0
\end{equation*}
unless $\widetilde \sigma \leq \ker \mathbf i$.  Suppose that $\sigma$ has an interval $V = \{l,\dotsc,l+m\}$ with $m \geq 1$.  Then $\widetilde \sigma$ contains the pair $(2l,2l+1)$, so $\widetilde \sigma \leq \ker \mathbf i$ forces $i_{2l} = i_{2l+1}$.  But $E_{\mc D}[B_l] = 0$ implies $b^{(l)}_{i_{2l}i_{2l}} = 0$, and so we have
\begin{equation*}
\kappa_E^{(\sigma)}[y^{(1)}_{i_1i_2}b^{(1)}_{i_2i_3},\dotsc,y^{(l)}_{i_{2l-1}i_{2l}}b^{(l)}_{i_{2l}i_{2l}},\dotsc ,y^{(k)}_{i_{2k-1}i_{2k}}b^{(k)}_{i_{2k}i_1}] = 0.
\end{equation*}

We are now left to consider $\sigma$ which contain no such interval.  If $k > 1$, then it follows that $\sigma$ must have a singleton $\{l\}$, $l > 1$.  But now $\sigma \leq \ker \mathbf i$ forces $i_{2l-1} = i_{2l}$, so that we have
\begin{equation*}
 E[y^{(l)}_{i_{2l-1}i_{2l}}b^{(l)}_{i_{2l}i_{2l+1}}] = E[y^{(l)}_{i_{2l}i_{2l}}]\cdot b^{(l)}_{i_{2l}i_{2l+1}} = 0,
\end{equation*}
since $E_{\mc D}[Y_l] = 0$.  It follows that
\begin{multline*}
 \kappa_E^{(\sigma)}[y^{(1)}_{i_1i_2}b^{(1)}_{i_2i_3},\dotsc,y^{(k)}_{i_{2k-1}i_{2k}}b^{(k)}_{i_{2k}i_1}]\\
= \kappa_{E}^{(\sigma \setminus \{l\})}[y^{(1)}_{i_1i_2}b^{(1)}_{i_2i_3},\dotsc,y^{(l-1)}_{i_{2l-3}i_{2l-2}}b^{(l-1)}_{i_{2l-2}i_{2l-1}} E[y^{(l)}_{i_{2l-1}i_{2l}}b^{(l)}_{i_{2l}i_{2l+1}}],\dotsc,y^{(k)}_{i_{2k-1}i_{2k}}b^{(k)}_{i_{2k}i_{1}}]
\end{multline*}
is equal to $0$.

Finally, if $k = 1$ then we are considering
\begin{equation*}
 E[y^{(1)}_{i_1i_1}]\cdot b^{(1)}_{i_1i_1} = 0,
\end{equation*}
since either $E_{\mc D}[Y_1] = 0$ or $E_{\mc D}[B_1] = 0$.  So we have proved that $\mc C$ is free from $M_n(\mc B)$ with amalgamation over $\mc D$.

Now suppose that $\mc C$ is free from $M_n(\mc B)$ with amalgamation over $\mc D$, we will show that $X_1,\dotsc,X_s$ are $R$-cyclic with respect to $E$.  Let $\{y_{ij}^{(r)}: 1 \leq i,j \leq n, 1 \leq r \leq s\}$ be random variables in a different $\mc B$-valued probability space $(\mc A', E':\mc A' \to \mc B)$ such that
\begin{equation*}
 \kappa_{E'}^{(k)}[y_{i_kj_1}^{(r_1)}b_1,\dotsc,y_{i_{k-1}j_k}^{(r_k)}b_k] = \begin{cases}
                                                                              \bigl(\kappa_{E_{\mc D}}[X_{r_1}b_1V_{i_1i_1},\dotsc,X_{r_{k}}b_kV_{i_ki_k}]\bigr)_{i_ki_k}, & i_1 = j_1,\dotsc,i_k = j_k\\
0, & \text{otherwise}
                                                                             \end{cases}.
\end{equation*}
Such a construction is always possible, see e.g. \cite{sp1}.  

For $1 \leq r \leq s$, let $Y_r = (y_{ij}^{(r)})_{1 \leq i,j \leq n} \in M_n(\mc A')$.  From Lemma \ref{cycliccumulants} we have
\begin{align*}
 \kappa_{E'_{\mc D}}^{(k)}[Y_{r_1}b_1V_{i_1i_1},\dotsc,Y_{r_{k-1}}b_{k-1}V_{i_{k-1}i_{k-1}},Y_{r_k}b_k] &= \sum_{i_k=1}^n \kappa_{E'}^{(k)}[y^{(r_1)}_{i_ki_1}b_1,\dotsc,y^{(r_k)}_{i_{k-1}i_k}b_k]\cdot V_{i_ki_k}\\
&= \kappa_{E_{\mc D}}^{(k)}[X_{r_1}b_1V_{i_1i_1},\dotsc,X_{r_{k-1}}b_{k-1}V_{i_{k-1}i_{k-1}},X_{r_k}b_k].
\end{align*}
Since $\mc D$ is spanned by elements of the form $b\cdot V_{ii}$ for $b \in \mc B$ and $1 \leq i \leq n$, it follows that $X_1,\dotsc,X_s$ and $Y_1,\dotsc,Y_s$ have the same $\mc D$-valued distribution.  

Now the family $Y_1,\dotsc,Y_s$ is $R$-cyclic with respect to $E'$ by construction, and therefore by the implication proved above, the algebra $\mc C' \subset M_n(\mc A')$ generated by $\{Y_1,\dotsc,Y_s\} \cup \mc D$ is free from $M_n(\mc B)$, with amalgamation over $\mc D$.  But this means that the distribution of the family $Y_1,\dotsc,Y_s$ with respect to $M_n(\mc B)$ is determined by its distribution with respect to $\mc D$ (see e.g. \cite{nss2}).  Likewise, since $X_1,\dotsc,X_s$ are free from $M_n(\mc B)$ with amalgamation over $\mc D$, the distribution of $X_1,\dotsc,X_s$ with respect to $M_n(\mc B)$ is determined by its distribution with respect to $\mc D$.  So since $Y_1,\dotsc,Y_s$ and $X_1,\dotsc,X_s$ have the same $\mc D$-valued distribution, they also have the same $M_n(\mc B)$-valued distribution.

But now we have
\begin{align*}
E[x^{(r_1)}_{i_1j_1}b_1\dotsb x^{(r_k)}_{i_kj_k}b_k] \cdot V_{11} &= E_{M_n(\mc B)}[V_{1i_1}X_{r_1}b_1V_{j_11}\dotsb V_{1i_k}X_{r_k}b_kV_{j_k1}]\\
&=E'_{M_n(\mc B)}[V_{1i_1}Y_{r_1}b_1V_{j_11}\dotsb V_{1i_k}Y_{r_k}b_kV_{j_k1}]\\
&= E'[y^{(r_1)}_{i_1j_1}b_1\dotsb y^{(r_k)}_{i_kj_k}b_k] \cdot V_{11}.
\end{align*}
So $(x^{(r)}_{ij})$ and $(y^{(r)}_{ij})$ have the same $\mc B$-valued distribution, and since $Y_1,\dotsc,Y_s$ are $R$-cyclic with respect to $E'$, it follows that $X_1,\dotsc,X_s$ are $R$-cyclic with respect to $E$.
\end{proof}

\subsection{Uniform $R$-cyclicity}  We have shown that $R$-cyclic families of matrices are characterized by being free from $M_n(\mc B)$, with amalgamation over $\mc D$.  Since we have $\mc B \subset \mc D \subset M_n(\mc B)$, freeness from $M_n(\mc B)$ with amalgamation over $\mc B$ is a stronger condition than freeness with amalgamation over $\mc D$.  We will now show that this stronger condition can also be characterized by a stronger $R$-cyclicity condition.

\begin{definition}
Let $X_1,\dotsc,X_s$ be a family of matrices in $M_n(\mc A)$, $X_r = (x_{ij}^{(r)})_{1 \leq i,j \leq n}$.  We say that the family $X_1,\dotsc,X_s$ is \textit{uniformly $R$-cyclic with respect to $E$} if
\begin{equation*}
 \kappa_{E}^{(k)}[x^{(r_1)}_{i_kj_1}b_1,x_{i_1j_2}b_2,\dotsc,x^{(r_k)}_{i_{k-1}j_k}b_k] = \begin{cases} \kappa_E^{(k)}[x^{(r_1)}_{11}b_1,\dotsc,x^{(r_k)}_{11}b_k], & i_1 = j_1,\dotsc,i_k = j_k\\
 0, & \text{otherwise}
\end{cases}.
\end{equation*}

\end{definition}

We will characterize uniformly $R$-cyclic families by using Theorem \ref{rcyclictheorem} and a formulation of freeness in terms of factorization of cumulants from \cite{nss2}.  In our context their result is as follows:

\begin{proposition}\cite[Theorem 3.5]{nss2}\label{cumulantfactor}
Let $X_1,\dotsc,X_s \in M_n(\mc A)$, then $\{X_1,\dotsc,X_s\}$ is free from $\mc D$ with amalgamation over $\mc B$ if and only if
\begin{align*}
\kappa_{E_{\mc D}}^{(k)}[X_{r_1}b_1V_{i_1i_1},\dotsc,X_{r_{k-1}}b_{k-1}&V_{i_{k-1}i_{k-1}},X_{r_k}b_k] \\
&= \tr\bigl(\kappa_{E_\mc D}^{(k)}[X_{r_1}\tr(b_1V_{i_1i_1}),\dotsc,X_{r_{k-1}}\tr(b_{k-1}V_{i_{k-1}i_{k-1}}),X_{r_k}b_k]\bigr)\\
&= n^{1-k}\tr(\kappa_{E_{\mc D}}^{(k)}[X_{r_1}b_1,\dotsc,X_{r_k}b_k])
\end{align*}
for any $b_1,\dotsc,b_k \in \mc B$, $1 \leq r_1,\dotsc,r_k \leq s$ and $1 \leq i_1,\dotsc,i_{k-1} \leq n$.  Equivalently,
\begin{equation*}
\kappa_{E_\mc D}^{(k)}[X_{r_1}b_1V_{i_1i_1},\dotsc,X_{r_{k-1}}b_{k-1}V_{i_{k-1}i_{k-1}},X_{r_k}b_k] = n^{1-k}\kappa_{E_{\mc B}}^{(k)}[X_{r_1}b_1,\dotsc,X_{r_k}b_k].
\end{equation*}
 \qed
\end{proposition}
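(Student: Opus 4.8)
The plan is to deduce all three displayed identities from a single equivalence, proved at the level of the generated algebra $\mc C := \mathrm{alg}(\{X_1,\dotsc,X_s\}\cup\mc B)$. The normalized trace $\tr$ restricts to a faithful conditional expectation $F\colon\mc D\to\mc B$, and one has $E_{\mc B} = F\circ E_{\mc D}$, with $\mc B$ central in $\mc D$ and $F(b V_{ii}) = n^{-1}b$. With this notation the second displayed equality in the statement is purely formal (pull the scalars $\tr(b_lV_{i_li_l}) = n^{-1}b_l$ out of the multilinear cumulant), and the final ``Equivalently'' reduces to the assertion that $\kappa_{E_{\mc D}}^{(k)}[X_{r_1}b_1,\dotsc,X_{r_k}b_k]$ already lies in $\mc B$ and equals $\kappa_{E_{\mc B}}^{(k)}[X_{r_1}b_1,\dotsc,X_{r_k}b_k]$. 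So the whole proposition follows once I establish, for $c_1,\dotsc,c_k\in\mc C$ and $d_1,\dotsc,d_{k-1}\in\mc D$, the equivalence of (a) $\mc C$ is free from $\mc D$ with amalgamation over $\mc B$, and (b) $\kappa_{E_{\mc D}}^{(k)}[c_1d_1,\dotsc,c_{k-1}d_{k-1},c_k] = \kappa_{E_{\mc B}}^{(k)}[c_1F(d_1),\dotsc,c_{k-1}F(d_{k-1}),c_k]$; the stated form is the specialization $c_l = X_{r_l}b_l$, $d_l = V_{i_li_l}$.

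For (a)$\Rightarrow$(b) I would first record that freeness forces $E_{\mc D}|_{\mc C} = E_{\mc B}|_{\mc C}$: writing $c = E_{\mc B}(c) + c^\circ$ for $c\in\mc C$ and pairing against an arbitrary $d\in\mc D$, the chain $F(E_{\mc D}(c)d) = E_{\mc B}(cd) = E_{\mc B}(c)F(d)$ (the last step using the vanishing of the length-two mixed centered moment) together with faithfulness of $F$ gives $E_{\mc D}(c) = E_{\mc B}(c)\in\mc B$. The heart of the argument is then the following moment computation: for a word $w = c_1d_1c_2\dotsb c_{k-1}d_{k-1}c_k$ whose $\mc D$-insertions are all interior (each $d_l$ flanked by factors from $\mc C$), $E_{\mc D}[w]$ lies in $\mc B$, equals $E_{\mc B}[w]$, and depends on the $d_l$ only through $F(d_l)$. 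To see this I pair with an arbitrary $d\in\mc D$, so that $\tr(E_{\mc D}[w]\,d) = E_{\mc B}[wd]$, and expand the right-hand side by the alternating free-moment formula over $\mc B$ (equivalently by centering each factor and invoking the theorem of \cite{sp1} on the vanishing of mixed $\mc B$-cumulants): because each interior insertion is flanked by $\mc C$-factors, non-crossingness forces it into a singleton block of the relevant Kreweras complement, so that it enters only through $F(d_l)$; faithfulness of $F$ then pins down $E_{\mc D}[w]$. Finally I pass from moments to cumulants by M\"obius inversion, using the $\mc D$-bimodule property to pull a boundary $\mc D$-factor out of each sub-moment via $E_{\mc D}[c_{l+1}d_{l+1}\dotsb c_{l+s}d_{l+s}] = E_{\mc D}[c_{l+1}d_{l+1}\dotsb c_{l+s}]\,d_{l+s}$, which returns every sub-word occurring in the recursion to the interior-insertion form just treated. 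I expect this bookkeeping --- tracking how the raw boundary insertions reassemble while the interior ones are replaced by their $F$-averages --- to be the main obstacle, and it is where Lemma \ref{fatfacts} and the non-crossing combinatorics do the real work.

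For the converse (b)$\Rightarrow$(a) I would avoid re-expanding cumulants and argue by comparison, exactly as in the proof of Theorem \ref{rcyclictheorem}. Since a $\mc B$-valued cumulant with an entry from the subalgebra $\mc D$ vanishes in order $\geq 2$, and $\kappa_{E_{\mc D}}$ is $\mc D$-$\mc D$-multilinear, every $\mc D$-valued cumulant of a word in $\{X_1,\dotsc,X_s\}\cup\mc D$ collapses to a cumulant of the type appearing in (b); hence (b) shows that the $\mc B$-valued distribution of $(X_1,\dotsc,X_s)$ together with the inclusion $\mc B\subset\mc D$ completely determines the $\mc D$-valued joint distribution of $\{X_1,\dotsc,X_s\}\cup\mc D$. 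I would then realize a model family $X_1',\dotsc,X_s'$ with the same $\mc B$-valued distribution but chosen free from $\mc D$ with amalgamation over $\mc B$ (a free product with amalgamation \cite{voi0}). By the already-proved implication (a)$\Rightarrow$(b) the model satisfies (b); since both families satisfy (b) and share the same $\mc B$-valued distribution, they have the same $\mc D$-valued joint distribution. As freeness from $\mc D$ over $\mc B$ is a property of that $\mc D$-valued distribution, it transfers to the original family, yielding (a).
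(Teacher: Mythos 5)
The paper offers no proof of this proposition at all: it is imported verbatim from \cite[Theorem 3.5]{nss2} and closed with a \qed, so your attempt has to stand entirely on its own. Much of your scaffolding is fine --- the second displayed equality really is formal multilinearity, the identity $E_{\mc D}|_{\mc C}=E_{\mc B}|_{\mc C}$ under freeness is correctly derived (and can be had without faithfulness by testing against $d=V_{jj}$), and the transport argument for the converse, modelled on the proof of Theorem \ref{rcyclictheorem}, is acceptable once the forward implication is available.

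The gap is precisely in the step you call the heart of the argument. The claim that for an interior-insertion word $w=c_1d_1c_2\dotsb d_{k-1}c_k$ the moment $E_{\mc D}[w]$ lies in $\mc B$ and depends on the $d_l$ only through $F(d_l)$ is false. Already for $k=2$ the freeness hypothesis gives
\begin{equation*}
E_{\mc D}[c_1V_{jj}c_2]\;=\;E_{\mc B}(c_1)E_{\mc B}(c_2)\,V_{jj}\;+\;n^{-1}\kappa^{(2)}_{E_{\mc B}}[c_1,c_2],
\end{equation*}
which is not in $\mc B$ and genuinely depends on $j$ whenever $E_{\mc B}(c_1)E_{\mc B}(c_2)\neq 0$ (the degenerate case $c_1=c_2=1$, where $E_{\mc D}[V_{jj}]=V_{jj}$, already refutes the claim). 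The combinatorial assertion underlying it --- that non-crossingness forces each interior insertion into a singleton block of the Kreweras complement --- also fails: for $\pi=0_k$ one has $K(0_k)=1_k$, which couples all the $\mc D$-insertions into one block, and this is exactly what produces the offending term $E_{\mc B}(c_1)F\bigl(d_1^{\circ}E_{\mc B}(c_2)d^{\circ}\bigr)$ in the pairing $E_{\mc B}[c_1d_1c_2d]$. The factorization through $F(d_l)$ holds only at the level of \emph{cumulants}, where these lower-order contributions cancel; establishing that cancellation is the actual content of \cite[Theorem 3.5]{nss2}, and it is precisely what you defer as ``bookkeeping''. (Lemma \ref{fatfacts}, which concerns the fattening bijection $NC(k)\to NC_2(2k)$, has no bearing on this point.) As it stands, the forward implication is not proved.
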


We now show that freeness from $M_n(\mc B)$ with amalgamation over $\mc B$ is characterized by uniform $R$-cyclicity, which establishes the equivalence of (2) and (3) in Theorem \ref{orthogonalcase}.

\begin{theorem}\label{uniformrcyclictheorem}
Let $X_1,\dotsc,X_s$ be a family of matrices in $M_n(\mc A)$, $X_r = (x^{(r)}_{ij})_{1 \leq i,j \leq n}$.  Then the family $X_1,\dotsc,X_s$ is uniformly $R$-cyclic with respect to $E$ if and only if $\{X_1,\dotsc,X_s\}$ is free from $M_n(\mc B)$ with amalgamation over $\mc B$.
\end{theorem}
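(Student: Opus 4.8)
The plan is to read \emph{uniform} $R$-cyclicity as the conjunction of two separate conditions and to match each with a freeness statement already available. First, uniform $R$-cyclicity trivially contains ordinary $R$-cyclicity: the ``vanishing off the cyclic diagonal'' clause is literally the $R$-cyclicity condition, which by Theorem \ref{rcyclictheorem} is equivalent to $\mc C$ being free from $M_n(\mc B)$ with amalgamation over $\mc D$. What uniform $R$-cyclicity adds on top of $R$-cyclicity is the assertion that the cyclic cumulants $\kappa_E^{(k)}[x_{i_ki_1}^{(r_1)}b_1,\dotsc,x_{i_{k-1}i_k}^{(r_k)}b_k]$ do not depend on the indices $i_1,\dotsc,i_k$. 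Via Lemma \ref{cycliccumulants} this index-independence translates into a statement purely about the $\mc D$-valued cumulants of $X_1,\dotsc,X_s$, which I will match with Proposition \ref{cumulantfactor}. Thus the theorem reduces to the transitivity statement that $\{X_1,\dotsc,X_s\}$ is free from $M_n(\mc B)$ over $\mc B$ if and only if $\mc C$ is free from $M_n(\mc B)$ over $\mc D$ and $\{X_1,\dotsc,X_s\}$ is free from $\mc D$ over $\mc B$.

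To carry out the reduction I would feed the uniform $R$-cyclicity hypothesis into Lemma \ref{cycliccumulants}. Since under uniform $R$-cyclicity each summand $\kappa_E^{(k)}[x_{i_ki_1}^{(r_1)}b_1,\dotsc,x_{i_{k-1}i_k}^{(r_k)}b_k]$ equals the constant $\kappa_E^{(k)}[x_{11}^{(r_1)}b_1,\dotsc,x_{11}^{(r_k)}b_k] \in \mc B$, summing $V_{i_ki_k}$ over $i_k$ collapses the right-hand side of Lemma \ref{cycliccumulants} to $\kappa_E^{(k)}[x_{11}^{(r_1)}b_1,\dotsc,x_{11}^{(r_k)}b_k]$ times the identity matrix, independently of $i_1,\dotsc,i_{k-1}$. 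This is exactly the shape of the right-hand side in Proposition \ref{cumulantfactor}: a $\mc B$-multiple of the identity that does not see the indices $i_1,\dotsc,i_{k-1}$. Conversely, if the factorization of Proposition \ref{cumulantfactor} holds and the family is already $R$-cyclic, then reading Lemma \ref{cycliccumulants} backwards forces the cyclic cumulants to be index-independent, i.e. uniform $R$-cyclicity. The only bookkeeping is to verify that the constant produced is precisely $n^{1-k}\kappa_{E_{\mc B}}^{(k)}[X_{r_1}b_1,\dotsc,X_{r_k}b_k]$, which is a routine normalization computation using $E_{\mc B} = \tr\circ E_{\mc D}$ and the partial-trace identity $\tr(bV_{ii}) = n^{-1}b$. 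This step establishes: \emph{uniform $R$-cyclic} $\Longleftrightarrow$ (\emph{$R$-cyclic} and \emph{$\{X_1,\dotsc,X_s\}$ free from $\mc D$ over $\mc B$}).

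It remains to fuse the two freeness conditions, and this is where the real work lies. One implication is soft: if $\{X_1,\dotsc,X_s\}$ is free from $M_n(\mc B)$ over $\mc B$, then restricting to the subalgebra $\mc D \subset M_n(\mc B)$ gives freeness from $\mc D$ over $\mc B$, while the standard fact that one may absorb a subalgebra of one free factor into the algebra of amalgamation yields that $\mc C = \{X_1,\dotsc,X_s\} \vee \mc D$ is free from $M_n(\mc B)$ over the enlarged subalgebra $\mc D$; together with Theorem \ref{rcyclictheorem} and Proposition \ref{cumulantfactor} this recovers uniform $R$-cyclicity. The hard direction is the converse: assuming $\mc C$ free from $M_n(\mc B)$ over $\mc D$ \emph{and} $\{X_1,\dotsc,X_s\}$ free from $\mc D$ over $\mc B$, I must conclude freeness from $M_n(\mc B)$ over the smaller algebra $\mc B$. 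I would prove this by showing that the mixed $E_{\mc B}$-cumulants of $\{X_1,\dotsc,X_s\}$ and $M_n(\mc B)$ vanish, expanding each such cumulant through the chain rule relating cumulants over $\mc B$ to those over $\mc D$ (corresponding to the factorization $E_{\mc B} = \tr\circ E_{\mc D}$): the inner $E_{\mc D}$-cumulants vanish on mixed arguments by the first hypothesis, while the residual $\mc D$-valued quantities, once fed into the outer $\tr$-cumulants, vanish on the remaining mixed $\mc D$/$M_n(\mc B)$ arguments by the second hypothesis. Controlling this nested expansion, i.e. making precise the tower/associativity of freeness with amalgamation along $\mc B \subset \mc D \subset M_n(\mc B)$, is the main obstacle; everything else is formal manipulation of the cumulant formulas of \cite{sp1} and \cite{nss2} together with Lemma \ref{cycliccumulants}.
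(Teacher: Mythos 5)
Your proposal is correct and follows essentially the same route as the paper: the same three-way decomposition via Theorem \ref{rcyclictheorem}, Lemma \ref{cycliccumulants} and Proposition \ref{cumulantfactor}, reducing everything to the equivalence of freeness from $M_n(\mc B)$ over $\mc B$ with the conjunction of freeness from $M_n(\mc B)$ over $\mc D$ and freeness from $\mc D$ over $\mc B$. The only divergence is that the transitivity step you flag as ``the main obstacle'' is not reproved in the paper but simply quoted as Proposition 3.7 of \cite{nss2}, so the nested cumulant expansion you sketch, while a reasonable way to establish it, is unnecessary work.
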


\begin{proof}
First we claim that if $X_1,\dotsc,X_s$ is $R$-cyclic with respect to $E$, then it is uniformly $R$-cyclic if and only if $\{X_1,\dotsc,X_s\}$ is free from $\mc D$ with amalgamation over $\mc B$.  Indeed, from Lemma \ref{cycliccumulants} we have
\begin{equation*}
\kappa_{E_\mc D}^{(k)}[X_{r_1}b_1V_{i_1i_1},\dotsc,X_{r_{k-1}}b_{k-1}V_{i_{k-1}i_{k-1}},X_{r_k}b_k] = \sum_{i_k=1}^n \kappa_E^{(k)}[x^{(r_1)}_{i_ki_1}b_1,\dotsc,x^{(r_k)}_{i_{k-1}i_k}b_k] \cdot V_{i_{k}i_k}.
\end{equation*}
Now if $\{X_1,\dotsc,X_s\}$ is free from $\mc D$ with amalgamation over $\mc B$, then combining this equation with Proposition \ref{cumulantfactor} we have
\begin{equation*}
 \kappa_E^{(k)}[x^{(r_1)}_{i_ki_1}b_1,\dotsc,x^{(r_k)}_{i_{k-1}i_k}b_k] = n^{1-k}\kappa_{E_{\mc B}}^{(k)}[X_{r_1}b_1,\dotsc,X_{r_k}b_k].
\end{equation*}
Since the right hand side does not depend on the indices $i_1,\dotsc,i_k$, $X$ is uniformly $R$-cyclic as claimed.  Conversely if $X$ is uniformly $R$-cyclic then
\begin{align*}
\kappa_{E_\mc D}^{(k)}[X_{r_1}b_1V_{i_1i_1},\dotsc,b_{k-1}&V_{i_{k-1}i_{k-1}},X_{r_k}b_k]\\
&= \sum_{j_k=1}^n\kappa_{E}^{(k)}[x^{(r_1)}_{j_ki_1}b_1,\dotsc,x^{(r_k)}_{i_{k-1}j_k}b_k]\cdot V_{j_kj_k}\\
&= \kappa_{E}^{(k)}[x^{(r_1)}_{11}b_1,\dotsc,x^{(r_k)}_{11}b_k]\\
&= n^{1-k} \sum_{1 \leq j_1,\dotsc,j_{k-1} \leq n} \kappa_{E}^{(k)}[x^{(r_1)}_{1j_1}b_1,\dotsc,x^{(r_k)}_{j_{k-1}1}b_k]\\
&= n^{1-k}\sum_{1 \leq j_1,\dotsc,j_{k-1} \leq n}\tr\bigl(\kappa_{E_\mc D}^{(k)}[X_{r_1}b_1V_{j_1j_1},\dotsc,X_{r_{k-1}}b_{k-1}V_{j_{k-1}j_{k-1}},X_{r_k}b_k])\\
&= n^{1-k}\tr\bigl(\kappa_{E_{\mc D}}^{(k)}[X_{r_1}b_1,\dotsc,X_{r_k}b_k]\bigr).
\end{align*}
The claim then follows from Proposition \ref{cumulantfactor}.

Now suppose that $X_1,\dotsc,X_s$ is uniformly $R$-cyclic with respect to $E$.  By Theorem \ref{rcyclictheorem}, $\{X_1,\dotsc,X_s\}$ is free from $M_n(\mc B)$ with amalgamation over $\mc D$.  By the above claim, $\{X_1,\dotsc,X_s\}$ is free from $\mc D$ with amalgamation over $\mc B$.  It then follows from \cite[Proposition 3.7]{nss2} that $\{X_1,\dotsc,X_s\}$ is free from $M_n(\mc B)$ with amalgamation over $\mc B$.

Conversely, if $\{X_1,\dotsc,X_s\}$ is free from $M_n(\mc B)$ with amalgamation over $\mc B$, then it is also free from $M_n(\mc B)$ with amalgamation over $\mc D$ and hence $R$-cyclic by Theorem \ref{rcyclictheorem}.  But since $\{X_1,\dotsc,X_s\}$ is free from $\mc D$ with amalgamation over $\mc B$, it follows from the above claim that $X_1,\dotsc,X_s$ is uniformly $R$-cyclic with respect to $E$.
\end{proof}

We will now prove Theorem \ref{infdivis}, which relates uniform $R$-cyclicity and free infinite divisibility.  First we prove a version of that theorem which holds for finite matrices.

\begin{theorem}\label{finitedivis}
Let $\mc A$ be a unital C$^*$-algebra, $1 \in \mc B \subset \mc A$ a C$^*$-subalgebra and $E:\mc A \to \mc B$ a faithful completely positive conditional expectation.  Let $y_1,\dotsc,y_s \in \mc A$, then the following conditions are equivalent:
\begin{enumerate}
 \item There is a unital C$^*$-algebra $\mc A'$, a unital inclusion $\mc B \hookrightarrow \mc A'$, a faithful completely positive conditional expectation $E':\mc A' \to \mc B$, and a family $\{x_{ij}^{(r)}:1 \leq i,j \leq n, 1 \leq r \leq s\} \subset \mc A'$ such that
\begin{itemize}
\item $(x_{11}^{(1)},\dotsc,x_{11}^{(s)})$ has the same $\mc B$-valued distribution as $(y_1,\dotsc,y_s)$.
\item $X_1,\dotsc,X_s$ forms a $\mc B$-valued uniformly $R$-cyclic family, where $X_r = (x_{ij}^{(r)})_{1 \leq i,j \leq n}$ for $1 \leq r \leq s$.
\end{itemize}
\item The $\mc B$-valued joint distribution of $(y_1,\dotsc,y_s)$ is $n$-times freely divisible, i.e. there exists a unital C$^*$-algebra $\mc A_n$, a unital inclusion $\mc B \hookrightarrow \mc A_n$, a faithful completely positive conditional expectation $E_n:\mc A_n \to \mc B$, and a family $\{x_r^{(i)}: 1 \leq i \leq n, 1 \leq r \leq s\}$ such that 
\begin{itemize}
\item The families $\{y_1^{(1)},\dotsc,y_s^{(1)}\},\dotsc,\{y_{1}^{(n)},\dotsc,y_s^{(n)}\}$ are freely independent with respect to $E_n$.
\item The $\mc B$-valued distribution of $(y_1^{(i)},\dotsc,y_s^{(i)})$ does not depend on $1 \leq i \leq n$.
\item $(y_1,\dotsc,y_s)$ has the same $\mc B$-valued distribution as $(y'_1,\dotsc,y'_s)$, where $y'_r = y_r^{(1)} + \dotsb + y_r^{(n)}$ for $1 \leq r \leq s$.
\end{itemize}
\end{enumerate}
  
\end{theorem}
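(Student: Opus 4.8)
The plan is to reduce both implications to a single cumulant identity relating the $(1,1)$-entry of the matrices to the matrices themselves equipped with their $\mc B$-valued expectation, and then to translate the resulting scaling of free cumulants into a statement about free convolution powers.

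\emph{The key identity.} First I would record the following consequence of the work already done for Theorem \ref{uniformrcyclictheorem}. If $X_1,\dots,X_s$ is uniformly $R$-cyclic with respect to $E'$, then by Theorem \ref{uniformrcyclictheorem} the family $\{X_1,\dots,X_s\}$ is free from $M_n(\mc B)$ with amalgamation over $\mc B$; combining Lemma \ref{cycliccumulants} with Proposition \ref{cumulantfactor} exactly as in that proof yields, for all $k$ and $b_1,\dots,b_k \in \mc B$,
\begin{equation*}
\kappa_{E'}^{(k)}[x_{11}^{(r_1)}b_1,\dots,x_{11}^{(r_k)}b_k] = n^{1-k}\,\kappa_{E_{\mc B}}^{(k)}[X_{r_1}b_1,\dots,X_{r_k}b_k],
\end{equation*}
where $E_{\mc B} = E'\circ\tr$. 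Conversely, a family free from $M_n(\mc B)$ over $\mc B$ is uniformly $R$-cyclic and satisfies the same identity. This is the whole engine: the $\mc B$-cumulants of $X_r$ for $E_{\mc B}$ are those of its corner $x_{11}^{(r)}$ multiplied by $n^{k-1}$.

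\emph{The implication $(1)\Rightarrow(2)$.} Given a uniformly $R$-cyclic family with $x_{11}^{(r)}$ distributed as $y_r$, I would let $\nu$ be the joint $\mc B$-distribution of $(\tfrac1n X_1,\dots,\tfrac1n X_s)$ in $(M_n(\mc A'),E_{\mc B})$; pulling the scalar $\tfrac1n$ out of each of the $k$ entries, the identity above gives $\kappa^{(k)}_{\nu} = \tfrac1n\,\kappa^{(k)}_{(y_1,\dots,y_s)}$. Thus $n$ freely independent copies of $\nu$ have summed cumulants equal to those of $(y_1,\dots,y_s)$. Concretely, I would take $\mc A_n$ to be the reduced amalgamated free product over $\mc B$ of $n$ copies of $(M_n(\mc A'),E_{\mc B})$, with its faithful completely positive conditional expectation $E_n$, and let $(y_1^{(i)},\dots,y_s^{(i)})$ be the image of $(\tfrac1n X_1,\dots,\tfrac1n X_s)$ in the $i$-th copy. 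These families are free and identically distributed, and vanishing of mixed cumulants gives $\kappa^{(k)}[\sum_i y_{r_1}^{(i)}b_1,\dots,\sum_i y_{r_k}^{(i)}b_k] = \kappa^{(k)}_{(y_1,\dots,y_s)}$, so $\sum_i y_r^{(i)}$ is distributed as $y_r$, which is (2). It is worth flagging that the naive idea of using the diagonal entries $x_{ii}^{(r)}$ fails here: those are $n$ free copies of $\mu_y$ itself, and their sum realizes $\mu_y^{\boxplus n}$ rather than $\mu_y$; the correct $n$-th root is the rescaled matrix.

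\emph{The implication $(2)\Rightarrow(1)$.} Here I would run the computation backwards. Writing $\nu$ for the common $\mc B$-distribution of the $(y_1^{(i)},\dots,y_s^{(i)})$, additivity of cumulants gives $\kappa^{(k)}_{(y_1,\dots,y_s)} = n\,\kappa^{(k)}_{\nu}$. I then need matrices $X_1,\dots,X_s \in M_n$, free from $M_n(\mc B)$ over $\mc B$, whose $E_{\mc B}$-distribution has cumulants $n^{k}\kappa^{(k)}_\nu$ — that is, equal to the distribution $\rho$ of $(n\,y_1^{(1)},\dots,n\,y_s^{(1)})$. To build them I would realize $\rho$ by self-adjoint $z_1,\dots,z_s$ in some $(\mc C,E_{\mc C}\colon\mc C\to\mc B)$, form the reduced amalgamated free product $\widehat{\mc A}=\mc C *_{\mc B} M_n(\mc B)$ in which the $z_r$ are free from $M_n(\mc B)$ over $\mc B$, and use that $\widehat{\mc A}$ carries the matrix units of $M_n(\mc B)$ with $\sum_i V_{ii}=1$, so that $\widehat{\mc A}\cong M_n(\mc A')$ with $\mc A'=V_{11}\widehat{\mc A}V_{11}$ and $E_{\mc B}=E'\circ\tr$. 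Under this identification $z_r$ becomes a matrix $X_r$, free from $M_n(\mc B)$ over $\mc B$, hence uniformly $R$-cyclic by Theorem \ref{uniformrcyclictheorem}, with $E_{\mc B}$-distribution $\rho$. The key identity then gives $\kappa^{(k)}_{E'}[x_{11}^{(r_1)}b_1,\dots,x_{11}^{(r_k)}b_k]=n^{1-k}\cdot n^{k}\kappa^{(k)}_\nu=n\,\kappa^{(k)}_\nu=\kappa^{(k)}_{(y_1,\dots,y_s)}$, so $(x_{11}^{(1)},\dots,x_{11}^{(s)})$ is distributed as $(y_1,\dots,y_s)$, which is (1).

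\emph{The main obstacle.} The one genuinely delicate point is the bookkeeping of scalings in the two middle paragraphs: one must track how the factor $n^{1-k}$ in the key identity interacts with the $k$-fold homogeneity of cumulants under dilation and with the additivity $\kappa\mapsto n\kappa$ under free convolution, and verify that this forces the $n$-th root to be $D_{1/n}\mu_X$ rather than $\mu_X$ or $\mu_y$. The remaining ingredients are standard operator-algebraic constructions — existence of reduced amalgamated free products of C$^*$-algebras carrying faithful completely positive conditional expectations, and the identification $\widehat{\mc A}\cong M_n(V_{11}\widehat{\mc A}V_{11})$ once a full system of matrix units is present, under which faithfulness and complete positivity of $E_{\mc B}=E'\circ\tr$ descend to $E'$.
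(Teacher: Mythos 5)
Your proposal is correct and follows essentially the same route as the paper: both implications rest on the scaling identity $\kappa_{E'}^{(k)}[x_{11}^{(r_1)}b_1,\dotsc,x_{11}^{(r_k)}b_k]=n^{1-k}\kappa_{E_{\mc B}}^{(k)}[X_{r_1}b_1,\dotsc,X_{r_k}b_k]$ extracted from the proof of Theorem \ref{uniformrcyclictheorem}, with $(1)\Rightarrow(2)$ obtained by summing $n$ free copies of $(n^{-1}X_1,\dotsc,n^{-1}X_s)$ and $(2)\Rightarrow(1)$ by realizing $(ny_1^{(1)},\dotsc,ny_s^{(1)})$ free from $M_n(\mc B)$ over $\mc B$ via an amalgamated free product with $M_n(\mc B)$ and compression by a matrix unit (the paper writes this as $x_{ij}^{(r)}=n\,e_{1i}y_r^{(1)}e_{j1}$, which is exactly your identification $\widehat{\mc A}\cong M_n(V_{11}\widehat{\mc A}V_{11})$). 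The bookkeeping of the powers of $n$ in your argument matches the paper's.
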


\begin{proof}
First assume that (1) holds.  Let $\{y_r^{(i)}: 1 \leq i \leq n, 1 \leq r \leq s\}$ be a family of random variables in a unital C$^*$-algebra $\mc A_n$ which contains $\mc B$ as a unital C$^*$-subalgebra, with a faithful completely positive conditional expectation $E_n:\mc A_n \to \mc B$ such that
\begin{itemize}
\item The families $\{y_1^{(1)},\dotsc,y_s^{(1)}\},\dotsc,\{y_{1}^{(n)},\dotsc,y_s^{(n)}\}$ are free with amalgamation over $\mc B$.
\item The $\mc B$-valued distribution of $(y_1^{(i)},\dotsc,y_s^{(i)})$ is the same as $(n^{-1}X_1,\dotsc,n^{-1}X_s)$.
\end{itemize}
Here one may take $\mc A_n$ to be the reduced free product of $n$ copies of $M_n(\mc A)$ with amalgamation over $\mc B$, see \cite{voi0}.  

For $1 \leq r \leq s$ let $y_r' = y_r^{(1)} + \dotsb + y_r^{(n)}$.  We claim that $(y_1,\dotsc,y_s)$ has the same $\mc B$-valued joint distribution as $(y'_1,\dotsc,y'_s)$.  Indeed, from the proof of Theorem \ref{uniformrcyclictheorem}, for any $1 \leq r_1,\dotsc,r_k \leq n$ and $b_1,\dotsc,b_k \in \mc B$ we have
\begin{align*}
 \kappa^{(k)}[y_{r_1}b_1,\dotsc,y_{r_k}b_k] &= \kappa_E^{(k)}[x_{11}^{(r_1)}b_1,\dotsc,x_{11}^{(r_k)}b_k]\\
&= n^{1-k}\kappa_{E_{\mc B}}^{(k)}[X_{r_1}b_1,\dotsc,X_{r_k}b_k]\\
&= n \cdot \kappa_{E_{\mc B}}^{(k)}[(n^{-1}X_{r_1})b_1,\dotsc,(n^{-1}X_{r_k})b_k].
\end{align*}
On the other hand, since $\{y_1^{(1)},\dotsc,y_s^{(1)}\},\dotsc,\{y_{1}^{(n)},\dotsc,y_s^{(n)}\}$ are free with amalgamation over $\mc B$, we have
\begin{align*}
 \kappa_{E_n}^{(k)}[y'_{r_1}b_1,\dotsc,y'_{r_k}b_k] &= \sum_{i=1}^n \kappa_{E_n}[y_{r_1}^{(i)}b_1,\dotsc,y_{r_k}^{(i)}b_k]\\
&= n\cdot \kappa_{E_{\mc B}}^{(k)}[(n^{-1}X_{r_1})b_1,\dotsc,(n^{-1}X_{r_k})b_k].
\end{align*}
This proves the implication (1)$\Rightarrow$(2).

Conversely, suppose that (2) holds.  By replacing $\mc A_n$ with the reduced free product of $\mc A_n$ and $M_n(\mc B)$, with amalgamation over $\mc B$, we make find a system of matrix units $(e_{ij})_{1 \leq i,j \leq n}$ which commute with $\mc B$ and are free from $\{y_1^{(1)},\dotsc,y_s^{(1)}\}$ with amalgamation over $\mc B$, and such that $E[e_{ij}] = \delta_{ij}n^{-1}$ .  Let $p$ be the projection $e_{11}$ and consider the compressed C$^*$-algebra $p\mc Ap$, with conditional expectation $E_p:p\mc A p \to \mc Bp$ defined by $E_p[pap] = n\cdot E[a]\cdot p$.  Note that $b \mapsto b p$ is a unital inclusion of $\mc B$ into $p\mc A p$.

For $1 \leq i,j \leq n$ and $1 \leq r \leq s$, let $x_{ij}^{(r)} = n\cdot e_{1i}y_r^{(1)}e_{j1} \in p\mc A p$.  For $r =1,\dotsc,s$, let $X_r = (x_{ij}^{(r)})_{1 \leq i, j \leq n}$ in $M_n(p\mc Ap)$.  Let $V_{ij} \in M_n(p\mc A p)$ be the standard system of matrix units, and observe that
\begin{align*}
 E_p[\tr[V_{i_1j_1}X_{r_1}b_1V_{i_2j_2}X_{r_2}b_2\dotsb V_{i_kj_k}X_{r_k}b_k]] &= n^{k}E[e_{1j_1}y_{r_1}^{(1)}e_{i_21}b_1e_{1j_2}y_{r_2}^{(1)}\dotsb e_{1j_k}y_{r_k}^{(1)}V_{i_11}b_k]\cdot p \\
&= n^{k}E[e_{i_1j_1}y_{r_1}^{(1)}b_1e_{i_2j_2}y_{r_2}^{(1)}b_2\dotsb e_{i_kj_k}y_{r_k}^{(1)}b_k]\cdot p
\end{align*}
so that $(X_1,\dotsc,X_s) \cup \{v_{ij}:1 \leq i,j \leq n\}$ has the same $\mc B$-valued joint distribution as $(ny_1^{(1)},\dotsc,ny_s^{(1)}) \cup \{e_{ij}:1 \leq i,j \leq n\}$.  In particular, $(X_1,\dotsc,X_s)$ is free from $M_n(\mc B)$ with amalgamation over $\mc B$, and hence uniformly $R$-cyclic.  Finally, as above we have
\begin{align*}
\kappa_{E_p}^{(k)}[x_{11}^{(r_1)}b_1,\dotsc,x_{11}^{(r_k)}b_k] &= n \cdot \kappa_{E_{\mc B}}^{(k)}[(n^{-1}X_{r_1})b_1,\dotsc,(n^{-1}X_{r_k})b_k]\\
&= n \cdot \kappa_{E}^{(k)}[y_{r_1}^{(1)}b_1,\dotsc,y_{r_k}^{(1)}b_k]\\
&= \kappa_E^{(k)}[y_{r_1}b_1,\dotsc,y_{r_k}b_k].
\end{align*}
So $(y_1,\dotsc,y_s)$ has the same $\mc B$-valued joint distribution as $(x_{11}^{(1)},\dotsc,x_{11}^{(s)})$, which completes the proof.
\end{proof}

Theorem \ref{infdivis} now follows easily.

\begin{proof}[Proof of Theorem \ref{infdivis}]
The implication (1)$\Rightarrow$(2) is immediate from Theorem \ref{finitedivis}.  Suppose then that (2) holds.  By Theorem \ref{finitedivis}, for each $n \in \N$ there is a C$^*$-algebra $\mc A_n$, a unital inclusion $\mc B \hookrightarrow \mc A_n$, a faithful completely positive conditional expectation $E_n:\mc A_n \to \mc B$, and a family $\{x_{ij}^{(r)}(n): 1 \leq i,j \leq n, 1 \leq r \leq s\}$ such that 
\begin{itemize}
\item $(y_1,\dotsc,y_s)$ has the same $\mc B$-valued joint distribution as $(x_{11}^{(1)}(n),\dotsc,x_{11}^{(s)}(n))$.
\item $X_1,\dotsc,X_s$ forms a $\mc B$-valued uniformly $R$-cyclic family, where $X_r = (x_{ij}^{(r)}(n))_{1 \leq i,j \leq n}$ for $1 \leq r \leq s$.
\end{itemize}

Clearly we may assume that, for each $n \in \N$, $\mc A_n$ is generated as a C$^*$-algebra by $\mc B \cup \{x_{ij}^{(r)}(n): 1 \leq i,j \leq n, 1 \leq r \leq s\}$.  Note that $(x_{ij}^{(r)}(n+1))_{1 \leq i,j \leq n, 1 \leq r \leq s}$ has the same $\mc B$-valued joint distribution as $(x_{ij}^{(r)}(n))_{1 \leq i,j \leq n, 1 \leq r \leq s}$.  Since $E_n$ is faithful, for each $n \in \N$ there is a unique unital $*$-homomorphism $\iota_n:\mc A_n \to \mc A_{n+1}$ such that $\iota_n|_{\mc B} = \mathrm{id}_{\mc B}$ and $\iota_n(x_{ij}^{(r)}(n)) = x_{ij}^{(r)}(n+1)$ for $1 \leq i,j \leq n, 1 \leq r \leq s$.  Let $\mc A$ be the inductive limit of this system (see e.g. \cite{black}), and let $j_n:\mc A_n \to \mc A$ be the associated inclusions.  Then $\mc A$ contains $\mc B$ as a unital C$^*$-subalgebra, and there is a unique faithful completely positive conditional expectation $E:\mc A \to \mc B$ such that $E_n[a] = E[j(a_n)]$ for $n \in \N$ and $a \in \mc A_n$.  For $i,j \in \N$ and $1 \leq r \leq s$, let $x_{ij}^{(r)} = j_n(x_{ij}^{(r)}(n))$ for some $n \geq \max\{i,j\}$, it is clear that this does not depend on the choice of $n$.  For $1 \leq r \leq s$ let $X_r = (x_{ij}^{(r)})_{i,j \in \N}$, then it is clear that $(X_1,\dotsc,X_s)$ is a $\mc B$-valued uniformly $R$-cyclic family and that $(y_1,\dotsc,y_s)$ has the same joint distribution as $(x_{11}^{(1)},\dotsc,x_{11}^{(s)})$, which completes the proof.
\end{proof}

\section{Quantum invariant families of matrices}\label{sec:qinv}

Let $G \subset O_n^+$ be a compact orthogonal quantum group.  Define $\alpha:\C \langle t_{ij}^{(r)}: 1\leq i,j \leq n, 1 \leq r \leq s \rangle \to \C \langle t_{ij}^{(r)}: 1 \leq i, j \leq n, 1 \leq r \leq s \rangle \otimes C(G)$ to be the homomorphism determined by
\begin{equation*}
 \alpha(t_{j_1j_2}^{(r)}) = \sum_{1 \leq i_1,i_2 \leq n} t_{i_1i_2}^{(r)} \otimes u_{i_1j_1}u_{i_2j_2}.
\end{equation*}
It is easily checked that $\alpha$ is a \textit{coaction}, which can be thought of as the conjugation action of $G$ on an $s$-tuple of matrices with noncommutative entries.  We will be interested in families of matrices for which the joint distribution of their entries is invariant under conjugation by $G$.  More precisely, we make the following definition:

\begin{definition}
Let $G \subset O_n^+$ be a compact orthogonal quantum group, and let $(\mc A,\varphi)$ be a noncommutative probability space.  Let $X_1,\dotsc,X_s$ be a family of matrices in $M_n(\mc A)$, $X_r = (x_{ij}^{(r)})_{1 \leq i,j \leq n}$.  We say that the joint distribution of $(x_{ij}^{(r)})$ is \textit{invariant under conjugation by $G$}, or that the family $X_1,\dotsc,X_s$ is \textit{$G$-invariant}, if
\begin{equation*}
 (\varphi_x \otimes \mathrm{id})\alpha(p) = \varphi_x(p)\cdot 1_{C(G)}
\end{equation*}
for any $p \in \C \langle t_{ij}^{(r)}: 1 \leq i, j \leq n, 1 \leq r \leq s \rangle$.
\end{definition}

\begin{remark}\hfill
\begin{enumerate}
\item Explicitly, the condition is that
\begin{equation*}
 \varphi(x_{j_{11}j_{12}}^{(r_1)}\dotsb x_{j_{k1}j_{k2}}^{(r_k)})\cdot 1_{C(G)} = \sum_{1 \leq i_{11},i_{12},\dotsc,i_{k2} \leq n} \varphi(x_{i_{11}i_{12}}^{(r_1)}\dotsb x_{i_{k1}i_{k2}}^{(r_k)})\cdot u_{i_{11}j_{11}}u_{i_{12}j_{12}} \dotsb u_{i_{k2}j_{k2}}
\end{equation*}
for any $1 \leq j_{11},j_{12},\dotsc,j_{k2} \leq n$ and $1 \leq r_1,\dotsc,r_k \leq s$.
\item If $G \subset O_n$ is a compact orthogonal group, evaluating both sides of the above equation at $g \in G$ yields
\begin{equation*}
 \varphi(x_{j_{11}j_{12}}^{(r_1)}\dotsb x_{j_{k1}j_{k2}}^{(r_k)}) = \varphi\bigl( (g^tX_{r_1}g)_{j_{11}j_{12}}\dotsb (g^tX_{r_k}g)_{j_{k1}j_{k2}}\bigr),
\end{equation*}
so that we recover the usual invariance condition.
\end{enumerate}

\end{remark}

We will give a relation between $G$-invariance and the ``easiness'' condition for a compact orthogonal quantum group $G$ in Theorem \ref{easyinv} below.  The first observation is that $G$-invariance of a family of matrices is equivalent to $G$-invariance of the moment series of their entries.

\begin{lemma}\label{invfix}
Let $G \subset O_n^+$ be a compact orthogonal quantum group, and let $(\mc A,\varphi)$ be a noncommutative probability space.  A family $X_1,\dotsc,X_s$ in $M_n(\mc A)$ is $G$-invariant if and only if for each $k \in \N$ and $1 \leq r_1,\dotsc,r_k \leq s$, the vector
\begin{equation*}
 \sum_{1 \leq i_{11},\dotsc,i_{k2} \leq n} \varphi(x_{i_{11}i_{12}}^{(r_1)}\dotsb x_{i_{k1}i_{k2}}^{(r_k)})\cdot e_{i_{11}} \otimes e_{i_{12}}\otimes \dotsb \otimes e_{i_{k2}} \in (\C^n)^{\otimes 2k}
\end{equation*}
is fixed by $u^{\otimes 2k}$, where $u$ is the fundamental representation of $G$.  
\end{lemma}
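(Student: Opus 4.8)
The plan is to show that both conditions, once written out in coordinates, reduce to the same family of scalar identities, one for each $k$ and each tuple $(r_1,\dotsc,r_k)$. Throughout I write $c_{\mathbf i} = \varphi(x_{i_{11}i_{12}}^{(r_1)}\dotsb x_{i_{k1}i_{k2}}^{(r_k)})$ for a multi-index $\mathbf i = (i_{11},i_{12},\dotsc,i_{k2})$, so that the vector in the statement is $\xi = \sum_{\mathbf i} c_{\mathbf i}\, e_{\mathbf i}$ with $e_{\mathbf i} = e_{i_{11}}\otimes\dotsb\otimes e_{i_{k2}}$.

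First I would reduce $G$-invariance to a statement about monomials. Both maps $p \mapsto (\varphi_x \otimes \mathrm{id})\alpha(p)$ and $p \mapsto \varphi_x(p)\cdot 1_{C(G)}$ are linear in $p$, so the defining identity holds for all $p$ if and only if it holds for every monomial. A degree-$k$ monomial is precisely a product $t_{j_{11}j_{12}}^{(r_1)}\dotsb t_{j_{k1}j_{k2}}^{(r_k)}$, hence is specified by a tuple $(r_1,\dotsc,r_k)$ together with a multi-index $\mathbf j$. Applying the definitions of $\alpha$ and of $\varphi_x$ to such a monomial reproduces the explicit invariance condition recorded above, namely
\[
c_{\mathbf j}\cdot 1_{C(G)} = \sum_{\mathbf i} c_{\mathbf i}\, u_{i_{11}j_{11}}u_{i_{12}j_{12}}\dotsb u_{i_{k2}j_{k2}},
\]
to hold for every $\mathbf j$. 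Thus $G$-invariance of the family is equivalent to this identity holding for all $k$, all $(r_1,\dotsc,r_k)$ and all $\mathbf j$, where for fixed $(r_1,\dotsc,r_k)$ the scalars $c_{\mathbf j}$ are exactly the coefficients of $\xi$.

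Next I would unravel the fixed-vector condition. Writing the fundamental corepresentation as $u = \sum_{i,j} e_{ij}\otimes u_{ij}$, the matrix coefficients of $u^{\otimes 2k}$ are precisely the ordered products $u_{i_{11}j_{11}}\dotsb u_{i_{k2}j_{k2}}$ appearing in $\alpha$. Expanding $u^{\otimes 2k}(\xi \otimes 1_{C(G)}) = \xi \otimes 1_{C(G)}$ in the basis $\{e_{\mathbf i}\}$ therefore gives, for each $\mathbf i$,
\[
\sum_{\mathbf j} c_{\mathbf j}\, u_{i_{11}j_{11}}\dotsb u_{i_{k2}j_{k2}} = c_{\mathbf i}\cdot 1_{C(G)}.
\]
Comparing with the invariance identity, the two systems differ only in that the summation runs over the first rather than the second index of the $u$'s: writing $M$ for the matrix with entries $M_{\mathbf i\mathbf j} = u_{i_{11}j_{11}}\dotsb u_{i_{k2}j_{k2}}$, invariance asserts $M^t c = c$ while the fixed-vector condition asserts $M c = c$.

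The main point to address is this apparent transpose discrepancy, and here the orthogonality of $G$ is decisive. Writing $M = U_1 U_2 \dotsb U_{2k}$, where $U_p$ denotes the copy of $u$ placed in the $p$-th tensor slot (all sharing the $C(G)$-leg), each factor satisfies $U_p^{-1} = U_p^t$ because $u^{-1} = u^t$ in $M_n(C(G))$. Since both transposition and inversion reverse the order of a product, $M^t = U_{2k}^t\dotsb U_1^t = U_{2k}^{-1}\dotsb U_1^{-1} = M^{-1}$, so $M$ is orthogonal. Consequently $Mc = c$ if and only if $M^t c = c$ (apply $M^t$, respectively $M$, to both sides), and the two coordinate systems coincide. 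This identifies $G$-invariance of $X_1,\dotsc,X_s$ with the condition that each moment vector $\xi$ be fixed by $u^{\otimes 2k}$. I expect the orthogonality reconciliation of $M$ and $M^t$ to be the only genuinely non-formal step; the rest is bookkeeping with the coaction $\alpha$ and linearity.
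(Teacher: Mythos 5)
Your coordinate bookkeeping is correct and you have isolated the right crux: with $M_{\mathbf i\mathbf j} = u_{i_{11}j_{11}}\dotsb u_{i_{k2}j_{k2}}$, $G$-invariance reads $M^tc=c$ and the fixed-vector condition reads $Mc=c$, where $M^t$ is the entrywise transpose $M^t_{\mathbf i\mathbf j}=M_{\mathbf j\mathbf i}$. But the reconciliation via ``orthogonality of $M$'' has a genuine gap: the identity $(AB)^t=B^tA^t$ fails for matrices over a noncommutative ring, and $C(G)$ is noncommutative in exactly the cases of interest. Already for $k=1$, $M=U_1U_2$ has $M^t_{\mathbf i\mathbf j}=u_{j_1i_1}u_{j_2i_2}$, whereas $(U_2^tU_1^t)_{\mathbf i\mathbf j}=u_{j_2i_2}u_{j_1i_1}$; these differ because the entries do not commute. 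What is true is $M^{-1}=M^*=U_{2k}^t\dotsb U_1^t$, whose $(\mathbf i,\mathbf j)$ entry is the \emph{reversed} product $u_{j_{k2}i_{k2}}\dotsb u_{j_{11}i_{11}}$ --- and that is not $M^t$. Indeed $M$ is genuinely not orthogonal: for $G=O_n^+$ and $k=1$ the Weingarten formula gives
\begin{equation*}
\textstyle\int (M^tM)_{\mathbf i\mathbf i} \;=\; \sum_{l_1,l_2}\int u_{l_1i_1}u_{l_2i_2}u_{l_1i_1}u_{l_2i_2} \;=\; 0 \qquad (i_1\neq i_2),
\end{equation*}
since $\ker(i_1,i_2,i_1,i_2)=\{\{1,3\},\{2,4\}\}$ dominates no noncrossing pairing; if $M^tM=1$ this diagonal entry would integrate to $1$. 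So the step ``$Mc=c\Leftrightarrow M^tc=c$ because $M^t=M^{-1}$'' is unjustified as it stands.

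The equivalence you want is nevertheless true, and the paper's proof supplies exactly the missing device. The map $\Psi(f)=S(f)^*$ on $C(G)$ is a unital algebra \emph{automorphism} (the antipode and the involution each reverse products, so their composite preserves order) satisfying $\Psi(u_{ab})=u_{ba}$. Applying $\Psi$ entrywise to $M$ therefore swaps the indices of every factor while keeping the order of the product, i.e. $(\mathrm{id}\otimes\Psi)(M)=M^t$. Since $\Psi$ is an automorphism fixing $1_{C(G)}$ and the coefficients $c_{\mathbf j}$ are scalars, applying $\mathrm{id}\otimes\Psi$ to $Mc=c$ yields $M^tc=c$ and conversely. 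Replace your orthogonality claim by this observation and the rest of your argument goes through; with that substitution your proof coincides with the paper's.
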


\begin{proof}
Let $\Psi:C(G) \to C(G)$ be the automorphism given by $\Psi(f) = S(f)^*$.  Let $\theta_k$ denote the vector in the statement of the proposition, then we have
\begin{equation*}
(\mathrm{id} \otimes \Psi) u^{\otimes 2k}(\theta_k) = \negthickspace\sum_{\substack{1 \leq j_{11},\dotsc,j_{k2} \leq n\\ 1 \leq i_{11},\dotsc,i_{k2} \leq n}} \negthickspace\varphi(x_{i_{11}i_{12}}^{(r_1)}\dotsb x_{i_{k1}i_{k2}}^{(r_k)})\cdot e_{j_{11}} \otimes e_{j_{12}} \otimes \dotsb \otimes e_{j_{k2}} \otimes u_{i_{11}j_{11}}u_{i_{12}j_{12}}\dotsb u_{i_{k2}j_{k2}}.
\end{equation*}
Since $\Psi$ is an automorphism, $\theta_k$ is fixed by $u^{\otimes 2k}$ if and only if the above expression is equal to
\begin{equation*}
 \theta_k \otimes 1_{C(G)} = \sum_{1 \leq j_{11},\dotsc,j_{k2} \leq n} \varphi(x_{j_{11}j_{12}}^{(r_1)}\dotsb x_{j_{k1}j_{k2}}^{(r_k)})\cdot e_{j_{11}} \otimes e_{j_{12}}\otimes \dotsb \otimes e_{j_{k2}} \otimes 1_{C(G)}.
\end{equation*}
Equating coefficients on $e_{j_{11}} \otimes e_{j_{12}} \otimes \dotsb \otimes e_{j_{k2}}$ completes the proof.
\end{proof}

\begin{theorem}\label{easyinv}
Let $G$ be a free quantum group $O_n^+,S_n^+,B_n^+$ or $H_n^+$ with associated partitions $D(k) \subset NC(k)$.  Let $X_1,\dotsc,X_s$ be a family of matrices in $M_n(\mc A)$, $X_r = (x_{ij}^{(r)})_{1 \leq i,j \leq n}$.  Then $X_1,\dotsc,X_s$ is $G$-invariant if and only if for each $k \in \N$ and $1 \leq r_1,\dots,r_k \leq s$ there is a collection of numbers $\{c_{\pi,\mathbf r}: \pi \in D(2k)\}$ such that
\begin{equation*}
 \varphi(x_{i_{11}i_{12}}^{(r_1)}\dotsb x_{i_{k1}i_{k2}}^{(r_k)}) = \sum_{\substack{\pi \in D(2k)\\ \pi \leq \ker \mathbf i}} c_{\pi,\mathbf r}
\end{equation*}
for any $1 \leq i_{11},\dotsc,i_{k2} \leq n$.
\end{theorem}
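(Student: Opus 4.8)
The plan is to combine Lemma \ref{invfix} with the easiness of $G$. By Lemma \ref{invfix}, $G$-invariance of $X_1,\dotsc,X_s$ is equivalent to the assertion that for each $k \in \N$ and $1 \leq r_1,\dotsc,r_k \leq s$ the moment vector
\begin{equation*}
 \theta_k = \sum_{1 \leq i_{11},\dotsc,i_{k2} \leq n} \varphi(x_{i_{11}i_{12}}^{(r_1)}\dotsb x_{i_{k1}i_{k2}}^{(r_k)})\cdot e_{i_{11}} \otimes \dotsb \otimes e_{i_{k2}}
\end{equation*}
lies in the fixed space of $u^{\otimes 2k}$. Now the fixed space of $u^{\otimes 2k}$ is precisely $\Hom(u^{\otimes 0}, u^{\otimes 2k})$, where $u^{\otimes 0}$ denotes the trivial representation on $\C$. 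Since $G$ is a free quantum group, hence easy (Definition \ref{easydef}), this intertwiner space is spanned by the vectors $\{T_\pi(1) : \pi \in D(2k)\}$, where $D(2k) \subset NC(0,2k)$ is the collection of noncrossing partitions with no upper points associated to $G$.

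The key step is to write out these spanning vectors explicitly. Applying the formula for $T_\pi$ to the empty tensor gives
\begin{equation*}
 \xi_\pi := T_\pi(1) = \sum_{\substack{1 \leq j_1,\dotsc,j_{2k} \leq n \\ \pi \leq \ker \mathbf j}} e_{j_1} \otimes \dotsb \otimes e_{j_{2k}},
\end{equation*}
because the $\delta$-symbol is nonzero exactly when each block of $\pi$ carries a constant index, i.e. when $\pi \leq \ker \mathbf j$. Consequently the coefficient of the basis vector $e_{i_{11}} \otimes \dotsb \otimes e_{i_{k2}}$ in $\xi_\pi$ equals $1$ when $\pi \leq \ker \mathbf i$ and $0$ otherwise.

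With these pieces in place both implications follow by comparing coefficients. If the family is $G$-invariant, then $\theta_k$ lies in $\mathrm{span}(\xi_\pi : \pi \in D(2k))$, so there exist scalars $\{c_{\pi,\mathbf r} : \pi \in D(2k)\}$ with $\theta_k = \sum_{\pi \in D(2k)} c_{\pi,\mathbf r}\, \xi_\pi$; reading off the coefficient of $e_{i_{11}} \otimes \dotsb \otimes e_{i_{k2}}$ on each side yields exactly the claimed moment formula. Conversely, given a collection $\{c_{\pi,\mathbf r}\}$ satisfying the formula, the same coefficient comparison shows $\theta_k = \sum_{\pi \in D(2k)} c_{\pi,\mathbf r}\, \xi_\pi$, so $\theta_k$ lies in the fixed space and $G$-invariance follows from Lemma \ref{invfix}. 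The argument is essentially a direct translation once the fixed space has been identified with the span of the $\xi_\pi$; the only point that requires care is the bookkeeping matching the combinatorial constraint $\pi \leq \ker \mathbf i$ coming from $T_\pi$ with the index pattern of the moment functional. Since the theorem asserts only the existence of the coefficients $c_{\pi,\mathbf r}$ and not their uniqueness, any linear dependence among the $\xi_\pi$ causes no difficulty.
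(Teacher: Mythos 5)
Your argument is correct and follows essentially the same route as the paper: reduce via Lemma \ref{invfix} to the moment vector lying in $Fix(u^{\otimes 2k})$, identify that fixed space with $\mathrm{span}\{T_\pi : \pi \in D(2k)\}$ by easiness, and compare coefficients on the standard basis. The remark that only existence (not uniqueness) of the $c_{\pi,\mathbf r}$ is claimed, so linear dependence among the $T_\pi$ is harmless, is a correct and worthwhile observation.
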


\begin{proof}
By Lemma \ref{invfix}, $X_1,\dotsc,X_s$ is $G$-invariant if and only if for each $k \in \N$ and $1 \leq r_1,\dotsc,r_k \leq s$ the vector
\begin{equation*}
\sum_{1 \leq i_{11},\dotsc,i_{k2} \leq n} \varphi(x_{i_{11}i_{12}}^{(r_1)}\dotsb x_{i_{k1}i_{k2}}^{(r_k)}) \cdot e_{i_{11}}\otimes e_{i_{12}}\otimes \dotsb \otimes e_{i_{k2}}
\end{equation*}
is fixed by $u^{\otimes 2k}$.  But recall that $Fix(u^{\otimes 2k}) = \mathrm{span} \{ T_\pi: \pi \in D(2k)\}$, where
\begin{equation*}
 T_\pi = \sum_{\substack{1 \leq i_{11},\dotsc,i_{k2} \leq n \\ \pi \leq \ker \mathbf i}} e_{i_{11}} \otimes e_{i_{12}}\otimes \dotsb \otimes e_{i_{k2}}.
\end{equation*}
It follows that $X$ is $G$-invariant if and only if for each $k \in \N$ and $1\leq r_1,\dotsc, r_k \leq s$ there are numbers $\{c_{\pi,\mathbf r}: \pi \in D(2k)\}$ such that
\begin{equation*}
 \sum_{1 \leq i_{11},\dotsc,i_{k2} \leq n} \varphi(x_{i_{11}i_{12}}^{(r_1)}\dotsb x_{i_{k1}i_{k2}}^{(r_k)}) \cdot e_{i_{11}} \otimes e_{i_{12}}\otimes \dotsb \otimes e_{i_{k2}} = \sum_{\pi \in D(2k)} c_{\pi,\mathbf r}\cdot T_\pi.
\end{equation*}
Equating coefficients on $e_{i_{11}} \otimes \dotsb \otimes e_{i_{k2}}$ yields the desired result.
\end{proof}

\begin{remark}
In the next section, we will see that if $X_1,\dotsc,X_s$ is a uniformly $R$-cyclic family with respect to a $\varphi$-preserving conditional expectation $E$, then $\{c_{\pi,\mathbf r}:\pi \in NC_2(2k)\}$ can be taken to be $\varphi$ applied to certain ``cyclic'' operator-valued cumulants, which establishes $O_n^+$-invariance.  Theorem \ref{orthogonalcase} can be viewed as a kind of converse: if there are $\{c_{\pi,\mathbf r}:\pi \in NC_2(2k)\}$ which satisfy the relation above for a self-adjoint family $X_1,\dotsc,X_s$ of infinite matrices with entries in a W$^*$-probability space $(M,\varphi)$, then they must be given by $\varphi$ applied to cyclic operator-valued cumulants.  The statement for $H_n^+$-invariance is more complicated but of a similar nature, see Proposition \ref{h+finite}.  In general these $c_\pi$ appear to be rather mysterious, a better understanding here might help with characterizing $S^+$ and $B^+$-invariant matrices.
\end{remark}

To further analyze the structure of $G$-invariant families, we will need a more analytic framework.  Throughout the rest of the section, $(M,\varphi)$ will be a W$^*$-probability space.  $X_1,\dotsc,X_s$ will be a family of matrices in $M_n(M)$, $X_r = (x_{ij}^{(r)})_{1 \leq i,j \leq n}$, which is \textbf{self-adjoint} in the sense that whenever $X$ is in the family, so is $X^*$.  In other words there is an involution $\sigma$ of $\{1,\dotsc,s\}$ such that $X_r^* = X_{\sigma(r)}$.  Observe that the coaction $\alpha$ is a $*$-homomorphism when $\C \langle t_{ij}^{(r)}: 1\leq i,j \leq n, 1 \leq r \leq s\rangle$ is given the $*$-structure determined by ${t_{ij}^{(r)}}^* = t_{ji}^{(\sigma(r))}$.

By restricting if necessary, we will assume that $M$ is generated by $\{x_{ij}^{(r)}:1 \leq i,j \leq n, 1 \leq r \leq s\}$.  Since $\alpha$ preserves the $*$-distribution of $(x_{ij}^{(r)})$, it follows that $\alpha$ extends to a coaction $\overline \alpha:M \to M \overline \otimes L^\infty(G)$ determined by
\begin{equation*}
\overline \alpha(p(x)) = (\mathrm{ev}_x \otimes \pi) \alpha(p)
\end{equation*}
for $p \in \C \langle t_{ij}^{(r)}: 1 \leq i,j \leq n, 1\leq r \leq s \rangle$, where $L^\infty(G)$ denotes the weak closure of $C(G)$ under the GNS representation $\pi$ for the Haar state $\int$, see e.g. \cite{cur3}.  Let $\mc B$ denote the fixed point algebra,
\begin{equation*}
 \mc B = \{m \in M: \overline \alpha(m) = m \otimes 1 \},
\end{equation*}
then $E = (\mathrm{id} \otimes \int) \circ \overline \alpha$ defines a $\varphi$-preserving conditional expectation of $M$ onto $\mc B$.  We will now give expressions for the $\mc B$-valued moment functionals in the case that $G$ is a free quantum group.  

\begin{theorem}\label{finitemoments}
Suppose that $G \subset O_n^+$ is a free quantum group with associated partitions $D(k) \subset NC(k)$.  Let $\tau \in NC(k)$, $1 \leq j_{11},j_{12},\dotsc,j_{k2} \leq n$, $1 \leq r_1,\dotsc,r_k \leq s$ and $b_0,\dotsc,b_k \in \mc B$, then
\begin{multline*}
 E^{(\tau)}[b_0x_{j_{11}j_{12}}^{(r_1)}b_1,\dotsc,x_{j_{k1}j_{k2}}^{(r_k)}b_k]\\
 = \sum_{\substack{\sigma \in D(2k)\\ \sigma \leq \hat \tau \wedge \ker \mathbf j}} \sum_{\substack{\pi \in D(2k)\\ \pi \leq \hat \tau}} \biggl(\prod_{V \in \hat \tau} W_{D(V),n}(\pi|_V,\sigma|_V)\biggr) \sum_{\substack{1 \leq i_{11},\dotsc,i_{k_2}\leq n\\ \pi \leq \ker \mathbf i}} b_0x_{i_{11}i_{12}}^{(r_1)}\dotsb x_{i_{k1}i_{k2}}^{(r_k)}b_k,
\end{multline*}
where $\pi|_V,\sigma|_V$ denote the restrictions of $\pi,\sigma$ to $V$.
\end{theorem}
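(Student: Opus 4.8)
The plan is to prove the formula by induction on the number of blocks of $\tau$, using the recursive definition of the moment functional $E^{(\tau)}$ from Section~\ref{sec:prelim} together with the Weingarten formula. The underlying mechanism is that computing $E^{(\tau)}$ by repeatedly absorbing interval blocks applies one Haar integration per block of $\tau$; since each entry $x^{(r)}_{j_1 j_2}$ contributes two generators $u_{i_1 j_1} u_{i_2 j_2}$ under $\overline{\alpha}$, a block of size $m$ produces a single Weingarten factor indexed by the corresponding block of $\hat{\tau}$, which has $2m$ points. I would first treat the base case $\tau = 1_k$, where $\hat{\tau} = 1_{2k}$, and then carry out the inductive step.

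\textbf{Base case.} Here the product $\prod_{V \in \hat\tau}$ collapses to the single factor $W_{D(2k),n}(\pi,\sigma)$, and the conditions become $\sigma \le \ker \mathbf j$ and $\pi \le \ker \mathbf i$. To establish it I would expand $E[b_0 x^{(r_1)}_{j_{11}j_{12}} b_1 \cdots x^{(r_k)}_{j_{k1}j_{k2}} b_k] = (\mathrm{id} \otimes \int) \overline\alpha(\,\cdot\,)$. Since $\overline\alpha$ is a $*$-homomorphism, $\overline\alpha(b_i) = b_i \otimes 1$ for $b_i \in \mc B$, and $\overline\alpha(x^{(r)}_{j_1 j_2}) = \sum_{i_1,i_2} x^{(r)}_{i_1 i_2} \otimes u_{i_1 j_1} u_{i_2 j_2}$; multiplying these out and applying $\mathrm{id} \otimes \int$ produces a sum over $\mathbf i$ of the products $b_0 x^{(r_1)}_{i_{11}i_{12}} \cdots x^{(r_k)}_{i_{k1}i_{k2}} b_k$ weighted by $\int u_{i_{11}j_{11}} u_{i_{12}j_{12}} \cdots u_{i_{k2}j_{k2}}$. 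The Weingarten formula evaluates this integral as $\sum_{\pi,\sigma \in D(2k),\, \pi \le \ker \mathbf i,\, \sigma \le \ker \mathbf j} W_{D(2k),n}(\pi,\sigma)$, and interchanging the summation over $\mathbf i$ with that over $(\pi,\sigma)$ gives the base case.

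\textbf{Inductive step.} Assuming $\tau$ has at least two blocks, I would choose an interval block $V_0 = \{l+1,\dots,l+m\}$ of $\tau$ (with the leftmost block handled by absorbing $c$ into $b_0$ in the degenerate case $l = 0$). The recursion gives $E^{(\tau)}[\cdots] = E^{(\tau \setminus V_0)}[\,\dots, x^{(r_l)}_{j_{l1}j_{l2}}(b_l c), \dots\,]$ with $c = E[x^{(r_{l+1})}_{j_{l+1,1}j_{l+1,2}} b_{l+1} \cdots x^{(r_{l+m})}_{j_{l+m,1}j_{l+m,2}} b_{l+m}] \in \mc B$. I would rewrite $c$ by the base case, introducing internal indices $\mathbf i_{V_0}$ summed under $\pi_0 \le \ker \mathbf i_{V_0}$, a factor $W_{D(V_0),n}(\pi_0,\sigma_0)$, and the constraint $\sigma_0 \le \ker \mathbf j|_{V_0}$; then apply the inductive hypothesis to $E^{(\tau \setminus V_0)}$, with $b_l c$ playing the role of the $\mc B$-coefficient in slot $l$. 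The decisive observation is that substituting the matrix-entry expression for $c$ back into the inner product splices the entries of $V_0$ into their original positions $l+1,\dots,l+m$, recombining everything into the single product $b_0 x^{(r_1)}_{i_{11}i_{12}} \cdots x^{(r_k)}_{i_{k1}i_{k2}} b_k$ summed over all $2k$ indices.

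\textbf{Main obstacle.} What remains is the combinatorial matching. Writing $\pi$ for the block union of the row-partition $\pi'$ from the hypothesis and $\pi_0$ from $V_0$, and $\sigma$ similarly, I must verify: (i) as $\pi'$ ranges over $\{\pi' \in D(2(k-m)) : \pi' \le \widehat{\tau \setminus V_0}\}$ and $\pi_0$ over $D(V_0)$, the union $\pi$ ranges exactly over $\{\pi \in D(2k) : \pi \le \hat\tau\}$, and likewise for $\sigma$ under the additional constraint $\sigma \le \ker \mathbf j$; (ii) the index restriction $\pi \le \ker \mathbf i$ decouples into $\pi' \le \ker \mathbf i'$ and $\pi_0 \le \ker \mathbf i_{V_0}$, so the two index summations are independent; and (iii) the product of Weingarten weights from the two subproblems is precisely the one-factor-per-block product $\prod_{V \in \hat\tau} W_{D(V),n}(\pi|_V, \sigma|_V)$, which follows once $\hat\tau = \widehat{\tau \setminus V_0} \sqcup \hat V_0$ is recorded (valid because $V_0$ is an interval, so deleting it and relabeling is compatible with the fattening). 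Point (i) is where freeness of the quantum group is essential: because $\hat\tau$ is noncrossing and each $D(2k)$ is $NC(2k)$ intersected with a condition depending only on block sizes, a refinement of $\hat\tau$ lies in $D(2k)$ if and only if its restriction to every block of $\hat\tau$ lies in the corresponding $D(V)$. This yields the bijection $\{\pi \in D(2k) : \pi \le \hat\tau\} \cong \prod_{V \in \hat\tau} D(V)$ and closes the induction.
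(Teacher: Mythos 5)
Your proposal is correct and follows essentially the same route as the paper: the paper proves the base case $\tau = 1_k$ exactly as you do (apply $\overline\alpha$, integrate, invoke the Weingarten formula, and interchange sums) and then disposes of the general case with the single sentence ``the general case follows from induction on the number of blocks of $\tau$.'' The combinatorial matching you spell out in your inductive step --- in particular the factorization $\{\pi \in D(2k): \pi \leq \hat\tau\} \cong \prod_{V \in \hat\tau} D(V)$, which uses that $D$ is cut out of $NC$ by a local block-size condition --- is precisely the content the paper leaves implicit, so your write-up is a faithful (and more detailed) version of the intended argument.
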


\begin{proof}
First consider the case $\tau = 1_k$ is the partition with only one block.  Since $b_0,\dotsc,b_k$ are fixed by $\overline \alpha$, we have
\begin{equation*}
 \overline \alpha(b_0x_{j_{11}j_{12}}^{(r_1)}\dotsb x_{j_{k1}j_{k2}}^{(r_k)}b_k) = \sum_{1 \leq i_{11},\dotsc,i_{k2} \leq n} b_0x_{i_{11}i_{12}}^{(r_1)}\dotsb x_{i_{k1}i_{k2}}^{(r_k)}b_k \otimes u_{i_{11}j_{11}}\dotsb u_{i_{k2}j_{k2}}.
\end{equation*}
Then
\begin{align*}
 E[b_0x_{j_{11}j_{12}}^{(r_1)}b_1\dotsb x_{j_{k1}j_{k2}}^{(r_k)}b_k] &= \sum_{1 \leq i_{11},\dotsc,i_{k2} \leq n} b_0x_{i_{11}i_{12}}^{(r_1)}\dotsb x_{i_{k1}i_{k2}}^{(r_k)}b_k \cdot \int_{G} u_{i_{11}j_{11}}\dotsb u_{i_{k2}j_{k2}}\\
&= \sum_{1 \leq i_{11},\dotsc,i_{k2} \leq n} b_0x_{i_{11}i_{12}}^{(r_1)}\dotsb x_{i_{k1}i_{k2}}^{(r_k)}b_k \cdot \bigl(\sum_{\substack{\pi,\sigma \in D(2k)\\ \pi \leq \ker \mathbf i\\ \sigma \leq \ker \mathbf j}} W_{D(k),n}(\pi,\sigma)\bigr)\\
&= \sum_{\substack{\sigma \in D(2k)\\ \sigma \leq \ker \mathbf j}} \sum_{\pi \in D(2k)} W_{D(2k),n}(\pi,\sigma)\sum_{\substack{1 \leq i_{11},\dotsc,i_{k2} \leq n\\ \pi \leq \ker \mathbf i}} b_0x_{i_{11}i_{12}}^{(r_1)}\dotsb x_{i_{k1}i_{k2}}^{(r_k)}b_k,
\end{align*}
as claimed.  The general case follows from induction on the number of blocks of $\tau$.  
\end{proof}

\subsection{Infinite quantum invariant families} \label{sec:infinvar}\hfill

Suppose now that $G$ is one the free quantum groups $O,S,B,H$.  Note that for $n < m$ we have inclusions $G_n \hookrightarrow G_m$, expressed as the Hopf algebra morphisms $\omega_{n,m}:C(G_m) \to C(G_n)$ defined by
\begin{equation*}
 \omega_{n,m}(u_{ij}) = \begin{cases} u_{ij}, & 1 \leq i, j \leq n\\ \delta_{ij}, & \max\{i,j\} > n\end{cases}.
\end{equation*}

A self-adjoint family $X_1,\dotsc,X_s$ of infinite matrices, $X_r = (x_{ij}^{(r)})_{i,j \in \N}$, will be called \textit{$G$-invariant} if for each $n \in \N$ the family $X_1^{(n)},\dotsc,X_s^{(n)}$, $X_r^{(n)} = (x_{ij}^{(r)})_{1 \leq i, j\leq n}$, is $G_n$-invariant.  For working with infinite matrices, it will be convenient to modify the coactions defined above as follows.  For each $n \in \N$, let $\beta_n:\C \langle t_{ij}^{(r)} :i,j \in \N, 1\leq r \leq s \rangle \to \C \langle t_{ij}^{(r)}:i,j \in \N, 1 \leq r \leq s\rangle \otimes C(G_n)$ be the unital homomorphism determined by
\begin{equation*}
 \beta_n(t_{j_{1}j_2}^{(r)}) = \begin{cases} \sum_{1 \leq i_1,i_2 \leq n} t_{i_1i_2}^{(r)} \otimes u_{i_1j_1}u_{i_2j_2}, & 1 \leq j_1,j_2 \leq n\\
                          \sum_{1 \leq i_1 \leq n} t_{i_1j_1}^{(r)} \otimes u_{i_1j_1}, & 1 \leq j_1 \leq n < j_2\\
\sum_{1 \leq i_2 \leq n} t_{i_2j_2}^{(r)} \otimes u_{i_2j_2}, & 1 \leq j_2 \leq n < j_1\\
t_{j_1j_2}^{(r)} \otimes 1_{C(G_n)}, & n < \min\{j_1,j_2\}
                         \end{cases}.
\end{equation*}
It is easily verified that $\beta_n$ is a coaction.  Moreover, we have the compatibilities
\begin{equation*}
(\mathrm{id} \otimes \omega_{n,m})\circ \beta_m = \beta_n
\end{equation*}
and
\begin{equation*}
 (\iota_n \otimes \mathrm{id}) \circ \alpha_n = \beta_n \circ \iota_n,
\end{equation*}
where $\iota_n:\C \langle t_{ij}^{(r)}: 1 \leq i,j \leq n, 1 \leq r \leq s \rangle \to \C \langle t_{ij}^{(r)}:i,j \in \N, 1 \leq r \leq s\rangle$ is the obvious inclusion.
Using these compatibilities, it is not hard to see that a family $X_1,\dotsc,X_s$ is $G$-invariant if and only if $\varphi_x$ is invariant under the coactions $\beta_n$ for each $n \in \N$.

Suppose now that $X_1,\dotsc,X_s$ is a self-adjoint $G$-invariant family of infinite matrices random variables in $(M,\varphi)$, and assume that $M$ is generated by $\{x_{ij}^{(r)}:i,j \in \N, 1\leq r \leq s\}$.  As above, the coactions $\beta_n$ extend to $\overline \beta_n:M \to M \overline\otimes L^\infty(G_n)$.  Let $\mc B_n$ be the fixed point algebra of $\overline \beta_n$, and let $E_n = (\mathrm{id} \otimes \int) \circ \overline \beta_n:M \to \mc B_n$ be the $\varphi$-preserving conditional expectation given by integrating the action of $G_n$.  The advantage of using $\beta_n$ is that the fixed point algebras $\mc B_n$ are now nested, which follows from $\beta_{n} = (\mathrm{id} \otimes \omega_{n,n+1}) \circ \beta_{n+1}$.  Define the \textit{$G$-invariant subalgebra} $\mc B$ by
\begin{equation*}
 \mc B = \bigcap_{n \geq 1} \mc B_n.
\end{equation*}
A simple reversed martingale convergence argument shows that there is a $\varphi$-preserving conditional expectation $E:M \to \mc B$ given by
\begin{equation*}
 E[m] = \lim_{n \to \infty} E_n[m],
\end{equation*}
where the limit is taken in the strong operator topology, see e.g. \cite[Proposition 4.7]{cur4}.

We can now give formulas for the moment and cumulant functionals taken with respect to the $G$-invariant subalgebra.

\begin{theorem}\label{limitcumulants}
Let $G$ be one of $O^+,S^+,B^+,H^+$, with associated partitions $D(k) \subset NC(k)$.  Let $\tau \in NC(k)$, $j_{11},j_{12},\dotsc,j_{k2} \in \N$ and $b_0,\dotsc,b_k \in B$.  Then we have
\begin{equation*}
E^{(\tau)}[b_0x_{j_{11}j_{12}}^{(r_1)}b_1,\dotsc,x_{j_{k1}j_{k2}}^{(r_k)}b_k] = \lim_{n \to \infty} \sum_{\substack{\pi,\sigma \in D(2k)\\ \pi \leq \sigma \leq \hat \tau \wedge \ker \mathbf j}} \mu(\pi,\sigma)n^{-|\pi|}\sum_{\substack{1 \leq i_{11},\dotsc,i_{k2} \leq n\\ \pi \leq \ker \mathbf i}} b_0x_{i_{11}i_{12}}^{(r_1)}\dotsb x_{i_{k1}i_{k2}}^{(r_k)}b_k
\end{equation*}
and
\begin{equation*}
 \kappa^{(\tau)}[b_0x_{j_{11}j_{12}}^{(r_1)}b_1,\dotsc,x_{j_{k1}j_{k2}}^{(r_k)}b_k] = \lim_{n \to \infty} \sum_{\substack{\sigma \in D(2k)\\ \sigma \leq \ker \mathbf j\\ \sigma \vee \hat 0_k = \hat \tau}} \sum_{\substack{\pi \in D(2k)\\ \pi \leq \sigma}} \mu(\pi,\sigma)n^{-|\pi|}\negthickspace\sum_{\substack{1 \leq i_{11},\dotsc,i_{k2} \leq n\\ \pi \leq \ker \mathbf i}}\negthickspace b_0x_{i_{11}i_{12}}^{(r_1)}\dotsb x_{i_{k1}i_{k2}}^{(r_k)}b_k,
\end{equation*}
where the limits are taken in the strong operator topology.
\end{theorem}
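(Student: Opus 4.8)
The plan is to obtain both identities by passing to the limit $n\to\infty$ in the finite-level formula of Theorem \ref{finitemoments}, applied now to the coactions $\overline\beta_n$ and the conditional expectations $E_n$. Fix $b_0,\dotsc,b_k\in\mc B$ and the indices $j_{11},\dotsc,j_{k2}$. Since $b_0,\dotsc,b_k\in\mc B\subset\mc B_n$ are fixed by $\overline\beta_n$ and all $j_{ab}\le n$ once $n$ is large, the derivation proving Theorem \ref{finitemoments} (the case $\tau=1_k$ directly, then induction on the blocks of $\tau$) applies verbatim with $E$ replaced by $E_n$. Writing $S_{\pi,n}=\sum_{\pi\le\ker\mathbf i,\ i_{ab}\le n} b_0 x^{(r_1)}_{i_{11}i_{12}}\dotsb x^{(r_k)}_{i_{k1}i_{k2}} b_k$, this expresses $E_n^{(\tau)}[b_0x^{(r_1)}_{j_{11}j_{12}}b_1,\dotsc,x^{(r_k)}_{j_{k1}j_{k2}}b_k]$ as the sum over $\pi,\sigma\in D(2k)$ with $\sigma\le\hat\tau\wedge\ker\mathbf j$ and $\pi\le\hat\tau$ of $\bigl(\prod_{V\in\hat\tau}W_{D(V),n}(\pi|_V,\sigma|_V)\bigr)\,S_{\pi,n}$. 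Since $E=\lim_n E_n$ in the strong operator topology and the $x^{(r)}_{ij}$ are bounded, an induction on the number of blocks of $\tau$ (using joint strong continuity of multiplication on norm-bounded sets) gives $E^{(\tau)}[\,\cdots\,]=\lim_n E_n^{(\tau)}[\,\cdots\,]$ strongly, so it suffices to extract the leading asymptotics of the right-hand side.

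For the moment formula I would analyze the Weingarten product blockwise. Because $\pi,\sigma\le\hat\tau$ and both lie in $D(2k)$, each block $V$ of $\hat\tau$ has even size and $\pi|_V,\sigma|_V\in D(V)$, so Theorem \ref{West} gives that $W_{D(V),n}(\pi|_V,\sigma|_V)$ is asymptotic to $\mu(\pi|_V,\sigma|_V)$ times $n$ raised to minus the number of blocks of $\pi|_V$, with relative error $O(n^{-1})$. Multiplying over $V\in\hat\tau$, using that the number of blocks of $\pi$ is the sum over $V$ of the numbers of blocks of $\pi|_V$, and using the multiplicativity of the Möbius function of $NC$ over the block decomposition induced by $\hat\tau$, namely $\prod_{V\in\hat\tau}\mu(\pi|_V,\sigma|_V)=\mu(\pi,\sigma)$ (standard, see \cite{ns}), I obtain $\prod_{V\in\hat\tau}W_{D(V),n}(\pi|_V,\sigma|_V)=n^{-|\pi|}\bigl(\mu(\pi,\sigma)+O(n^{-1})\bigr)$. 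As $\mu(\pi,\sigma)=0$ unless $\pi\le\sigma$, the main term restricts the double sum to $\pi\le\sigma\le\hat\tau\wedge\ker\mathbf j$ and contributes exactly $\mu(\pi,\sigma)n^{-|\pi|}S_{\pi,n}$. The residual contribution is $\sum_{\pi,\sigma}O(n^{-|\pi|-1})S_{\pi,n}$; using the uniform bound $\sup_{i,j}\|x^{(r)}_{ij}\|<\infty$ (invariance under the coordinate permutations inside $G_n$ forces all entries with $i=j$, respectively $i\ne j$, to be identically distributed, hence of equal norm) one has $\|S_{\pi,n}\|=O(n^{|\pi|})$, so this residue is $O(n^{-1})$ in norm and vanishes in the limit. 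This yields the moment formula.

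For the cumulant formula I would apply Möbius inversion on $NC(k)$, $\kappa^{(\tau)}[\,\cdots\,]=\sum_{\rho\le\tau}\mu(\rho,\tau)\,E^{(\rho)}[\,\cdots\,]$, and substitute the moment formula just proved. Interchanging the finitely many sums, the coefficient multiplying $\mu(\pi,\sigma)n^{-|\pi|}S_{\pi,n}$ becomes $\sum_{\rho}\mu(\rho,\tau)$, the sum running over $\rho\le\tau$ with $\sigma\le\hat\rho$ (the constraints $\pi\le\sigma$, $\sigma\le\ker\mathbf j$, $\pi,\sigma\in D(2k)$ being independent of $\rho$). For $\sigma\in D(2k)$ the join $\sigma\vee\hat 0_k$ is noncrossing and equals $\widehat{\tau'}$ for a unique $\tau'\in NC(k)$, and $\sigma\le\hat\rho\Leftrightarrow\widehat{\tau'}\le\hat\rho\Leftrightarrow\tau'\le\rho$; hence the coefficient equals $\sum_{\tau'\le\rho\le\tau}\mu(\rho,\tau)=\delta_{\tau',\tau}$ by the defining relation of the Möbius function. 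This is nonzero precisely when $\sigma\vee\hat 0_k=\hat\tau$, which is exactly the constraint in the statement, so the cumulant formula follows.

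The main obstacle is analytic bookkeeping rather than combinatorics. One must justify that the recursively defined functionals $E_n^{(\tau)}$ converge strongly to $E^{(\tau)}$, so that the limit may be pushed through the nested $\mc B$-valued structure, and one must control the error uniformly even though the inner sums $S_{\pi,n}$ contain on the order of $n^{|\pi|}$ terms. Both points rely on the uniform norm bound for the entries coming from invariance together with joint strong continuity of multiplication on bounded sets; granting Theorem \ref{West} and the standard $NC$ facts (blockwise Weingarten asymptotics, multiplicativity of $\mu$, and the single Möbius collapse for the cumulants, with the fattening relations of Lemma \ref{fatfacts}), the remaining steps are routine.
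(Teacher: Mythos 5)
Your proposal is correct and follows essentially the same route as the paper: reversed martingale convergence $E=\lim_n E_n$, the finite-level Weingarten expansion of Theorem \ref{finitemoments}, the asymptotics of Theorem \ref{West} combined with multiplicativity of the M\"obius function and the uniform norm bound on the entries coming from (classical) permutation invariance plus faithfulness of $\varphi$, and finally M\"obius inversion with the collapse $\sum_{\rho}\mu(\rho,\tau)=\delta_{\sigma\vee\hat 0_k,\hat\tau}$ for the cumulant formula. The only cosmetic difference is that you evaluate that last sum directly on $NC(k)$ after identifying $\sigma\vee\hat 0_k=\widehat{\tau'}$, whereas the paper sums over the interval $[\sigma\vee\hat 0_k,\hat\tau]$ in $NC(2k)$; these are the same computation.
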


\begin{proof}
We will first prove the formula for the moment functionals.  By a reversed martingale convergence argument we have
\begin{equation*}
 E^{(\tau)}[b_0x_{j_{11}j_{12}}^{(r_1)}b_1,\dotsc,x_{j_{k1}j_{k2}}^{(r_k)}b_k] = \lim_{n \to \infty} E_n^{(\tau)}[b_0x_{j_{11}j_{12}}^{(r_1)}b_1,\dotsc,x_{j_{k1}j_{k2}}^{(r_k)}b_k]
\end{equation*}
with convergence in the strong topology, see e.g. \cite[Proposition 4.7]{cur4}.  From Proposition \ref{finitemoments}, the right hand side is equal to
\begin{equation*}
 \lim_{n \to \infty} \sum_{\substack{\sigma \in D(2k)\\ \sigma \leq \hat \tau \wedge \ker \mathbf j}} \sum_{\substack{\pi \in D(2k)\\ \pi \leq \hat \tau}} \biggl(\prod_{V \in \hat \tau} W_{D(V),n}(\pi|_V,\sigma|_V)\biggr) \sum_{\substack{1 \leq i_{11},\dotsc,i_{k_2}\leq n\\ \pi \leq \ker \mathbf i}} b_0x_{i_{11}i_{12}}^{(r_1)}\dotsb x_{i_{k1}i_{k2}}^{(r_k)}b_k.
\end{equation*}
Now since the family is $S^+$-invariant and hence $S$-invariant, and is self-adjoint, it follows that the $*$-distribution of $x^{(r)}_{ij}$ depends only on $r$ and on whether $i$ is equal to $j$.  Since $\varphi$ is faithful, it follows that there is a finite constant $C$ such that $\|x^{(r)}_{ij}\| \leq C$ for $1 \leq r \leq s$ and all $i,j \in \N$.  We then have
\begin{equation*}
 \biggl\|\sum_{\substack{1 \leq i_{11},\dotsc,i_{k_2}\leq n\\ \pi \leq \ker \mathbf i}} b_0x_{i_{11}i_{12}}^{(r_1)}\dotsb x_{i_{k1}i_{k2}}^{(r_k)}b_k\biggr\| \leq n^{|\pi|} C^k \|b_0\|\dotsb \|b_k\|.
\end{equation*}
Now from Theorem \ref{West}, if $\pi, \sigma \leq \hat \tau$ are in $D(2k)$ then we have
\begin{equation*}
n^{|\pi|} \biggl(\prod_{V \in \hat \tau} W_{D(V),n}(\pi|_V,\sigma|_V)\biggr) = \prod_{V \in \hat \tau}\bigl( \mu_V(\pi|_V,\sigma|_V) + O(n^{-1})\bigr)= \mu(\pi,\sigma) + O(n^{-1}),
\end{equation*}
where we have used the multiplicativity of the M\"{o}bius function on $NC(2k)$.  Combining these equations yields the desired result.

The statement for cumulants now follows from M\"{o}bius inversion.  Indeed, we have
\begin{multline*}
 \kappa^{(\tau)}[b_0x_{j_{11}j_{12}}^{(r_1)}b_1,\dotsc,x_{j_{k1}j_{k2}}^{(r_k)}b_k] = \sum_{\substack{\rho \in NC(k)\\ \rho \leq \tau}} \mu(\rho,\tau)E^{(\rho)}[b_0x_{j_{11}j_{12}}^{(r_1)}b_1,\dotsc,x_{j_{k1}j_{k2}}^{(r_k)}b_k]\\
= \lim_{n \to \infty} \sum_{\substack{\rho \in NC(k)\\ \rho \leq \tau}} \mu(\rho,\tau)\sum_{\substack{\pi,\sigma \in D(2k)\\ \pi \leq \sigma \leq \hat \rho \wedge \ker \mathbf j}} \mu(\pi,\sigma)n^{-|\pi|}\sum_{\substack{1 \leq i_{11},\dotsc,i_{k2} \leq n\\ \pi \leq \ker \mathbf i}} b_0x_{i_{11}i_{12}}^{(r_1)}\dotsb x_{i_{k1}i_{k2}}^{(r_k)}b_k\\
= \lim_{n \to \infty} \sum_{\substack{\pi,\sigma \in D(2k)\\ \pi \leq \sigma \leq \hat \tau \wedge \ker \mathbf j}} \biggl(\sum_{\substack{\rho \in NC(k)\\ \sigma \leq \hat \rho \leq \hat \tau}} \mu(\rho,\tau)\biggr) \mu(\pi,\sigma)n^{-|\pi|}\sum_{\substack{1 \leq i_{11},\dotsc,i_{k2} \leq n\\ \pi \leq \ker \mathbf i}} b_0x_{i_{11}i_{12}}^{(r_1)}\dotsb x_{i_{k1}i_{k2}}^{(r_k)}b_k.
\end{multline*}
Note that every non-crossing partition in the interval $(\sigma \vee \hat 0_k, \hat \tau)$ is of the form $\hat \rho$ for a unique $\rho \in NC(k)$, and moreover we have $\mu(\hat \rho, \hat \tau) = \mu(\rho,\tau)$.  It follows that
\begin{equation*}
\sum_{\substack{\rho \in NC(k)\\ \sigma \leq \hat \rho \leq \hat \tau}} \mu(\rho,\tau) = \sum_{\substack{\theta \in NC(2k)\\ \sigma \vee \hat 0_k \leq \theta \leq \hat \tau}} \mu(\theta,\hat \tau) = \begin{cases} 1, & \sigma \vee \hat 0_k = \hat \tau\\ 0, & \text{otherwise}\end{cases},
\end{equation*}
from which the result follows.
\end{proof}

\begin{remark}\label{passage}
In general it is not clear how to simplify the expression for cumulants given in Theorem \ref{limitcumulants} above.  The difficulty is that on the left hand side we have cumulants indexed by non-crossing partition on $k$ points, while the right hand side is expressed in terms of partitions of $2k$ points.  In the next two sections, we will show that in the free orthogonal and hyperoctahedral cases the corresponding partitions $D(2k)$ can be reexpressed in terms of partitions in $NC(k)$.  This will allow us to further analyze the $\mc B$-valued cumulants, and prove Theorems \ref{orthogonalcase} and \ref{hyperoctahedralcase}.
\end{remark}

\section{The free orthogonal case}\label{sec:o+case}

In this section we will complete the proof of Theorem \ref{orthogonalcase}.  As discussed in Remark \ref{passage} above, to further analyze the cumulant formula in Theorem \ref{limitcumulants} we will need to use the fattening procedure to connect $NC(k)$ with $NC_2(2k)$.  

We first prove the implication $(2) \Rightarrow (1)$ in Theorem \ref{orthogonalcase}.  This in fact holds for finite matrices and in a purely algebraic setting.

\begin{proposition}
Let $(\mc A,\varphi)$ be a noncommutative probability space and let $X_1,\dotsc,X_s$ be a family of matrices in $M_n(\mc A)$, $X_r = (x_{ij}^{(r)})_{1 \leq i,j \leq n}$.  Suppose that there is a subalgebra $1 \in \mc B \subset \mc A$ and a $\varphi$-preserving conditional expectation $E:\mc A \to \mc B$ such that $X_1,\dotsc,X_s$ is uniformly $R$-cyclic with respect to $E$.  Then the family $X_1,\dotsc,X_s$ is $O_n^+$-invariant.
\end{proposition}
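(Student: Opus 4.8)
The plan is to deduce $O_n^+$-invariance from the combinatorial criterion of Theorem \ref{easyinv}. For $O_n^+$ the relevant partitions are $D(2k) = NC_2(2k)$, so I must produce, for each $k$ and each $\mathbf r = (r_1,\dotsc,r_k)$, scalars $\{c_{\pi,\mathbf r} : \pi \in NC_2(2k)\}$ with
\begin{equation*}
 \varphi(x_{i_{11}i_{12}}^{(r_1)}\dotsb x_{i_{k1}i_{k2}}^{(r_k)}) = \sum_{\substack{\pi \in NC_2(2k)\\ \pi \leq \ker \mathbf i}} c_{\pi,\mathbf r}.
\end{equation*}
Since $E$ is $\varphi$-preserving we have $\varphi = \varphi \circ E$, so the operator-valued moment-cumulant formula gives
\begin{equation*}
 \varphi(x_{i_{11}i_{12}}^{(r_1)}\dotsb x_{i_{k1}i_{k2}}^{(r_k)}) = \sum_{\sigma \in NC(k)} \varphi\bigl(\kappa_E^{(\sigma)}[x_{i_{11}i_{12}}^{(r_1)},\dotsc,x_{i_{k1}i_{k2}}^{(r_k)}]\bigr),
\end{equation*}
and everything will reduce to understanding the individual nested cumulants $\kappa_E^{(\sigma)}$.

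The key step is a lemma: for every $\sigma \in NC(k)$ and $b_1,\dotsc,b_k \in \mc B$,
\begin{equation*}
 \kappa_E^{(\sigma)}[x_{i_{11}i_{12}}^{(r_1)}b_1,\dotsc,x_{i_{k1}i_{k2}}^{(r_k)}b_k] = \begin{cases} \kappa_E^{(\sigma)}[x_{11}^{(r_1)}b_1,\dotsc,x_{11}^{(r_k)}b_k], & \widetilde\sigma \leq \ker \mathbf i,\\ 0, & \text{otherwise,}\end{cases}
\end{equation*}
where $\ker \mathbf i = \ker(i_{11},i_{12},\dotsc,i_{k1},i_{k2}) \in \mc P(2k)$. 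I would prove this by induction on the number of blocks of $\sigma$. The base case $\sigma = 1_k$ is exactly uniform $R$-cyclicity, once one checks that its composability condition ``$i_l = j_l$'' is the same as $\widetilde{1_k} \leq \ker \mathbf i$. For the inductive step I pick an interval block $V = \{l+1,\dotsc,l+s\}$ and apply the recursive definition of $\kappa_E^{(\sigma)}$: the inner factor $\kappa_E^{(s)}(x^{(r_{l+1})}_{i_{(l+1)1}i_{(l+1)2}}b_{l+1},\dotsc,x^{(r_{l+s})}_{i_{(l+s)1}i_{(l+s)2}}b_{l+s})$ is a single-block cumulant, so by uniform $R$-cyclicity it vanishes unless the entries indexed by $V$ are cyclically composable, and otherwise equals an element $b' \in \mc B$ that does \emph{not} depend on the indices. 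Absorbing $b'$ into a neighbouring argument (using that $\kappa_E^{(k)}$ are $\mc B$-bimodule maps), the surviving expression is $\kappa_E^{(\sigma \setminus V)}$ evaluated on matrix entries times $\mc B$-elements, to which the inductive hypothesis applies. What makes the induction close is the block-by-block decomposition of the constraint: $\widetilde\sigma \leq \ker \mathbf i$ holds if and only if the entries indexed by each block of $\sigma$ are cyclically composable, which is immediate from the definition of the fattening (each block of $\sigma$ contributes exactly the ``cyclic'' pairings around its own points).

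Granting the lemma, I apply it with all $b_l = 1$, so that the cumulant sum collapses to
\begin{equation*}
 \varphi(x_{i_{11}i_{12}}^{(r_1)}\dotsb x_{i_{k1}i_{k2}}^{(r_k)}) = \sum_{\substack{\sigma \in NC(k)\\ \widetilde\sigma \leq \ker \mathbf i}} \varphi\bigl(\kappa_E^{(\sigma)}[x_{11}^{(r_1)},\dotsc,x_{11}^{(r_k)}]\bigr).
\end{equation*}
Because the fattening $\sigma \mapsto \widetilde\sigma$ is a bijection of $NC(k)$ onto $NC_2(2k)$, I may define $c_{\pi,\mathbf r} = \varphi\bigl(\kappa_E^{(\sigma)}[x_{11}^{(r_1)},\dotsc,x_{11}^{(r_k)}]\bigr)$ for $\pi = \widetilde\sigma$; these are genuine scalars depending only on $\pi$ and $\mathbf r$, and the right-hand side becomes $\sum_{\pi \in NC_2(2k),\ \pi \leq \ker \mathbf i} c_{\pi,\mathbf r}$. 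By Theorem \ref{easyinv} this is precisely the statement that $X_1,\dotsc,X_s$ is $O_n^+$-invariant.

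I expect the main obstacle to lie entirely in the inductive lemma: concretely, verifying that the $\mc B$-valued factor produced by an inner block is truly index-independent and can be reabsorbed into the outer cumulant without disturbing the induction hypothesis. This is the point where the recursive (nested) structure of $\kappa_E^{(\sigma)}$ must be reconciled with uniform $R$-cyclicity, and where one needs the bookkeeping showing that $\widetilde\sigma \leq \ker \mathbf i$ splits as the conjunction of the per-block composability conditions. The remaining ingredients — $\varphi = \varphi \circ E$, the moment-cumulant formula, and the fattening bijection — are available from the earlier sections and combine formally.
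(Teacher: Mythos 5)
Your proposal is correct and follows essentially the same route as the paper: write $\varphi = \varphi\circ E$, expand via the operator-valued moment-cumulant formula, use that uniform $R$-cyclicity makes $\kappa_E^{(\sigma)}$ vanish unless $\widetilde\sigma\leq\ker\mathbf i$ and otherwise index-independent, and then set $c_{\widetilde\sigma,\mathbf r}=\varphi(\kappa_E^{(\sigma)}[x_{11}^{(r_1)},\dotsc,x_{11}^{(r_k)}])$ and invoke Theorem \ref{easyinv} via the fattening bijection $NC(k)\to NC_2(2k)$. The only difference is that you supply an inductive proof of the nested-cumulant lemma, which the paper simply cites as a fact recalled from Section \ref{sec:rcyclic}; your induction on blocks, using the per-block decomposition of the condition $\widetilde\sigma\leq\ker\mathbf i$, is the right way to verify it.
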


\begin{proof}
Let $1 \leq i_{11},i_{12},\dotsc,i_{k2} \leq n$ and $1 \leq r_1,\dotsc,r_k \leq s$, then we have
\begin{align*}
 \varphi(x_{i_{11}i_{12}}^{(r_1)}\dotsb x_{i_{k1}i_{k2}}^{(r_k)}) &= \varphi(E[x_{i_{11}i_{12}}^{(r_1)}\dotsb x_{i_{k1}i_{k2}}^{(r_k)}])\\
&= \sum_{\pi \in NC(k)} \varphi(\kappa_E^{(\pi)}[x_{i_{11}i_{12}}^{(r_1)},\dotsc,x_{i_{k1}i_{k2}}^{(r_k)}]).
\end{align*}
Now recall from Section \ref{sec:rcyclic} that
\begin{equation*}
 \kappa_E^{(\pi)}[x_{i_{11}i_{12}}^{(r_1)},\dotsc,x_{i_{k1}i_{k2}}^{(r_k)}] = \begin{cases} \kappa_E^{(\pi)}[x_{11}^{(r_1)},\dotsc,x_{11}^{(r_k)}], & \widetilde \pi \leq \ker \mathbf i\\ 
                                                               0, &\text{otherwise}
                                                              \end{cases},
\end{equation*}
so we have
\begin{equation*}
 \varphi(x_{i_{11}i_{12}}^{(r_1)}\dotsb x_{i_{k1}i_{k2}}^{(r_k)}) = \sum_{\substack{\pi \in NC(k)\\ \widetilde \pi \leq \ker \mathbf i}} \varphi(\kappa_E^{(\pi)}[x_{11}^{(r_1)},\dotsc,x_{11}^{(r_k)}])
\end{equation*}

Setting $c_{\widetilde \pi,\mathbf r} = \varphi(\kappa_E^{(\pi)}[x_{11}^{(r_1)},\dotsc,x_{11}^{(r_k)}])$ for $\pi \in NC(k)$, and replacing the sum over $\pi \in NC(k)$ by the sum over $\widetilde \pi \in NC_2(2k)$, we see that $X_1,\dotsc,X_s$ satisfy the criterion for $O_n^+$-invariance given in Theorem \ref{easyinv}.
\end{proof}

We now complete the proof of Theorem \ref{orthogonalcase}.

\begin{proof}[Proof of Theorem \ref{orthogonalcase}]
It remains only to show the implication (1) $\Rightarrow$ (2).  Let $\mc B$ be the $O^+$-invariant subalgebra introduced in Section \ref{sec:qinv}.  Let $b_0,\dotsc,b_k \in \mc B$, $i_{11},\dotsc,i_{k2} \in \N$ and $1 \leq r_1,\dotsc,r_k \leq s$.  From Theorem \ref{limitcumulants} we have
\begin{equation*}
 \kappa_E^{(k)}[b_0x_{j_{11}j_{12}}^{(r_1)}b_1,\dotsc,x_{j_{k1}j_{k2}}^{(r_k)}b_k] = \lim_{n \to \infty} \sum_{\substack{\sigma \in NC_2(2k)\\ \sigma \leq \ker \mathbf j\\ \sigma \vee \hat 0_k = 1_{2k}}} n^{-|\sigma|}\sum_{\substack{1 \leq i_{11},\dotsc,i_{k2} \leq n\\ \sigma \leq \ker \mathbf i}} b_0x_{i_{11}i_{12}}^{(r_1)}\dotsb x_{i_{k1}i_{k2}}^{(r_k)}b_k,
\end{equation*}
note that we have simplified the formula by using the fact that $\pi \leq \sigma \Rightarrow \pi = \sigma$ for $\pi,\sigma \in NC_2(2k)$.  Now the only $\sigma \in NC_2(2k)$ which satisfies $\sigma \vee \hat 0_k = \hat 1_{2k}$ is $\widetilde 1_k$, so that
\begin{equation*}
\kappa_E^{(k)}[b_0x_{j_{11}j_{12}}^{(r_1)}b_1,\dotsc,x_{j_{k1}j_{k2}}^{(r_k)}b_k] = 0
\end{equation*}
unless $\widetilde 1_k \leq \ker \mathbf j$, in which case we have
\begin{equation*}
\kappa_E^{(k)}[b_0x_{j_{11}j_{12}}^{(r_1)}b_1,\dotsc,x_{j_{k1}j_{k2}}^{(r_k)}b_k] = \lim_{n \to \infty} n^{-k}\sum_{1 \leq i_1,\dotsc,i_k \leq n} b_0x_{i_{1}i_{2}}^{(r_1)}b_1x_{i_2i_3}^{(r_2)}\dotsb x_{i_{k}i_{1}}^{(r_k)}b_k.
\end{equation*}
Since the right hand side does not depend on the indices $j_{11},\dotsc,j_{k2}$, we have
\begin{equation*}
 \kappa_E^{(k)}[b_0x_{j_{11}j_{12}}^{(r_1)}b_1,\dotsc,x_{j_{k1}j_{k2}}^{(r_k)}b_k] = \begin{cases} \kappa_E^{(k)}[b_0x_{11}^{(r_1)}b_1,\dotsc,x_{11}^{(r_k)}b_k], & \widetilde 1_k \leq \ker \mathbf j\\
                                                                         0, &\widetilde 1_k \not\leq \ker \mathbf j
                                                                        \end{cases},
\end{equation*}
so that $X_1,\dotsc,X_s$ form a uniformly $R$-cyclic family with respect to $E$ as claimed.
\end{proof}

\section{The free hyperoctahedral case}\label{sec:h+case}

In this section we will consider $H^+$-invariant families and prove Theorem \ref{hyperoctahedralcase}.  First let us give a rigorous definition for the determining series $\theta_X$ of an $R$-cyclic family $X_1,\dotsc,X_s$ to be invariant under quantum permutations.  

\begin{definition}
Let $(\mc A, E:\mc A \to \mc B)$ be an operator-valued probability space, and let $X_1,\dotsc,X_s$ be a $\mc B$-valued $R$-cyclic family of matrices in $M_n(\mc A)$, $X_r = (x_{ij}^{(r)})_{1 \leq i,j \leq n}$.  We say that the $\mc B$-valued determining series of $X_1,\dotsc,X_s$ is \textit{invariant under quantum permutations} if $\theta_X$ is invariant under the coaction $\alpha: \mc B\langle t_i^{(r)}:1 \leq i \leq n, 1 \leq r \leq s \rangle \to \mc B \langle t_i^{(r)}: 1 \leq i \leq n, 1 \leq r \leq s \rangle \otimes C(S_n^+)$ determined by
\begin{align*}
\alpha(b) &= b \otimes 1_{C(S_n^+)}, &  &(b \in \mc B)\\
\alpha(t_j^{(r)}) &= \sum_{i=1}^n t_{i}^{(r)} \otimes u_{ij}, & &(1 \leq j \leq n, 1 \leq r \leq s).
\end{align*}
Explicitly, we require that for any $k \in \N$, $1 \leq j_1,\dotsc,j_k \leq n$, $1 \leq r_1,\dotsc,r_k \leq s$ and $b_1,\dotsc,b_k \in \mc B$ we have
\begin{equation*}
 \sum_{1 \leq i_1,\dotsc,i_k \leq n} \kappa_E^{(k)}[x_{i_ki_1}^{(r_1)}b_1,\dotsc,x_{i_{k-1}i_k}^{(r_k)}b_k] \otimes u_{i_1j_1}\dotsb u_{i_kj_k} = \kappa_{E}^{(k)}[x_{j_kj_1}^{(r_1)}b_1,\dotsc,x_{j_{k-1}j_k}^{(r_k)}b_k] \otimes 1_{C(S_n^+)}
\end{equation*}
as an equality in $\mc B \otimes C(S_n^+)$. 
\end{definition}

\begin{lemma}\label{detserinv}
Let $(\mc A,E:\mc A \to \mc B)$ be an operator-valued probability space, and let $X_1,\dotsc,X_s$ be a $\mc B$-valued $R$-cyclic family in $M_n(\mc A)$, $X_r = (x_{ij}^{(r)})_{1 \leq i,j \leq n}$.  Then the determining series of $X_1,\dotsc,X_s$ is invariant under quantum permutations if and only if for every $k \in \N$, $1 \leq r_1,\dotsc,r_k \leq s$ and $\sigma \in NC(k)$ there are $\C$-multilinear maps $c_{\sigma,\mathbf r}:\mc B^k \to \mc B$ such that
\begin{equation*}
 \kappa_E^{(k)}[x_{i_{11}i_{12}}^{(r_1)}b_1,\dotsc,x_{i_{k1}i_{k2}}^{(r_k)}b_k] = \sum_{\substack{\sigma \in NC(k)\\ \widetilde \sigma \vee \widetilde{1_k} \leq \ker \mathbf i}} c_{\sigma,\mathbf r}[b_1,\dotsc,b_k].
\end{equation*}

\end{lemma}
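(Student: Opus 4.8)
The plan is to read the invariance of the determining series as an operator-valued ``easiness'' condition for $S_n^+$ imposed on a single-indexed family — the one whose $\mc B$-valued moments are the cyclic cumulants of $X_1,\dots,X_s$ — and then to transport that condition across the fattening/Kreweras dictionary of Lemma \ref{fatfacts}. First I would note that, straight from the definition, invariance of $\theta_X$ under quantum permutations is exactly the statement $(\theta_X \otimes \mathrm{id})\alpha(p) = \theta_X(p) \otimes 1$ for every monomial $p \in \mc B\langle t_i^{(r)}\rangle$. For fixed $k$ and $\mathbf r = (r_1,\dots,r_k)$ I package the cyclic cumulants into the $\mc B$-valued vector
\[
 \vartheta_{\mathbf r}[\mathbf b] = \negthickspace\sum_{1 \le i_1,\dots,i_k \le n}\negthickspace \kappa_E^{(k)}[x_{i_k i_1}^{(r_1)}b_1,\dots,x_{i_{k-1}i_k}^{(r_k)}b_k]\, e_{i_1}\otimes\dots\otimes e_{i_k} \in \mc B \otimes (\C^n)^{\otimes k}.
\]
Running the argument of Lemma \ref{invfix} verbatim — the automorphism $\Psi(f) = S(f)^*$ converts the coaction identity into the assertion that $\vartheta_{\mathbf r}[\mathbf b]$ is fixed by $u^{\otimes k}$, and the $\mc B$-valued coefficients are harmless since $\mc B \otimes 1$ commutes with $1 \otimes C(S_n^+)$ and $\Psi$ acts only on the second leg — this reduces invariance to the membership $\vartheta_{\mathbf r}[\mathbf b] \in \mc B \otimes \mathrm{Fix}(u^{\otimes k})$.

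Next, specializing the easiness description to $G = S_n^+$ gives $\mathrm{Fix}(u^{\otimes k}) = \mathrm{span}\{T_\rho : \rho \in NC(k)\}$. Decomposing $\vartheta_{\mathbf r}[\mathbf b]$ in a basis of this space and equating $\mc B$-valued coefficients of the $e_{i_1}\otimes\dots\otimes e_{i_k}$, exactly as in the proof of Theorem \ref{easyinv}, I obtain that invariance is equivalent to the existence, for each $k$ and $\mathbf r$, of $\C$-multilinear maps $\tilde c_{\rho,\mathbf r} : \mc B^k \to \mc B$ ($\rho \in NC(k)$) with
\[
 \kappa_E^{(k)}[x_{i_k i_1}^{(r_1)}b_1,\dots,x_{i_{k-1}i_k}^{(r_k)}b_k] = \sum_{\substack{\rho \in NC(k)\\ \rho \le \ker(i_1,\dots,i_k)}} \tilde c_{\rho,\mathbf r}[b_1,\dots,b_k];
\]
multilinearity of the $\tilde c_{\rho,\mathbf r}$ is inherited from that of $\kappa_E^{(k)}$, and they are well-defined once a basis of the fixed space is chosen.

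It then remains to match this ``cyclic'' identity with the identity in the statement, and here the fattening procedure does the work. Using $R$-cyclicity in the block $\sigma = 1_k$, the left-hand side $\kappa_E^{(k)}[x_{i_{11}i_{12}}^{(r_1)}b_1,\dots,x_{i_{k1}i_{k2}}^{(r_k)}b_k]$ vanishes unless $\widetilde{1_k} \le \ker\mathbf i$, while the right-hand side $\sum_{\widetilde\sigma\vee\widetilde{1_k}\le\ker\mathbf i} c_{\sigma,\mathbf r}[\mathbf b]$ is an empty sum whenever $\widetilde{1_k}\not\le\ker\mathbf i$; so both sides vanish off the cyclic locus and it suffices to treat $\widetilde{1_k} \le \ker\mathbf i$. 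On this locus the entries take the cyclic form $x_{i_k i_1}^{(r_1)},\dots,x_{i_{k-1}i_k}^{(r_k)}$ with $\mathbf i = (i_k,i_1,i_1,i_2,\dots,i_{k-1},i_k)$, and since $\widetilde{1_k}\le\ker\mathbf i$ already holds, $\widetilde\sigma\vee\widetilde{1_k}\le\ker\mathbf i$ collapses to $\widetilde\sigma\le\ker\mathbf i$; by Lemma \ref{fatfacts} (used precisely as in the proof of Lemma \ref{cycliccumulants}) this is equivalent to $K(\sigma)\le\ker(i_1,\dots,i_k)$. Putting $\rho = K(\sigma)$, a bijection of $NC(k)$, and setting $c_{\sigma,\mathbf r} := \tilde c_{K(\sigma),\mathbf r}$, the cyclic identity of the previous step becomes exactly the asserted identity, and the reasoning is reversible.

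I expect the main obstacle to be the bookkeeping in passing between the $2k$-point index regime, in which $R$-cyclicity and the target formula are phrased, and the $k$-point regime in which $S_n^+$-easiness naturally lives; the Kreweras complement is what reconciles the two, and the only genuine care is needed in verifying that $\widetilde\sigma\vee\widetilde{1_k}\le\ker\mathbf i \Leftrightarrow K(\sigma)\le\ker(i_1,\dots,i_k)$ on the cyclic locus and that the coefficients extracted from the fixed-space decomposition are honestly multilinear.
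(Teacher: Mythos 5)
Your proposal is correct and follows essentially the same route as the paper's proof: the same Kreweras/fattening translation $\widetilde \sigma \vee \widetilde{1_k} \leq \ker \mathbf i \Leftrightarrow K(\sigma) \leq \ker(i_1,\dotsc,i_k)$ from Lemma \ref{fatfacts}, and the same underlying fact that the invariance of $\theta_X$ amounts to the cyclic-cumulant data lying in $\mc B \otimes \mathrm{span}\{T_\rho : \rho \in NC(k)\} = \mc B \otimes Fix(u^{\otimes k})$. The only difference is packaging: the paper proves the two implications separately, using $\sum_{K(\sigma) \leq \ker \mathbf i} u_{i_1j_1}\dotsb u_{i_kj_k} = \delta_{K(\sigma) \leq \ker \mathbf j}\cdot 1_{C(S_n^+)}$ for one direction and an explicit Weingarten inversion to define $c_{\sigma,\mathbf r}$ for the other, whereas you extract the coefficients at once by decomposing in the basis $\{T_\rho\}$ (linearly independent since $n \geq 4$) — the same inversion in disguise — and your explicit treatment of the off-cyclic locus via $R$-cyclicity makes precise a point the paper leaves implicit.
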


\begin{proof}
First use Lemma \ref{fatfacts} to find that
\begin{equation*}
 \widetilde \sigma \vee \widetilde{1_k} = \overrightarrow{\overleftarrow{\widetilde \sigma} \vee \overleftarrow{\widetilde{1_k}}} = \overrightarrow{\widetilde{K(\sigma)} \vee \widetilde 0_k} = \overrightarrow{\widehat{K(\sigma)}}.
\end{equation*}
In particular,
\begin{equation*}
\widetilde \sigma \vee \widetilde{1_k} \leq \ker (i_k,i_1,i_1,i_2,\dotsc,i_{k-1},i_k) \Leftrightarrow K(\sigma) \leq \ker (i_1,\dotsc,i_k).
\end{equation*}

Now suppose that there are multilinear maps $c_\sigma$ as in the statement of the lemma.  From the remark above, we have
\begin{multline*}
\sum_{1 \leq i_1,\dotsc,i_k \leq n} \kappa_E^{(k)}[x_{i_ki_1}^{(r_1)}b_1,\dotsc,x_{i_{k-1}i_k}^{(r_k)}b_k] \otimes u_{i_1j_1}\dotsb u_{i_kj_k} \\
= \sum_{1 \leq i_1,\dotsc,i_k \leq n} \sum_{\substack{\sigma \in NC(k)\\ K(\sigma) \leq \ker \mathbf i}} c_{\sigma,\mathbf r}[b_1,\dotsc,b_k] \otimes u_{i_1j_1}\dotsb u_{i_kj_k}\\
= \sum_{\sigma \in NC(k)} c_{\sigma,\mathbf r}[b_1,\dotsc,b_k] \otimes \sum_{\substack{1 \leq i_1,\dotsc,i_k \leq n \\ K(\sigma) \leq \ker \mathbf i}} u_{i_1j_1}\dotsb u_{i_kj_k}.
\end{multline*}
Now
\begin{equation*}
 \sum_{\substack{1 \leq i_1,\dotsc,i_k \leq n \\ K(\sigma) \leq \ker \mathbf i}} u_{i_1j_1}\dotsb u_{i_kj_k} = \begin{cases} 1_{C(S_n^+)}, & K(\sigma) \leq \ker \mathbf j\\ 0, & K(\sigma) \not \leq \ker \mathbf j\end{cases},
\end{equation*}
indeed this is equivalent to the fact that $T_{K(\sigma)} \in Fix(u^{\otimes k})$ (see Section \ref{sec:easy}).  This can also be checked directly by using the relations in $C(S_n^+)$ and inducting on the number of blocks of $K(\sigma)$.  It follows that
\begin{align*}
\sum_{1 \leq i_1,\dotsc,i_k \leq n} \kappa_E^{(k)}[x_{i_ki_1}^{(r_1)}b_1,\dotsc,x_{i_{k-1}i_k}^{(r_k)}b_k] \otimes u_{i_1j_1}\dotsb u_{i_kj_k} &= \sum_{\substack{\sigma \in NC(k)\\ K(\sigma) \leq \ker \mathbf j}} c_{\sigma,\mathbf r}[b_1,\dotsc,b_k] \otimes 1_{C(S_n^+)}\\
&= \kappa_E^{(k)}[x_{j_kj_1}^{(r_1)}b_1,\dotsc,x_{j_{k-1}j_k}^{(r_k)}b_k] \otimes 1_{C(S_n^+)},
\end{align*}
so that the determining series of $X_1,\dotsc,X_s$ is invariant under quantum permutations.

Conversely, suppose that the determining series of $X_1,\dotsc,X_s$ is invariant under permutations, so that
\begin{equation*}
\kappa_E^{(k)}[x_{j_{k}j_1}^{(r_1)}b_1,\dotsc,x_{j_{k-1}j_k}^{(r_k)}b_k] \otimes 1_{C(S_n^+)} = \sum_{1 \leq i_1,\dotsc,i_k \leq n} \kappa_E^{(k)}[x_{i_ki_1}^{(r_1)}b_1,\dotsc,x_{i_{k-1}i_k}^{(r_k)}b_k] \otimes u_{i_1j_1}\dotsb u_{i_kj_k}.
\end{equation*}
Apply $(\mathrm{id} \otimes \int)$ to both sides and expand using Weingarten:
\begin{equation*}
 \kappa_E^{(k)}[x_{j_kj_1}^{(r_1)}b_1,\dotsc,x_{j_{k-1}j_k}^{(r_k)}b_k] = \sum_{\substack{\sigma,\pi \in NC(k)\\ K(\sigma) \leq \ker \mathbf j}}  W_{NC(k),n}(\pi,K(\sigma))\sum_{\substack{1 \leq i_1,\dotsc,i_k \leq n\\K(\pi) \leq \ker \mathbf i}} \kappa_E^{(k)}[x_{i_ki_1}^{(r_1)}b_1,\dotsc,x_{i_{k-1}i_k}^{(r_k)}b_k].
\end{equation*}
The result now follows by setting
\begin{equation*}
 c_{\sigma,\mathbf r}[b_1,\dotsc,b_k] = \sum_{\pi \in NC(k)} W_{NC(k),n}(\pi,K(\sigma))\sum_{\substack{1 \leq i_1,\dotsc,i_k \leq n\\K(\pi) \leq \ker \mathbf i}} \kappa_E^{(k)}[x_{i_ki_1}^{(r_1)}b_1,\dotsc,x_{i_{k-1}i_k}^{(r_k)}b_k].
\end{equation*}

\end{proof}

As discussed in Remark \ref{passage}, to prove Theorem \ref{hyperoctahedralcase} we will need to relate $NC_h(2k)$ with $NC(k)$.

\begin{lemma}\label{h+parts} If $\pi \in NC_h(2k)$, there are unique $\pi_1,\pi_2 \in NC(k)$ such that $\pi_1 \leq \pi_2$ and $\pi = \widetilde{\pi_1} \vee \widetilde{\pi_2}$.  Moreover, if $\sigma = \widetilde \sigma_1 \vee \widetilde \sigma_2$, $\pi = \widetilde \pi_1 \vee \widetilde \pi_2$ for $\sigma_1 \leq \sigma_2$ and $\pi_1 \leq \pi_2$ in $NC(k)$, then $\pi \leq \sigma$ if and only if $\sigma_1 \leq \pi_1 \leq \pi_2 \leq \sigma_2$.  In this case,
\begin{equation*}
 \mu(\pi,\sigma) = \mu(\sigma_1,\pi_1) \cdot \mu(\pi_2,\sigma_2).
\end{equation*}
\end{lemma}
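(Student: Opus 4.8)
The plan is to transport the whole statement across the Kreweras complement $K$, which by Lemma \ref{fatfacts}(2) converts the ``fattening--join'' picture into the $\wr$ picture: if $\pi_1 \leq \pi_2$ in $NC(k)$ then $\widetilde{\pi_1} \vee \widetilde{\pi_2} \in NC_h(2k)$ and $K(\widetilde{\pi_1} \vee \widetilde{\pi_2}) = \pi_1 \wr K(\pi_2)$. Thus applying $K$ turns the map $(\pi_1,\pi_2) \mapsto \widetilde{\pi_1}\vee\widetilde{\pi_2}$ into $(\pi_1,\pi_2)\mapsto \pi_1 \wr K(\pi_2)$, whose image lands among the \emph{parity-split} partitions --- those $\rho \in NC(2k)$ each of whose blocks consists entirely of odd or entirely of even points, equivalently $\rho = \alpha \wr \beta$ for some $\alpha,\beta \in NC(k)$. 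I would first record that $\alpha\wr\beta$ is noncrossing precisely when $\beta \leq K(\alpha)$, which is the defining property of the Kreweras complement.

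For existence and uniqueness I would argue as follows. Uniqueness is immediate: if $\pi = \widetilde{\pi_1}\vee\widetilde{\pi_2} = \widetilde{\pi_1'}\vee\widetilde{\pi_2'}$ with $\pi_1\leq\pi_2$ and $\pi_1'\leq\pi_2'$, then $\pi_1 \wr K(\pi_2) = K(\pi) = \pi_1' \wr K(\pi_2')$, and reading off the odd and even points of this split partition gives $\pi_1 = \pi_1'$ and $K(\pi_2) = K(\pi_2')$, hence $\pi_2 = \pi_2'$ since $K$ is a bijection. For existence, the key claim is that $K(\pi)$ is parity-split for every $\pi \in NC_h(2k)$. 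This follows straight from the definition of the Kreweras complement: if two gaps $\bar\imath, \bar\jmath$ (with $i<j$) lie in the same block of $K(\pi)$, then no block of $\pi$ crosses the chord joining them, so the arc $\{i+1,\dots,j\}$ is a union of blocks of $\pi$; since every block of $\pi$ has even size, this arc has even cardinality, whence $i \equiv j \pmod 2$ and the two gaps have the same parity. Given this, write $K(\pi) = \alpha \wr \beta$, set $\pi_1 = \alpha$, and let $\pi_2$ be the unique partition with $K(\pi_2) = \beta$; noncrossingness of $K(\pi)$ gives $\beta \leq K(\alpha)$, and applying $K^{-1}$ (order-reversing) yields $\pi_1 \leq \pi_2$. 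By Lemma \ref{fatfacts}(2), $K(\widetilde{\pi_1}\vee\widetilde{\pi_2}) = \pi_1 \wr K(\pi_2) = K(\pi)$, so $\pi = \widetilde{\pi_1}\vee\widetilde{\pi_2}$.

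For the order characterization, apply the anti-automorphism $K$ to get $\pi \leq \sigma \Leftrightarrow K(\sigma) \leq K(\pi)$, i.e. $\sigma_1 \wr K(\sigma_2) \leq \pi_1 \wr K(\pi_2)$. Since both sides are parity-split, the order is componentwise, so this holds iff $\sigma_1 \leq \pi_1$ (on the odd points) and $K(\sigma_2) \leq K(\pi_2)$ (on the even points); the latter is equivalent to $\pi_2 \leq \sigma_2$ because $K$ reverses order. Combined with the standing assumptions $\sigma_1 \leq \sigma_2$ and $\pi_1 \leq \pi_2$, this is exactly the chain $\sigma_1 \leq \pi_1 \leq \pi_2 \leq \sigma_2$.

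Finally, for the Möbius identity I would use that $K$, being a lattice anti-automorphism of $NC(2k)$, preserves Möbius values of intervals: $\mu(\pi,\sigma) = \mu(K(\sigma),K(\pi))$. The crucial point is that the interval $[K(\sigma),K(\pi)]$ in $NC(2k)$ decomposes as a product. Indeed, any $\rho$ with $K(\sigma) \leq \rho \leq K(\pi)$ is a refinement of the parity-split partition $K(\pi)$, hence itself parity-split, say $\rho = \rho_o \wr \rho_e$; conversely, for $\rho_o \in [\sigma_1,\pi_1]$ and $\rho_e \in [K(\sigma_2),K(\pi_2)]$ one checks $\rho_e \leq K(\pi_2) \leq K(\pi_1) \leq K(\rho_o)$, so $\rho_o \wr \rho_e$ is automatically noncrossing and lies in the interval. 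Thus $[K(\sigma),K(\pi)] \cong [\sigma_1,\pi_1] \times [K(\sigma_2),K(\pi_2)]$ as posets (both intervals taken in $NC(k)$), and the product formula for Möbius functions gives
\begin{equation*}
\mu(\pi,\sigma) = \mu(\sigma_1,\pi_1)\cdot \mu\bigl(K(\sigma_2),K(\pi_2)\bigr) = \mu(\sigma_1,\pi_1)\cdot\mu(\pi_2,\sigma_2),
\end{equation*}
the last equality again by the anti-automorphism $K$ on $NC(k)$. I expect the existence claim (the parity-split property of $K(\pi)$, where one must be careful with the cyclic convention for $K$) and the box-decomposition of the interval to be the two steps needing the most care; the remainder is bookkeeping with Lemma \ref{fatfacts}.
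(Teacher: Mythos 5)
Your proof is correct and follows essentially the same route as the paper's: pass everything through the Kreweras complement via Lemma \ref{fatfacts}(2), identify $K(\pi)$ as a parity-split partition $\pi_1\wr K(\pi_2)$, read off the componentwise order, and factor the interval to get the M\"obius identity. The only difference is that you supply details (the gap-parity argument showing $K(\pi)$ splits by parity, and the verification that the box decomposition lands in $NC(2k)$) which the paper asserts without proof.
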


\begin{proof}
Let $\pi \in NC_h(2k)$, then since each block of $\pi$ has an even number of elements we have
\begin{equation*}
 K(\pi) = \pi_1 \wr K(\pi_2)
\end{equation*}
for some $\pi_1,\pi_2 \in NC(k)$ with $\pi_1 \leq \pi_2$.  It follows from the Lemma \ref{fatfacts} that $\pi = \widetilde \pi_1 \vee \widetilde \pi_2$.  The equation above shows that $\pi_1,\pi_2$ are uniquely determined.    

Now suppose that $\sigma = \widetilde \sigma_1 \vee \widetilde \sigma_2$, $\pi = \widetilde \pi_1 \vee \widetilde \pi_2$.  Then
\begin{equation*}
 \pi \leq \sigma \Leftrightarrow K(\sigma) \leq K(\pi) \Leftrightarrow \sigma_1 \wr K(\sigma_2) \leq \pi_1 \wr K(\pi_2) \Leftrightarrow \sigma_1 \leq \pi_1 \leq \pi_2 \leq \sigma_2.
\end{equation*}
Finally, if $\sigma_1 \leq \pi_1 \leq \pi_2 \leq \sigma_2$ then
\begin{equation*}
 \mu(\pi,\sigma) = \mu(K(\sigma),K(\pi)) = \mu(\sigma_1 \wr K(\sigma_2),\pi_1\wr K(\pi_2)).
\end{equation*}
Now it is clear that the interval $[\sigma_1 \wr K(\sigma_2), \pi_1 \wr K(\pi_2)]$ in $NC(2k)$ factors as $[\sigma_1,\pi_1] \times [K(\sigma_2),K(\pi_2)]$, and so by the multiplicativity of the M\"{o}bius function we have
\begin{equation*}
 \mu(\sigma_1 \wr K(\sigma_2),\pi_1\wr K(\pi_2)) = \mu(\sigma_1,\pi_1)\cdot \mu(K(\sigma_2),K(\pi_2)) = \mu(\sigma_1, \pi_1)\cdot \mu(\pi_2,\sigma_2).
\end{equation*}

\end{proof}

We can now prove the implication $(1) \Rightarrow (2)$ in Theorem \ref{hyperoctahedralcase}.  As in the free orthogonal case, this holds for finite matrices and in a purely algebraic setting.

\begin{proposition}\label{h+finite}
Let $X_1,\dotsc,X_s$ be a family of matrices in $M_n(\mc A)$, $X_r = (x_{ij}^{(r)})_{1 \leq i,j \leq n}$.  Suppose that there is a subalgebra $1 \in \mc B \subset \mc A$ and a $\varphi$-preserving conditional expectation $E:\mc A \to \mc B$ such that $X_1,\dotsc,X_s$ is $R$-cyclic with respect to $E$, and $\Theta_X$ is invariant under quantum permutations.  Then the family $X_1,\dotsc,X_s$ is $H_n^+$-invariant.
\end{proposition}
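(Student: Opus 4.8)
The plan is to verify the combinatorial criterion for $H_n^+$-invariance supplied by Theorem \ref{easyinv}. Since the partitions associated to $H_n^+$ are $D(2k) = NC_h(2k)$, it suffices to produce, for each $k \in \N$ and each tuple $\mathbf r = (r_1,\dotsc,r_k)$, scalars $\{c_{\pi,\mathbf r} : \pi \in NC_h(2k)\}$ with
\[
\varphi(x_{i_{11}i_{12}}^{(r_1)}\dotsb x_{i_{k1}i_{k2}}^{(r_k)}) = \sum_{\substack{\pi \in NC_h(2k)\\ \pi \le \ker \mathbf i}} c_{\pi,\mathbf r}
\]
for all index tuples. First I would use that $E$ is $\varphi$-preserving to write $\varphi = \varphi \circ E$, and apply the moment-cumulant formula to expand the left side as $\sum_{\sigma \in NC(k)} \varphi\bigl(\kappa_E^{(\sigma)}[x_{i_{11}i_{12}}^{(r_1)},\dotsc,x_{i_{k1}i_{k2}}^{(r_k)}]\bigr)$.

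The next step is to expand each term using the two hypotheses. By $R$-cyclicity the summand vanishes unless $\widetilde\sigma \le \ker\mathbf i$, and $\kappa_E^{(\sigma)}$ is a nested product of full block cumulants over the blocks of $\sigma$. To each block cumulant I would apply Lemma \ref{detserinv}, which is precisely where the hypothesis that $\Theta_X$ is invariant under quantum permutations enters: a full cumulant of size $m$ on a block $V$ expands as a sum over $\rho_V \in NC(V)$ of the quantities $c_{\rho_V,\mathbf r|_V}$, with the $\rho_V$-term present exactly when $\widetilde{\rho_V} \vee \widetilde{1_V} \le \ker\mathbf i|_V$. Performing this on every block, collecting the choices $\{\rho_V\}$ into a single $\rho \le \sigma$ in $NC(k)$, and applying $\varphi$ at the end, I obtain an expansion of the moment as a sum over pairs $(\sigma,\rho)$ with $\rho \le \sigma$ of scalars $D_{\sigma,\rho,\mathbf r}$ that do not depend on the indices, each present exactly when all of the per-block conditions hold. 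The key reason the coefficients are genuinely index-free is that in Lemma \ref{detserinv} the value $c_{\rho_V,\mathbf r|_V}[\dotsc]$ depends on the indices only through its presence condition, never through its value, so the nested insertions (which all lie in $\mc B$) carry no index dependence.

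The combinatorial heart, and the step I expect to be the main obstacle, is to show that these per-block conditions assemble into the single relation $\widetilde\rho \vee \widetilde\sigma \le \ker\mathbf i$. The crucial observations are that $\widetilde\sigma$ restricts to the cyclic pairing $\widetilde{1_V}$ on the doubled points of each $\sigma$-block $V$, and that since $\rho \le \sigma$ the partition $\widetilde\rho$ is block-diagonal with respect to the doubled $\sigma$-blocks, with restriction $\widetilde{\rho|_V}$. Hence $\widetilde\rho \vee \widetilde\sigma$ is again block-diagonal, equal to $\widetilde{\rho|_V} \vee \widetilde{1_V}$ on each doubled block; and a partition lying below a coarser one is dominated by $\ker\mathbf i$ if and only if each of its restrictions is. This gives the claimed equivalence.

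Finally I would invoke Lemma \ref{h+parts}: the map $(\sigma,\rho) \mapsto \pi = \widetilde\rho \vee \widetilde\sigma$ is a bijection from pairs $\rho \le \sigma$ in $NC(k)$ onto $NC_h(2k)$, under which the condition becomes $\pi \le \ker\mathbf i$. Setting $c_{\pi,\mathbf r} = D_{\sigma,\rho,\mathbf r}$ via this bijection rewrites the moment in the required form, and Theorem \ref{easyinv} then yields that $X_1,\dotsc,X_s$ is $H_n^+$-invariant.
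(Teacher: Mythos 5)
Your proposal is correct and follows essentially the same route as the paper's proof: expand $\varphi\circ E$ via the moment--cumulant formula over $NC(k)$, apply Lemma \ref{detserinv} blockwise to each nested full cumulant (the paper packages this as the recursive definition of maps $c_{\sigma,\pi,\mathbf r}$ mirroring that of $\kappa_E^{(\pi)}$), check that the per-block conditions assemble into $\widetilde\sigma\vee\widetilde\pi\le\ker\mathbf i$, and then reindex by $NC_h(2k)$ via the bijection of Lemma \ref{h+parts} before invoking Theorem \ref{easyinv}. The only difference is presentational: you spell out the block-diagonality argument for merging the restriction conditions, which the paper leaves implicit in its ``comparing the recursive definitions'' step.
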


\begin{proof}
By Lemma \ref{detserinv}, there are multilinear maps $c_{\sigma,\mathbf r}:\mc B^k \to \mc B$ for $k \in \N$, $1 \leq r_1,\dotsc,r_k \leq s$ and $\sigma \in NC(k)$ such that
\begin{equation*}
\kappa_E^{(k)}[x_{i_{11}i_{12}}^{(r_1)}b_1,\dotsc,x_{i_{k1}i_{k2}}^{(r_k)}b_k] = \sum_{\substack{\sigma \in NC(k)\\ \widetilde \sigma \vee \widetilde{1_k} \leq \ker \mathbf i}} c_{\sigma,\mathbf r}[b_1,\dotsc,b_k]
\end{equation*}
for any $b_1,\dotsc,b_k \in \mc B$ and $1 \leq i_1,\dotsc,i_k \leq n$.  For $\sigma, \pi \in NC(k)$, $\sigma \leq \pi$, define $c_{\sigma,\pi,\mathbf r}:\mc B^k \to \mc B$ recursively as follows.  If $\pi = 1_k$, $c_{\sigma,\pi,\mathbf r} = c_{\sigma,\mathbf r}$.  Otherwise let $V = \{l+1,\dotsc,l+s\}$ be an interval of $\pi$.  Let $\sigma|_V$ denote the restriction of $\sigma$ to $V$, and let $\sigma',\pi' \in NC(k-s)$ be the restrictions of $\sigma,\pi$ to $\{1,\dotsc,k\} \setminus V$. Let $\mathbf r' = r_1,\dotsc,r_l,r_{l+s+1},\dotsc,r_k$, $\mathbf r'' = r_{l+1},\dotsc,r_{l+s}$ and define
\begin{equation*}
 c_{\sigma,\pi,\mathbf r}[b_1,\dotsc,b_k] = c_{\sigma', \pi', \mathbf r'}[b_1,\dotsc,b_lc_{\sigma|_V,\mathbf r''}[b_{l+1},\dotsc,b_{l+s}],\dotsc,b_k]
\end{equation*}
for $b_1,\dotsc,b_k \in \mc B$.

Now let $\pi \in NC(k)$.  Comparing the recursive definitions of $\kappa_E^{(\pi)}$ and $c_{\sigma,\pi}$ as above, we find that
\begin{equation*}
 \kappa_E^{(\pi)}[x_{i_{11}i_{12}}^{(r_1)}b_1,\dotsc,x_{i_{k1}i_{k2}}^{(r_k)}b_k] = \sum_{\substack{\sigma\in NC(k)\\ \sigma \leq \pi\\ \widetilde \sigma \vee \widetilde \pi \leq \ker \mathbf i}} c_{\sigma,\pi,\mathbf r}[b_1,\dotsc,b_k].
\end{equation*}

If $\tau \in NC_h(2k)$, use Lemma \ref{h+parts} to find unique $\sigma,\pi \in NC(k)$ with $\sigma \leq \pi$ and $\widetilde \sigma \vee \widetilde \pi = \tau$, and define $c_{\tau,\mathbf r} = \varphi(c_{\sigma,\pi, \mathbf r}[1,\dotsc,1])$.  We then have
\begin{align*}
 \varphi(x_{i_{11}i_{12}}^{(r_1)}\dotsb x_{i_{k1}i_{k2}}^{(r_k)}) &= \sum_{\pi \in NC(k)} \varphi(\kappa_E^{(\pi)}[x_{i_{11}i_{12}}^{(r_1)},\dotsc,x_{i_{k1}i_{k2}}^{(r_k)}])\\
&= \sum_{\substack{\sigma, \pi \in NC(k)\\ \sigma \leq \pi\\ \widetilde \sigma \vee \widetilde \pi \leq \ker \mathbf i}} \varphi(c_{\sigma,\pi,\mathbf r}[1,\dotsc,1])\\
&= \sum_{\substack{\tau \in NC_h(2k)\\ \tau \leq \ker \mathbf i}} c_{\tau,\mathbf r},
\end{align*}
and the result follows from the characterization of $H_n^+$-invariant families in Theorem \ref{easyinv}.
\end{proof}

We will now complete the proof of Theorem \ref{hyperoctahedralcase} by showing $(1) \Rightarrow (2)$.

\begin{proof}[Proof of Theorem \ref{hyperoctahedralcase}]
Let $\mc B$ denote the $H^+$-invariant subalgebra introduced in Section \ref{sec:qinv}.  Let $b_1,\dotsc,b_k \in \mc B$, $1 \leq r_1,\dotsc,r_k \leq s$ and $j_{11},\dotsc,j_{k2} \in \N$, then from Theorem \ref{limitcumulants} we have
\begin{multline*}
\kappa_E^{(k)}[x_{j_{11}j_{12}}^{(r_1)}b_1,\dotsc,x_{j_{k1}j_{k2}}^{(r_k)}b_k] \\
= \lim_{n \to \infty} \sum_{\substack{\sigma \in NC_h(2k)\\ \sigma \leq \ker \mathbf j\\ \sigma \vee \hat 0_k = 1_{2k}}} \sum_{\substack{\pi \in NC_h(2k)\\ \pi \leq \sigma}} \mu(\pi,\sigma)n^{-|\pi|}\sum_{\substack{1 \leq i_{11},\dotsc,i_{k2} \leq n\\ \pi \leq \ker \mathbf i}} x_{i_{11}i_{12}}^{(r_1)}\dotsb x_{i_{k1}i_{k2}}^{(r_k)}b_k.
\end{multline*}
Now use Lemma \ref{h+parts} to replace $\sigma, \pi \in NC_h(2k)$ in the equation above by $\widetilde \sigma_1 \vee \widetilde \sigma_2$ and $\widetilde \pi_1 \vee \widetilde \pi_2$, where $\sigma_1 \leq \pi_1 \leq \pi_2 \leq \sigma_2$ are in $NC(k)$.  Note that the condition $\widetilde \sigma_1 \vee \widetilde \sigma_2 \vee \hat 0_k = 1_{2k}$ forces $\sigma_2 = 1_k$.  But then $\widetilde \sigma_2 \leq \ker \mathbf j$ forces the $R$-cyclicity condition with respect to $E$.

It remains only to show that the determining series $\Theta_X$ is invariant under quantum permutations.  From the previous paragraph, we have
\begin{multline*}
 \kappa_E^{(k)}[x_{j_{11}j_{12}}^{(r_1)}b_1,\dotsc,x_{j_{k1}j_{k2}}^{(r_k)}b_k] \\
= \lim_{n \to \infty} \sum_{\substack{\sigma \in NC(k)\\ \widetilde \sigma \vee \widetilde{1_k} \leq \ker \mathbf j}} \sum_{\substack{\pi_1,\pi_2 \in NC(k)\\ \sigma \leq \pi_1 \leq \pi_2}} \mu(\sigma,\pi_1)\mu(\pi_2,1_k)n^{-|\widetilde \pi_1 \vee \widetilde \pi_2|}\sum_{\substack{1 \leq i_{11},\dotsc,i_{k2} \leq n\\ \widetilde \pi_1 \vee \widetilde \pi_2 \leq \ker \mathbf i}} x_{i_{11}i_{12}}^{(r_1)}b_1\dotsb x_{i_{k1}i_{k2}}^{(r_k)}b_k.
\end{multline*}

Now we would like to define
\begin{equation*}
c_{\sigma,\mathbf r}[b_1,\dotsc,b_k] = \lim_{n \to \infty}\sum_{\substack{\pi_1,\pi_2 \in NC(k)\\ \sigma \leq \pi_1 \leq \pi_2}} \mu(\sigma,\pi_1)\mu(\pi_2,1_k)n^{-|\widetilde \pi_1 \vee \widetilde \pi_2|}\sum_{\substack{1 \leq i_{11},\dotsc,i_{k2} \leq n\\ \widetilde \pi_1 \vee \widetilde \pi_2 \leq \ker \mathbf i}} x_{i_{11}i_{12}}^{(r_1)}b_1\dotsb x_{i_{k1}i_{k2}}^{(r_k)}b_k,
\end{equation*}
and the result would follow from Lemma \ref{detserinv}.  However we must check that the right hand side converges.  Let $c^n_{\sigma,\mathbf r}[b_1,\dotsc,b_k]$ denote the right hand side, then from the above paragraph we know that for any $\tau \in NC(k)$,
\begin{equation*}
 \sum_{\substack{\pi \in NC(k)\\ \pi \leq \tau}} c^n_{\pi,\mathbf r}[b_1,\dotsc,b_k] 
\end{equation*}
converges to $\kappa_E^{(k)}[x_{j_{11}j_{12}}^{(r_1)}b_1,\dotsc,x_{j_{k1}j_{k2}}^{(r_k)}b_k]$ for any $j_{11},\dotsc,j_{k2}$ such that $\ker \mathbf j = \widetilde \tau \vee \widetilde 1_k$.  But then
\begin{equation*}
c^n_{\sigma,\mathbf r}[b_1,\dotsc,b_k] = \sum_{\substack{\tau \in NC(k)\\ \tau \leq \sigma}} \mu(\tau,\sigma) \sum_{\substack{\pi \in NC(k)\\ \pi \leq \tau}} c^n_{\pi,\mathbf r}[b_1,\dotsc,b_k]
\end{equation*}
converges as well, which completes the proof.
\end{proof}

\section{Concluding remarks}\label{sec:conclusion}

In this paper we have used the framework of ``free'' quantum groups from \cite{bsp} to study families of infinite matrices of random variables whose joint distribution is invariant under conjugation by a compact orthogonal quantum group.  In particular, we have given complete characterizations of the families which are invariant under conjugation by $O_n^+$ or $H_n^+$.  

A remaining question is to better understand the structure of matrices which are invariant under conjugation by $S_n^+$ (or $B_n^+$).  As mentioned in the introduction, one surprise here is that self-adjoint matrices with freely independent and identically distributed entries above the diagonal are not necessarily $S_n^+$-invariant, as we now show.

\begin{proposition}
Let $(x_{ij})_{1 \leq i \leq j \leq n}$ be a family of freely independent $(0,1)$-semicircular random variables in a noncommutative probability space $(\mc A, \varphi)$, and let $x_{ij} = x_{ji}$ for $i > j$.  Let $X = (x_{ij})_{1 \leq i,j \leq n}$ in $M_n(\mc A)$.  If $n \geq 4$, then $X$ is not $S_n^+$-invariant.
\end{proposition}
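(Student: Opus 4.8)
The plan is to invoke the combinatorial description of $S_n^+$-invariance from Theorem \ref{easyinv} (here $s=1$ and, for $S_n^+$, $D(2k)=NC(2k)$) and to exhibit a single second-order moment that cannot be written in the required form. The degree $k=2$ case already suffices, so I would argue by contradiction: assuming $X$ is $S_n^+$-invariant, Theorem \ref{easyinv} supplies constants $\{c_\pi : \pi \in NC(4)\}$ with
\[
\varphi(x_{i_{11}i_{12}}x_{i_{21}i_{22}}) = \sum_{\substack{\pi \in NC(4)\\ \pi \le \ker \mathbf i}} c_\pi
\]
for all $1 \le i_{11},i_{12},i_{21},i_{22} \le n$.

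First I would compute the left-hand side. Since the entries are free $(0,1)$-semicircular elements indexed by unordered pairs $\{i,j\}$ (with $x_{ij}=x_{ji}$), only the second free cumulant is nonzero, giving $\varphi(x_{ab}x_{cd}) = \delta_{\{a,b\},\{c,d\}}$. In particular this depends only on $\ker(a,b,c,d)$, equalling $1$ exactly on the three kernels $1_4$, $\{\{1,4\},\{2,3\}\}$ and $\{\{1,3\},\{2,4\}\}$, and $0$ otherwise.

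The crux is the \emph{crossing} partition $\rho_0 = \{\{1,3\},\{2,4\}\}$, which encodes the symmetry $x_{12}=x_{21}$. Because $n \ge 4$, every partition of $\{1,2,3,4\}$ arises as some $\ker \mathbf i$, so I may evaluate the displayed identity at the kernels $0_4$, $\{\{1,3\},\{2\},\{4\}\}$ and $\{\{2,4\},\{1\},\{3\}\}$, on all of which the left-hand side vanishes. A short triangularity (Möbius) argument over the realizable kernels then forces $c_{0_4} = c_{\{\{1,3\},\{2\},\{4\}\}} = c_{\{\{1\},\{3\},\{2,4\}\}} = 0$. These are precisely the noncrossing partitions lying below $\rho_0$; hence evaluating at $\mathbf i = (1,2,1,2)$, for which $\ker \mathbf i = \rho_0$, gives right-hand side $0$, whereas the left-hand side equals $\varphi(x_{12}^2)=1$, a contradiction.

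The main obstacle is conceptual rather than computational: one must recognize that the constraint $x_{ij}=x_{ji}$ forces a contribution indexed by the crossing kernel $\rho_0$, and then verify that every noncrossing refinement of $\rho_0$ carries zero coefficient, so that the noncrossing span coming from $S_n^+$ simply cannot reproduce the moment. The role of the hypothesis $n \ge 4$ is exactly to guarantee that $0_4$ (and hence all the auxiliary kernels) are realizable, which is what pins down the vanishing of those coefficients.
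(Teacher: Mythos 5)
Your proof is correct, but it follows a different route from the paper's. The paper computes the same second moment, writes the corresponding vector in $(\C^n)^{\otimes 4}$ as $T_\pi + T_\sigma - T_\tau$ with $\pi=\{\{1,3\},\{2,4\}\}$ crossing and $\sigma,\tau$ noncrossing, notes that $T_\sigma,T_\tau\in Fix(u^{\otimes 4})$, and concludes that invariance would force $T_\pi\in Fix(u^{\otimes 4})$, which is false for $n\geq 4$ by Banica's theorem \cite{ban}. You instead invoke the ``only if'' direction of Theorem \ref{easyinv} and kill the putative coefficients $c_{0_4}$, $c_{\{\{1,3\},\{2\},\{4\}\}}$, $c_{\{\{1\},\{3\},\{2,4\}\}}$ by evaluating at index tuples realizing those kernels (e.g. $(1,2,3,4)$, $(1,2,1,3)$, $(1,2,3,2)$), and then evaluate at $(1,2,1,2)$ to get $0=1$. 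This is a valid and in fact more self-contained argument: it replaces the citation of \cite{ban} with an explicit triangularity computation showing by hand that the moment function is not in the span of $\{T_\rho:\rho\in NC(4)\}$; the paper's version is shorter but outsources exactly that non-membership. One small caveat on your closing remark: the hypothesis $n\geq 4$ is not only needed to realize the kernel $0_4$; it is also what makes Theorem \ref{easyinv} applicable with $D(2k)=NC(2k)$ (for $n\leq 3$ one has $S_n^+=S_n$, crossing partitions also give fixed vectors, and the matrix \emph{is} invariant), so the hypothesis enters your argument at two points rather than one.
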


\begin{proof}
Let $\pi,\sigma,\tau \in \mc P(4)$ be the partitions
\begin{center}
\begin{pspicture}(-1.5,0)(4,2)
{\psset{xunit=1cm,yunit=.6cm,linewidth=.5pt}
\psline(0,2)(0,0)\psline(0,0)(2,0)\psline(2,0)(2,2)
\psline(1,2)(1,1)\psline(1,1)(1.9,1)\psline(2.1,1)(3,1)\psline(3,1)(3,2)
\uput[u](0,2){1}\uput[u](1,2){2}\uput[u](2,2){3}
\uput[u](3,2){4} \uput[u](-.6,1){$\pi=$}}
\end{pspicture}
\qquad
\begin{pspicture}(0,0)(4,2)
{\psset{xunit=1cm,yunit=.6cm,linewidth=.5pt} 
\psline(0,2)(0,0)\psline(0,0)(3,0)\psline(3,0)(3,2)
\psline(1,2)(1,1)\psline(1,1)(2,1)\psline(2,1)(2,2)
\uput[u](0,2){1}\uput[u](1,2){2}\uput[u](2,2){3}
\uput[u](3,2){4} \uput[u](-.6,1){$\sigma=$}}
\end{pspicture}
\qquad
\begin{pspicture}(0,0)(4,2)
{\psset{xunit=1cm,yunit=.6cm,linewidth=.5pt} 
\psline(0,2)(0,0)\psline(0,0)(3,0)\psline(3,0)(3,2)
\psline(1,0)(1,2)\psline(2,0)(2,2)
\uput[u](0,2){1}\uput[u](1,2){2}\uput[u](2,2){3}
\uput[u](3,2){4} \uput[u](-.6,1){$\tau=$}}
\end{pspicture}
\end{center}
Observe that $\varphi(x_{i_1i_2}x_{i_3i_4}) = \delta_{\pi}(\mathbf i) + \delta_{\sigma}(\mathbf i) - \delta_{\tau}(\mathbf i)$.

Suppose that $X$ were $S_n^+$-invariant, then we would have the equality
\begin{equation*}
\sum_{1 \leq i_1,i_2,i_3,i_4 \leq n} \varphi(x_{i_1i_2}x_{i_3i_4})u_{i_1j_1}u_{i_2j_2}u_{i_3j_3}u_{i_4j_4} = \varphi(x_{j_1j_2}x_{j_3j_4})\cdot 1_{C_(S_n^+)}
\end{equation*}
for any $1 \leq j_1,\dotsc, j_4 \leq n$.  In other words,
\begin{equation*}
 \sum_{1 \leq i_1,i_2,i_3,i_4 \leq n} (\delta_{\pi}(\mathbf i) + \delta_{\sigma}(\mathbf i) - \delta_{\tau}(\mathbf i)) u_{i_1j_1}u_{i_2j_2}u_{i_3j_3}u_{i_4j_4} = (\delta_{\pi}(\mathbf j) + \delta_{\sigma}(\mathbf j) - \delta_{\tau}(\mathbf j))\cdot 1_{C(S_n^+)}.
\end{equation*}

Recall from Section \ref{sec:easy} that associated to any $\nu \in \mc P(4)$ there is the vector
\begin{equation*}
 T_{\nu} = \sum_{1 \leq i_1,i_2,i_3,i_4 \leq n} \delta_{\nu}(\mathbf i) e_{i_1} \otimes e_{i_2} \otimes e_{i_3} \otimes e_{i_4} \in (\C^n)^{\otimes 4}.
\end{equation*}
Let $\Psi:C(S_n^+) \to C(S_n^+)$ be the automorphism $\Psi(f) = S(f)^*$.  Then we have
\begin{equation*}
 (\mathrm{id} \otimes \Psi)u^{\otimes 4}(T_\nu) = \negthickspace\negthickspace \sum_{1 \leq j_1,\dotsc,j_4 \leq n} e_{j_1} \otimes e_{j_2} \otimes e_{j_3} \otimes e_{j_4} \otimes \bigl(\sum_{1 \leq i_1,i_2,i_3,i_4 \leq n}\delta_{\nu}(\mathbf i) u_{i_1j_1}u_{i_2j_2}u_{i_3j_3}u_{i_4j_4}\bigr)
\end{equation*}
Since $\Psi$ is an automorphism we have
\begin{equation*}
 T_{\nu} \in Fix(u^{\otimes 4}) \Leftrightarrow \sum_{1 \leq i_1,i_2,i_3,i_4 \leq n} \delta_{\nu}(\mathbf i) u_{i_1j_1}u_{i_2j_2}u_{i_3j_3}u_{i_4j_4} = \delta_{\nu}(\mathbf j) \cdot 1_{C(S_n^+)}.
\end{equation*}
Now since $\sigma$ and $\tau$ are non-crossing, we know from Section \ref{sec:easy} that $T_\sigma$ and $T_\tau$ are in $Fix(u^{\otimes 4})$.  It then follows from the equation above that also $T_\pi \in Fix(u^{\otimes 4})$, which is known from \cite{ban} to be false.
\end{proof}
We remark that one may easily modify the proof to show that if $(x_{ij})_{1 \leq i,j\leq n}$ are freely independent $(0,1)$-semicircular random variables then the (non self-adjoint) matrix $X = (x_{ij})_{1 \leq i,j \leq n}$ is also not $S^+$-invariant.  

Let us consider now an infinite self-adjoint $O^+$-invariant matrix $X = (x_{ij})_{1 \leq i,j \leq n}$.  Assume for simplicity that the $O^+$-invariant subalgebra (see \ref{sec:infinvar}) is equal to $\C$ (in the classical setting such a matrix is called \textit{dissociated}).  By Theorem 1, $X$ is uniformly $R$-cyclic, so that the joint distribution of $(x_{ij})_{i,j \in \N}$ is determined by that of $x_{11}$.  By Theorem 2 the distribution of $x_{11}$ is freely infinitely divisible.  It follows that the distribution of $x_{11}$ is the weak limit as $k \to \infty$ of compound Poisson distributions of the form $s^{(k)}a^{(k)}s^{(k)}$, where $s^{(k)}$ is a centered semicircular random variable which is freely independent from $a^{(k)}$ (see e.g. \cite{ns}).  Therefore the joint distribution of $(x_{ij})_{i,j \in \N}$ is the weak limit as $k \to \infty$ of the joint distribution of $(s_i^{(k)}a^{(k)}s_j^{(k)})_{i,j \in \N}$, where for each $k$, $(s_i^{(k)})_{i \in \N}$ is a sequence of freely independent centered semicircular random variables, with the same variance as $s^{(k)}$, which is freely independent from $a^{(k)}$.  Since free and identically distributed centered semicircular sequences are characterized by $O^+$-invariance (\cite{cur4},\cite{bcs2}), we find that (dissociated) self-adjoint $O^+$-invariant matrices can be obtained as limits of products of $O^+$-invariance sequences.

In general, if $(y_i)_{i \in \N}$ is an infinite $G$-invariant sequence of self-adjoint random variables, where $G$ is one of $O^+,S^+,H^+$ or $B^+$, and $a$ is freely independent from $\{y_i:i \in \N\}$, then one can show that $X = (x_{ij})_{i,j \in \N}$, $x_{ij} = y_iay_j$, is a self-adjoint $G$-invariant matrix.  In view of the free orthogonal case discussed above, it is tempting to conjecture that any self-adjoint $G^+$-invariant matrix may be obtained as a weak limit of matrices of this form.   However this does not seem to be clear, even in the case $G = H^+$.

One may also consider $k$-dimensional $G$-invariant arrays $X = (x_{i_1,\dotsc,i_k})_{i_1,\dotsc,i_k \in \N}$, $k \geq 3$.  In the classical setting, for $G = O$ or $S$, Kallenberg has given a uniform treatment for all values of $k$, see \cite{kal}.  In the free setting it is not clear how to deal with the case $k \geq 3$, as our characterization uses heavily the matricial structure for $k = 2$.  However, in light of the discussion above one might suspect that any infinite $G$-invariant array might be obtained from products of a $G$-invariant sequences, at least in case $G = O^+$. 

Let us point out a relation between Theorem 1 and our recent paper \cite{cs1}.  Our main result there is a statement of asymptotic freeness between constant operator-valued matrices and free unitary or orthogonal matrices.  In particular we show that if $A_N$ and $B_N$ are constant operator-valued $N \times N$ matrices with limiting distributions as $N \to \infty$ and, for each $N$, $U_N$ is a Haar-distributed free orthogonal $N\times N$ matrix, then $U_NA_NU_N^*$ and $B_N$ are asymptotically free with amalgamation as $N \to \infty$.  Now suppose that $X = (x_{ij})_{i,j \in \N}$ is an infinite self-adjoint $O^+$-invariant matrix, and for each $N$ let $X_N = (x_{ij})_{1 \leq i,j \leq N}$.  Let $B_N$ be a sequence of matrices in $M_N(\mc B)$ which has a limiting $\mc B$-valued distribution as $N \to \infty$.  Since the entries of $U_NX_NU_N^*$ have the same joint distribution as the entries of $X_N$ by assumption, our result suggests that $X_N$ and $B_N$ should be asymptotically free with amalgamation over $\mc B$. By Theorem \ref{uniformrcyclictheorem}, this would suggest that $X_N$ should be ``asymptotically'' uniformly $R$-cyclic with respect to the expectation onto $\mc B$.  But the definition of uniform $R$-cyclicity in terms of $\mc B$-valued cumulants would suggest that $X_N$ should then be uniformly $R$-cyclic for each $N \in \N$, which is the content of Theorem 1.  Of course there are many difficulties in making such an argument rigorous.  But let us remark that one may indeed adapt the methods from \cite{cs1} to give another proof of Theorem 1, by showing that if $X_1,\dotsc,X_s$ is a self-adjoint $O^+$-invariant family of infinite matrices with entries in $(M,\varphi)$, then for each $N$ the family $\{X_1^{(N)},\dotsc,X_s^{(N)}\} \subset M_N(M)$ is free from $M_N(\mc B)$ with amalgamation over $\mc B$.  We have chosen instead to work with the $\mc B$-valued cumulants of the entries in this paper, as it allows a more uniform treatment for free quantum groups.

Finally, let us discuss the situation for \textit{separately invariant} matrices, i.e. matrices $X = (x_{ij})_{1 \leq i,j \leq n}$ which are invariant under multiplication on the left or right by matrices in the compact orthogonal (quantum) group $G$.  The Aldous-Hoover characterization of jointly exchangeable arrays also holds, with slight modifications, for separately exchangeable arrays \cite{ald},\cite{hoover}.  While the notion of separate invariance makes perfect sense for free quantum groups, it does not appear to lead to interesting results in free probability.  Indeed if $(x_i)_{i \in \N}$ and $(y_i)_{i \in \N}$ are both free and identically distributed sequences, but which are stochastically independent from each other, then the matrix $X = (x_{ij})_{i,j \in \N}$, $x_{ij} = x_iy_j$, is separately $S^+$-invariant.  This is somewhat reminiscent of the situation for exchangeable sequences of noncommutative random variables, see \cite{kos}.  

\def\cprime{$'$}

\end{document}